\DeclareSymbolFont{bbold}{U}{bbold}{m}{n}
\DeclareSymbolFontAlphabet{\mathbbold}{bbold}
\theoremstyle{definition}
\theoremstyle{remark}
\numberwithin{equation}{section}
\def\C{\mathbb{C}}
\def\D{\mathbb{D}}
\def\Q{\mathbb{Q}}
\def\R{\mathbb{R}}
\def\Sb{\mathbb{S}}
\def\T{\mathbb{T}}
\def\Z{\mathbb{Z}}
\def\1{\mathbbold{1}}
\def\cb{\mathbf{c}}
\def\Ac{\mathcal{A}}
\def\Fc{\mathcal{F}}
\def\Gc{\mathcal{G}}
\def\Lc{\mathcal{L}}
\def\Oc{\mathcal{O}}
\def\Pc{\mathcal{P}}
\def\Sc{\mathcal{S}}
\def\Uc{\mathcal{U}}
\def\Vc{\mathcal{V}}
\def\lf{\mathfrak{l}}
\def\RP{\mathbb{R}P}
\def\al{\alpha}
\def\ga{\gamma}
\def\la{\lambda}
\def\d{\partial}
\def\0{\varnothing}
\def\sm{\setminus}
\def\ol{\overline}
\def\le{\leqslant}
\def\ge{\geqslant}
\def\<{\langle}
\def\>{\rangle}
\def\ker{\mathrm{ker}}
\def\Hom{\mathrm{Hom}}
\def\vol{\mathrm{vol}}
\def\Len{\mathrm{Len}}
\def\area{\mathrm{area}}
\def\supp{\mathrm{supp}} 
\def\St{\mathrm{St}}
\def\supp{\mathrm{supp}}
\def\Hs{\mathsf{H}}
\def\Gs{\mathsf{G}}
\def\Ts{\mathsf{T}}
\theoremstyle{Theorem}
\newtheorem*{lem*}{Lemma}
\newtheorem*{thm*}{Theorem}
\newtheorem*{cor*}{Corollary}
\begin{document}

\title[Valuations and arrangements]
{Valuations on polyhedra and topological arrangements}

\author[A.~Khovanski\u{i}]{Askold Khovanski\u{i}}

\author[V.~Kiritchenko]{Valentina Kiritchenko}

\author[V.~Timorin]{Vladlen Timorin}
\thanks{Results of the project ``Symmetry. Information. Chaos'', carried out within the framework of the Basic Research Program at HSE University,
 are presented in this work.}

\begin{abstract}
We revisit a classical theme of (general or translation invariant) valuations on convex polyhedra.
Our setting generalizes the classical one, in a ``dual'' direction to previously considered
 generalizations: while previous research was mostly concerned with variations of ground fields/rings,
 over which the vertices of polytopes are defined, we consider more general collections of defining hyperplanes.
No algebraic structures are imposed on these collections.
This setting allows us to uncover a close relationship between scissors congruence problems
 on the one hand and finite hyperplane arrangements on the other hand:
 there are many parallel results in these fields, for which the
 parallelism seems to have gone unnoticed.
In particular, certain properties of the Varchenko--Gelfand algebras for arrangements
 translate to properties of polytope rings or valuations.
Studying these properties is possible in a general topological setting,
 that is, in the context of the so-called topological arrangements, where hyperplanes
 do not have to be straight and may even have nontrivial topology.
\end{abstract}

\maketitle

\section{Historic overview}
\label{s:over}
This overview is aimed at non-experts; please refer to the next section for a brief description of results.

\textbf{Scissors congruence problems} go back at least to the times of Euclid and Archimedes; they have rich history.
Recall that two polygons in the plane are \emph{scissors congruent} if one can be cut into
 finitely many polygonal pieces, of which the other can be re-assembled.
Implicit in this definition is a certain group $\Gs$ of Euclidean isometries:
 only transformations from $\Gs$ are allowed when moving the pieces around.
Depending on the choice of $\Gs$, one obtains various scissors congruence problems
 called \emph{$\Gs$-scissors congruence problems}.
For example, when $\Gs$ is the full group of plane Euclidean isometries,
 two planar polygons are $\Gs$-scissors congruent if and only if they have the same area.
This result is very classical and is known as the \emph{Wallace--Bolyai--Gerwien theorem};
 the name acknowledges the independent explicit proofs of the early 19th century,
 while implicitly, the statement traces back to Euclid.
In fact, the same result holds true when $\Gs$ is replaced with the much smaller subgroup
 consisting only of parallel translations and half-turn rotations, as Hadwiger and Glur showed in \cite{HG51}.

Even in this simplest context, it is clear that certain abstract properties of the area play a decisive role,
 namely, additivity and $\Gs$-invariance.
Additivity property is formalized by the notion of a valuation.
Given a class $\Pc$ of convex polygons such that $\0\in\Pc$ and that is stable
 under taking intersections, define a \emph{valuation on $\Pc$} as a function $\mu$
 on $\Pc$ (with values, say, in $\R$ or in $\Q$) such that $\mu(\0)=0$
 and $\mu(P\cup Q)=\mu(P)+\mu(Q)-\mu(P\cap Q)$ whenever $P$, $Q$, and $P\cup Q\in\Pc$.
Of course, this notion extends verbatim to higher dimensional polyhedra.
Say that $\mu$ is \emph{$\Gs$-invariant} if $\mu(gP)=\mu(P)$ for all $g\in\Gs$.
For example, the Euclidean area is a valuation on bounded polygons
 that is invariant under the full group of Euclidean isometries.
Another important property of a valuation is being \emph{simple}:
 $\mu$ is said to be simple if it vanishes on degenerate polygons, i.e.,
 on line segments and on points.
Higher dimensional \emph{simple valuations} are defined as valuations vanishing on
 polyhedra of positive codimension.
If a polygon is cut into pieces, then the value of a simple valuation on the polygon
 can be computed as the sum of the corresponding values on all the pieces
 -- ignoring the fact that pieces may have nonempty intersections, since these intersections have positive codimension.
Next, when the pieces are rearranged using elements of $\Gs$, then $\Gs$-invariance
 guarantees that the value of $\mu$ does not change.
Thus, $\Gs$-invariant simple valuations always take the same value on all pairs
 of $\Gs$-scissors congruent polygons.

As follows from Zorn's lemma, there are many $\Q$-linear maps $\xi:\R\to\R$ that
 are not $\R$-linear; such maps are called \emph{additive functions} since
 they are characterized by the functional equation $f(x+y)=f(x)+f(y)$,
 which should hold for all $x$, $y\in\R$.
Every valuation $\mu$ with values in $\R$ gives rise to uncountably
 many valuations of the form $\xi\circ\mu$,
 where $\xi:\R\to\R$ is an additive function.
Instead of considering all $\xi\circ\mu$ along with $\mu$,
 one can confine oneself with valuations with values in $\Q$,
 having in mind that a valuation in any $\Q$-vector space reduces
 (by a choice of a basis) to a family of $\Q$-valued valuations.
Area can also be viewed in this way, as uncountably many $\Q$-valued valuations,
 since $\R$ is a vector space over $\Q$.

\textbf{Hilbert's third problem} asked whether an analog of the Wallace--Bolyai--Gerwien theorem holds in dimension 3,
 that is, whether any two convex 3-polyhedra of the same volume are scissors congruent
 (where the group of allowable transformations is the full isometry group of $\R^3$).
As was already conjectured by Hilbert, the answer is no, and the reason for this
 negative answer lies in the existence of many simple invariant valuations independent from the volume.
These were described by Dehn \cite{Deh01} and are known as the \emph{Dehn invariants}; see e.g. \cite{Sah79}.
Somewhat similar invariants can be found already in dimension 2, if one restricts $\Gs$ to
 the group $\Ts$ of all translations.
Namely, fix a cooriented line $L\subset \R^2$, where the \emph{coorientation} of $L$
 means a choice of a component of $\R^2\sm L$ that is called the \emph{positive side of} $L$
 (the other component is then called the \emph{negative side}).
Given a compact convex polygon $P$, define the number $\Upsilon_L(P)$ as the difference
 $\Len(P\cap L_+)-\Len(P\cap L_-)$, where $L_\pm$ are the two support lines of $P$
 parallel to $L$, and the sign is plus or minus depending on whether $P$ lies
 on the positive or on the negative side of the chosen support line.
By $\Len(I)$, we denote the Euclidean length of a compact interval $I$.
Valuations $\Upsilon_L$ in 2D were introduced by Hadwiger and Glur in their 1951 paper \cite{HG51}.

A principal result of \cite{HG51} is that two convex bounded polygons in the plane
 are $\Ts$-scissors congruent if and only if they have the same area and the
 same values of $\Upsilon_L$, for all $L$.
For example, a triangle and a square of the same area are not $\Ts$-scissors congruent.
On the other hand, all parallelepipeds of the same area are $\Ts$-scissors congruent
 since the valuations $\Upsilon_L$ vanish on them, for all $L$.
In particular, it is possible to ``rotate'' a square by an arbitrary angle by cutting
 it and translating the pieces (no rotations of pieces are allowed in the process).
Even though one needs infinite family of valuations $\Upsilon_L$ (parameterized by all lines)
 to distinguish every pair of polygons that are not $\Ts$-scissors congruent,
 one only needs finitely many valuations to decide whether two given polygons
 are $\Ts$-scissors congruent (it suffices to take the finitely many lines containing
 all the non-degenerate edges of both polygons).
By methods of abstract linear algebra, it can be deduced from \cite{HG51} that
 every simple $\Ts$-invariant valuation on planar polygons is a (possibly infinite)
 linear combination of $(\xi\circ\area)$ and $\xi\circ\Upsilon_L$,
 where $L$ ranges through all cooriented lines containing a given point (say, the origin),
 and $\xi:\R\to\Q$ ranges through all additive functions on $\R$ with values in $\Q$.

Results of \cite{HG51} were later generalized to real affine spaces of arbitrary dimension; see \cite{JT78,Sah79,KP16}.
Instead of $\Upsilon_L$, one considers the multidimensional \emph{Hadwiger invariants} $\Upsilon_\Lc$
 associated with oriented flags $\Lc$ in $\R^n$.
By a flag, we mean a finite collection of affine subspaces linearly ordered by inclusion.
Hadwiger invariants are based on flags $\Lc$ of a special types, namely, $\Lc$ should
 consist of subspaces $L^0\supset\dots \supset L^k$, where $L^i$ has codimension $i$ in $\R^n$,
 and $i$ takes all integer values between $0$ and $k$ inclusive.
Define the \emph{orientation} of $\Lc$ as the coorientation of $L^k$ in $\R^n$.
Oriented flags of this type form the set $\Lambda^\Oc_k$.
Given any $\Lc\in\Lambda^\Oc_k$ and any convex polytope $P\subset \R^n$ of dimension $n$,
 the value $\Upsilon_\Lc(P)$ is defined as $\pm\vol_{n-k}(P\cap L^k)$ if $P\cap L^i$ is a
 face of $P$ of codimension $i$, for all $i=0$, $\dots$, $k$, and as 0 otherwise.
Here, $\vol_{n-k}$ denotes the volume in dimension $n-k$ viewed as a
 measure on every affine subspace of codimension $k$, and the sign is plus
 if and only if the orientation of $\Lc$ defined by $P$ coincides with the given orientation of $\Lc$;
 otherwise, the sign is minus.
More precisely, assuming that $\Upsilon_\Lc(P)\ne 0$, consider vectors $v_{k-1}$, $\dots$, $v_{0}$,
 where each $v_i$ connects the barycenter of $P\cap L^{i+1}$ with the barycenter of $P\cap L^{i}$;
 these vectors form a basis in an affine subspace complementary to $L^k$ and thus define a coorientation of $L^k$;
 this is how $P$ defines an orientation of $\Lc$.
The space of all simple $\Ts$-invariant valuations on convex polytopes in $\R^n$ is
 explicitly described in \cite{KP16}: every such valuation is a (possibly infinite)
 linear combination of valuations of the form $\xi\circ\Upsilon_\Lc$, where $\Lc\in\Lambda^\Oc_k$,
 and $\xi:\R\to\Q$ is additive.
All linear relations between these valuations are also explicitly described.

$\Ts$-scissors congruence problem in $\R^n$, therefore, has a complete solution.
One can deduce from the above that two convex polytopes $P$ and $Q$ in $\R^n$
 are $\Ts$-scissors congruent if and only if all the Hadwiger invariants take the same value on $P$ and $Q$.
Note that the volume functional is among the Hadwiger invariants,
 namely, it corresponds to the flag $\{L^0=\R^n\}$.
For noncommutative groups $\Gs$, in particular, when $\Gs$ is the full group of
 Euclidean isometries, $\Gs$-scissors congruence problems remain open.
In particular, a full description of Euclidean scissors congruence invariants
 is still missing in dimensions $> 4$; the corresponding problems in hyperbolic
 and spherical spaces are open even in dimension 3.
Dehn invariants have been generalized to higher dimensional Euclidean spaces; however,
 in contrast to the theorem of Sydler that, in $\R^3$, two convex polytopes
 are scissors congruent if and only if they have the same volume and the
 same Dehn invariant, no extension of this fact is currently known for $\R^n$ with $n>4$
 (the case $n=4$ is covered by the result \cite{Jes68} of Jessen).
Further perspectives on scissors congruence problems are based on
 deep connections with algebraic K-theory, homological algebra, and algebraic geometry,
 see \cite{Zak16} for an overview.

\textbf{Rational convex polytopes} and valuations on them form an interesting subfield
 due to its connections with algebraic geometry and representation theory, specifically,
 with the theory of toric varieties.
Say that a polytope $P$ is \emph{rational} (resp., \emph{integer}) if all its
 vertices belong to $\Q^n\subset\R^n$ (resp., to $\Z^n\subset \R^n$).
In this context, a relevant group $\Gs$ is a lattice in the translation group $\Ts$, e.g., one can take
 all translations by vectors from $\Z^n\subset\R^n$; we write $\Gs=\Z^n$ in this case.
\emph{Valuations on integer polytopes} can be defined as the restrictions
 to the set of all integer polytopes of valuations defined on the set of all rational polytopes
 (P. McMullen \cite{McM09} proved that an a priori much weaker definition is equivalent to this;
 the difficulty is that the class of integer polytopes is not stable under taking intersections).
More generally, one can consider polytopes in ordered fields or even ordered rings;
 algebraic (in particular, homological) approach to scissors congruences extends easily to this setting,
 with understanding that polytopes are certain unions of simiplices.
Previous research on $\Z^n$-invariant valuations on all or rational polytopes includes \cite{McM78,PKh92a,McM93,Mor93};
 some of these works consider more general ground rings.
In \cite{Had13}, Hadwiger describes $\Z^n$-invariant valuations on all polytopes in $\R^n$.

\def\Scg{\mathrm{Sc}}

\textbf{The scissors congruence group} $\Scg(\Pc;\Gs)$ associated with a class $\Pc$
 of polytopes in $\R^n$ and a group $\Gs$ of isometries of $\R^n$ is an abelian group generated by
 members of $\Pc$ (given $P\in\Pc$, we write $[P]$ for the corresponding generator
 of $\Scg(\Pc;\Gs)$), subject to the following relations
\begin{itemize}
  \item 
  $[\0]=0$,
  \item 
  $[P\cup Q]=[P]+[Q]-[P\cap Q]$
 for $P$, $Q$, $P\cup Q\in\Pc$,
  \item 
  $[gP]=[P]$ for all $g\in\Gs$.
\end{itemize}
A further quotient of $\Scg(\Pc;\Gs)$ is obtained by imposing additional relations that
 $[P]=0$ whenever $\dim(P)<n$; this quotient is denoted by $\Scg^\circ(\Pc;\Gs)$ and is
 called a \emph{simple scissors congruence group}.
It is a classical result of Zylev \cite{Zyl65} that $P$, $Q\in\Pc$ are $\Gs$-scissors congruent if
 and only if $[P]=[Q]$ in $\Scg^\circ(\Pc;\Gs)$.
With this algebraic setup, one reduces a scissors congruence problem to
 a description of the relevant simple scissors congruence group.
Note that the space of all $\Q$-valued $\Gs$-invariant valuations on $\Pc$ is naturally isomorphic
 to the dual space $\Hom(\Scg(\Pc;\Gs);\Q)$, whereas the subspace of simple valuations
 is identified with $\Hom(\Scg^\circ(\Pc;\Gs);\Q)$.
For this reason, the problem of studying valuations and that of studying
  the corresponding scissors congruence group are essentially equivalent.
Most authors prefer the latter, since, in a practical sense,
 the scissors congruence groups are smaller than their duals.

Consider the map $P\mapsto [P]$ from $\Pc$ to $\Scg(\Pc;\Gs)$;
 this map can be viewed as a \emph{universal $\Gs$-invariant valuation}.
Universality here means that any $\Gs$-invariant valuation $\mu$ on $\Pc$
 with values, say, in some abelian group $A$, can be represented as $\mu(P)=\xi([P])$
 for a suitable group homomorphism $\xi:\Scg(\Pc;\Gs)\to A$.
P. McMullen showed in \cite{Mc89} that, if $\Pc$ is the set of all convex polytopes in $\R^n$
 and $\Ts$ is the group of all parallel translations, then, after a very minor modification,
 $\Scg(\Pc;\Ts)$ acquires a natural structure of a graded $\R$-algebra, the so called \emph{polytope algebra},
 in which the product is induced by Minkowski sum of polytopes.
The only modification needed is in degree 0.
Versions of McMullen's polytope algebra play key roles in intersection theory of toric varietes.
For example, as was shown by Fulton and Sturmfels in \cite{FS97}, \textbf{the ring of conditions
 of the complex torus} $(\C\sm\{0\})^n$ is naturally isomorphic to $\Scg(\Pc_\Z;\Z^n)$;
 the latter can be defined as the subgroup of $\Scg(\Pc_\Q;\Z^n)$ generated by integer polytopes.
Here, $\Pc_\Z$ is the set of all integer polytopes, and $\Pc_\Q$ is the set of all rational polytopes.
Our motivation for this paper was trying to better understand this isomorphism.

Algebraic approach to scissors congruence groups is based on the understanding that
 polytopes in $\R^n$ are unions of simplices.
This approach is nicely adapted to variations of the set where the vertices
 of polytopes are allowed to lie.
On the other hand, a dual approach is to fix some set $\Ac$ of affine hyperplanes in $\R^n$
 and to set $\Pc$ to consist only of those polyhedra that are bounded by hyperplanes from $\Ac$.
More precisely, for finite subsets $\Ac',\Ac''\subset \Ac$,
 consider the intersection of all hyperplanes from $\Ac'$ minus the union of all hyperplanes from $\Ac''$;
 the components of this set-theoretic difference are called \emph{$\Ac$-polyhedra}.
Bounded $\Ac$-polyhedra are called \emph{$\Ac$-polytopes}.
In this paper, we study scissors congruence problems for $\Ac$-polyhedra,
 and our approach is very different from the standard one, described above.
Write $\Pc_\Ac$ for the set of all $\Ac$-polytopes.

\textbf{Affine hyperplane arrangements} give an important special case of the setup just mentioned.
These are finite collections of affine hyperplanes in $\R^n$.
Given an affine arrangement $\Ac$, it makes sense to consider valuations on $\Pc_\Ac$,
 with $\Gs=\mathsf{0}$ being the trivial group, consisting only of the identity element.
In this case, the simple scissors congruence group identifies with the so called
 \emph{Varchenko--Gelfand algebra} \cite{VG87} of $\Ac$ over $\Q$ defined as the
 space of all $\Q$-valued functions on complementary components of $\Ac$.
Relations in the Varchenko--Gelfand algebra (more precisely, in the underlying rational vector space of it)
 are prototypical for relations in scissors congruence groups.
Even though elementary and almost tautological, this observation seems to have gone unnoticed.
Studying the group $\Scg^\circ(\Pc_\Ac;\mathsf{0})$ is possible by topological methods:
 it turns out that the relevant properties of this group are almost independent of
 the assumption that $\Ac$ consists of genuine hyperplanes.
For example, in dimension 2, the only properties of straight lines one needs
 are that every line is a simple curve properly embedded into $\R^2$,
 and that two distinct lines intersect transversely in a topological sense
 (one does not need any upper bound on the number of intersections).

The case $\Gs=\mathsf{0}$ is important since, for every other $\Gs$, the group $\Scg(\Pc;\Gs)$
 is a quotient of $\Scg(\Pc;\mathsf{0})$.
On the other hand, some other special choices of $\Gs$ admit nice topological interpretations.
For example, let $\Gs=\Z^n$ be the group of translations by integer vectors,
 and let $\hat\Ac$ be a set of rational (=defined over $\Q$) affine hyperplanes that is stable under the action of $\Z^n$.
Then $\hat\Ac$ defines a set $\Ac$ of codimension one subtori in the real torus $\R^n/\Z^n$;
 these subtori are geodesic hyperplanes with respect to the standard Euclidean structure on the torus.
If $\Ac$ is finite, then it is called a \emph{toric arrangement}.
Under the additional assumption that $\Ac$ contains at least one $n$-tuple of hyperplanes
 with a singleton intersection, one can consider polytopes in $\R^n/\Z^n$ bounded by
 hyperplanes of $\Ac$; let $\Pc_\Ac$ denote this class of polytopes.
Studying $\Z^n$-invariant valuations on $\Pc_{\hat\Ac}$ is now equivalent to
 studying valuations on $\Pc_\Ac$ in the torus.
Here, flat tori and their geodesic hyperplanes can be replaced by much
 more general topological manifolds and their topological hypersurfaces.
This is the approach we adopt in this paper.

\section{Introduction and statement of results}
One of our major aims is a functorial description of \emph{simple $\Z^n$-valuations on rational polytopes}.
Consider a set $\hat\Ac$ of rational hyperplanes in $\R^n$ that is \emph{$\Z^n$-invariant}, i.e.,
 is stable under parallel translations by the vectors of the integer lattice $\Z^n$
 and such that $\hat\Ac$ includes an $n$-tuple whose intersection is a single point.
Set $\Pc=\Pc_{\hat\Ac}$ to be the class of all $\hat\Ac$-polytopes.
A valuation $\mu$ on $\Pc$ is
 \emph{$\Z^n$-invariant} (we also call $\mu$ a \emph{$\Z^n$-valuation})
 if $\mu(P+v)=\mu(P)$ whenever $P\in\Pc$ and $v\in\Z^n$.
Recall that $\mu$ is \emph{simple} if $\mu(P)=0$ whenever $\dim(P)<n$.
One important special case is when $\hat\Ac$ consists of \emph{all} rational hyperplanes in $\R^n$.
For this paper, the other extremity, when $\hat\Ac$ is finite, will be no less ---
 methodologically, even more --- important.

\subsection{Examples}
Volume (normalized so that a fundamental domain 
 of $\Z^n$ has volume 1) gives
 a principal example $\vol:\Pc\to\Q$ of a $\Z^n$-valuation.
In fact, this valuation is simple and is invariant under all rational, not only integer, translations:
 that is, $\vol$ is a $\Q^n$-valuation.
The cardinality of $P\cap\Z^n$ is another example of a $\Z^n$-valuation; this one is not simple.
Fix a nonzero linear functional $\xi:\Q^n\to\Q$ and let $P_\xi$ for $P\in\Pc$
 be the face of $P$ where $\xi$ takes its maximum value.
Set $\mu_\xi(P):=\vol_{d-1}(P_\xi)$, where $\vol_{k}$ stands for the volume in dimension $k$;
 this is also a $\Q^n$-valuation, which is not simple.
Linear combinations of $\vol$ and $\mu_\xi$ for various $\xi$ are also $\Q^n$-valuations.
A related class of $\Q^n$-valuations can be obtained by
 fixing some $Q\in\Pc$ and setting $\mu_Q(P):=\vol(P+Q)$, where $+$ in the right-hand side is the Minkowskii sum
 (the fact that $\mu_Q$ are indeed valuations is true and simple but
  not completely obvious, cf. \cite{MS83,Had13}).

\subsection{Variations of the setup}
As was explained in Section \ref{s:over}, varying $\Pc$ and $\Gs$, one obtains several interrelated contexts.
The following is a thematic index of previous works on similar topics:
\begin{description}
  \item[\cite{HG51,JT78,Sah79,KP16}] all polytopes, simple $\R^n$-valuations;
  \item[\cite{Mc89,PKh92a,McM93,Mor93,Dup01}] all polytopes, $\R^n$-valuations;
  \item[\cite{Dup82,PKh92a,Dup01}] all polytopes, all (or all simple) valuations;
  \item[\cite{McM78,PKh92a,McM93,Mor93}] integer or rational polytopes, $\Z^n$-valuations;
  \item[\cite{Had13}] all polytopes, $\Z^n$-valuations.
\end{description}
Papers \cite{JT78,Sah79,Mc89,Mor93} also contain generalizations of the indicated setups to
 ordered fields or even more general ground rings instead of $\R$ and $\Z$.

\def\Val{\mathrm{Val}}

\subsection{Generators and relations}
\label{ss:gens-rels}
Go back to the initial setup: $\hat\Ac$ is a $\Z^n$-invariant collection of rational hyperplanes in $\R^n$
 that includes an $n$-tuple with a singleton intersection, and $\Pc=\Pc_{\hat\Ac}$
 is the class of all $\hat\Ac$-polytopes.
The rational vector space $\Val=\Val(\hat\Ac;\Z^n)$ of all simple $\Z^n$-va\-lu\-ations on $\Pc$
 can be described in terms of generators and relations.
We allow for infinite linear combinations of generators: a sum $\sum_\la f(\la)\mu_\la$
 over a family of valuations $\mu_\la\in\Val$ makes sense if, for every $P\in\Pc$,
 there are only finitely many values of $\la$ with $\mu_\la(P)\ne 0$.
Say that the family $(\mu_\la)_\la$ \emph{generates} $\Val$ if every $\mu\in\Val$
 can be written as $\sum_\la f(\la)\mu_\la$ with some rational coefficients $f(\la)\in\Q$.
Also, a \emph{relation} between the generators $\mu_\la$ is an equality of the form $\sum_\la f(\la)\mu_\la=0$,
 where the sum may be infinite but, again, for every $P\in\Pc$, only finitely many terms take nonzero values on $P$.

\subsection{Functoriality}
\label{ss:functor}
We want to describe a system of generators and defining relations for $\Val$ that
 would depend \emph{functorially} on $\hat\Ac$.
Namely, for every $\hat\Ac$, we define a distinguished generating set $\Gc(\hat\Ac)$ of $\Val(\hat\Ac;\Z^n)$.
Every map $j$ from a set $\hat\Ac$ of hyperplanes to another such set $\hat\Ac'$
 such that $H$ and $j(H)$ are parallel, for all $H\in\hat\Ac$, while $j(H')$ is a $\Z^n$-translate of $j(H)$
 whenever $H'$ is a $\Z^n$-translate of $H$,
 induces a map $j_*:\Gc(\hat\Ac)\to \Gc(\hat\Ac')$ at the level of the generators.
Composition of maps should correspond to composition of induced maps, that is,
 $(j_1\circ j_2)_*=j_{1*}\circ j_{2*}$
 for any pair of maps $j_1:\Gc(\hat\Ac_1)\to \Gc(\hat\Ac_2)$ and $j_2:\Gc(\hat\Ac_2)\to\Gc(\hat\Ac_3)$.
If $\hat\Ac$ is in general position, it can be shown that $j_*$ extends to an additive
 group homomorphism from $\Val(\hat\Ac;\Z^n)$ to $\Val(\hat\Ac';\Z^n)$.
Functoriality is need, for example, when one wants to use the ``same'' generating
 set for different subgroups of translations.
In a sequel of this paper, the authors plan to study \emph{polynomial valuations},
 i.e., valuations $\mu$ such that, for every $P\in\Pc$, the function $v\mapsto \mu(P+v)$
 is a polynomial function of $v\in\R^n$ or $v\in\Z^n$.
For a description of polynomial valuations, a functorial choice of generators plays a decisive role.
As it turns out, a version of the classical system of Hadwiger invariants \cite{HG51} has the
 desired functorial property, see \ref{ss:flag-val}.
Functoriality of generators was first addressed by Varchenko \cite{Var87} in
 the context of finite affine hyperplane arrangements.

\subsection{A general paradigm}
\label{ss:parad}
A principal novelty of this paper lies not so much in results
(which generalize known results) but in the approach, allowing to uncover
 connections between different disciplines.
Our line of argument can be roughly expressed as follows:
$$
\xymatrix{
*+<5pt>[F.:<3pt>]{\txt{Valuations}}
& *+<5pt>[F.:<3pt>]{\txt{Hyperplane\\ arrangements}} \ar @{ =>} [l]
& *+<5pt>[F.:<3pt>]{\txt{Combinatorial\\ topology}}\ar @{ =>} [l]
}
$$
The main theorem stated below should rather be thought of as
 the main example of the techniques.
Variations of the main theorem with topological flavor are possible.
In case a collection $\Ac$ of affine hyperplanes in $\R^n$ is finite,
 the description of valuations on $\Ac$-polytopes is closely related to
 the description of the Varchenko--Gelfand algebra \cite{VG87} for $\Ac$.
When a $\Z^n$-invariant collection $\hat\Ac$ of rational hyperplanes is locally finite,
 we obtain a finite collection of codimension one subtori in the compact $n$-torus.
Studying $\Z^n$-valuations on $\hat\Ac$-polytopes is again related to
 studying a toric analog of the Varchenko--Gelfand algebra.
Moreover, these questions make sense in much more general topological situation,
 where hyperplanes and flats do not have to be straight and may even have nontrivial topology.

\subsection{Flags in the torus}
Any hyperplane from $\hat\Ac$ defines a codimension one subtorus in $\T^n:=\R^n/\Z^n$.
Denote the set of all such subtori by $\Ac$ and call the elements of $\Ac$ \emph{hyperplanes in $\T^n$}.
By a \emph{flat} of $\Ac$ (or an $\Ac$-flat),
 we mean a component of the intersection $\bigcap\Ac'$, where $\Ac'\subset\Ac$
 is a subfamily of $\Ac$.
Note that an intersection of hyperplanes may be disconnected;
 however, all flats are connected by definition.
In particular, all of $\T^n$ is a flat, as the intersection of the empty subfamily.
Also, each hyperplane of $\Ac$ is a flat; by our assumption on $\Ac$,
 there are flats of all dimensions from $0$ to $n$.
Define a \emph{flag} (of $\Ac$) as a set of flats that is linearly ordered by inclusion.
We will mostly consider flags of the form
$$
\Lc=(L^0\supset\dots\supset L^k),
$$
where $L^i$ is a flat of codimension $i$, and $i$ ranges through all integers from $0$ to $k$.
Write $[\Lc]$ for the smallest flat $L_k$ of the flag.
Let $\Lambda_k$ be the set of all flags $\Lc$ as above.

\subsection{Hadwiger invariants}
\label{ss:flag-val}
Consider an $\hat\Ac$-polytope $P$ and a chain of faces $F^0\supset\dots\supset F^k$,
 where $F^i$ is a codimension $i$ face of $P$, and $i$ ranges from 0 to $k$.
Letting $L^i$ be the affine span of $F^i$ projected to the torus $\T^n$,
 we obtain a flag $\Lc=(L^0\supset\dots\supset L^k)$ from $\Lambda_k$
 called a \emph{flag of $P$}.
Also, $P$ defines a coorientation of $L^k$ as follows.
Take a vector $v_i$ from the barycenter of $F^k$ to the barycenter of $F^i$, for all $i=0$, $\dots$, $k-1$.
In this way, one obtains a basis in some transversal of $L^k$, which defines a coorientation of $L^k$.
By an \emph{oriented flag}, we mean a flag $\Lc$ with a chosen
 orientation of $[\Lc]$.

Given an oriented flag $\Lc\in\Lambda_k$, 
 define $\Upsilon_\Lc(P)$ to be $\pm \vol_{n-k}(P\cap [\Lc])$ if $\Lc$ is a flag of $P$ obtained as above,
 the sign being plus or minus depending on whether the chosen coorientation of $[\Lc]$
 coincides with that induced from $P$.
If $\Lc$ is not a flag of $P$, one sets $\Upsilon_\Lc(P)=0$.
Note that $\Upsilon_\Lc$ is a simple $\Z^n$-invariant valuation on $\hat\Ac$-polytopes.
Call $\Upsilon_\Lc$ the \emph{Hadwiger invariant of rank $k$} associated with a flag $\Lc$ and
 a coorientation of $[\Lc]$.

\begin{thm*}
The Hadwiger invariants $\Upsilon_\Lc$ generate the space $\Val(\Pc_{\hat\Ac},\Z^n)$: every $\Z^n$-invariant simple valuation
 $\mu$ on $\hat\Ac$-polytopes can be written as $\mu_f=\sum_{\Lc} f(\Lc) \Upsilon_\Lc$
 with some rational coefficients $f(\Lc)\in\Q$.
\end{thm*}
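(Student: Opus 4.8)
The plan is to argue by induction on the dimension $n$. The base case $n=1$ is a direct computation: an additive, $\Z$-periodic, $\Q$-valued function on rational intervals decomposes uniquely as a multiple of length together with ``boundary jump'' terms, and these are exactly $\Upsilon_{\{\T^1\}}$ (length) and the rank-$1$ invariants $\Upsilon_{\bar p}$ attached to rational points.

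For the inductive step I attach to a simple $\Z^n$-valuation $\mu$ its family of \emph{jumps across hyperplanes}: for $H\in\hat\Ac$ with a coorientation, a simple valuation $\partial_H\mu$ on the $(n-1)$-dimensional polytopes lying in $H$, obtained by the standard device of evaluating $\mu$ on a thin prism erected over such a polytope on the positive side of $H$ and extracting the part that survives as the prism degenerates onto $H$. One first checks that $\partial_H\mu$ is a simple valuation on the trace arrangement $\hat\Ac|_H$ inside $H$, invariant under the rank-$(n-1)$ lattice of integer translations preserving $H$, so that the inductive hypothesis applies to it inside $H$; the rationality hypotheses on $\hat\Ac$ guarantee that $\hat\Ac|_H$ is again of the required type. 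The crucial computation --- one of the two delicate points --- is that $\partial_H\Upsilon_\Lc=\Upsilon_{\Lc\setminus\{L^0\}}$, with $\Lc\setminus\{L^0\}$ read as a flag inside $H$, when the codimension-$1$ flat $L^1$ of $\Lc$ equals $H$, and $\partial_H\Upsilon_\Lc=0$ for every other flag; the vanishing holds because a flag whose first flat is not $H$ either misses a thin prism over $H$ altogether or meets it only in faces whose $(n{-}k)$-volume tends to $0$ as the prism collapses.

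Granting this, write each $\partial_H\mu=\sum_{\Lc'}g_H(\Lc')\Upsilon_{\Lc'}$ by induction (the sum over flags $\Lc'$ of $\hat\Ac|_H$), turn every $\Lc'$ into the flag $(\R^n\supset\Lc')$ of $\hat\Ac$ --- whose codimension-$1$ flat is precisely $H$ --- and let $\mu'$ be the resulting locally finite combination of rank-$\ge1$ Hadwiger invariants, summed over all $H$. The formula for $\partial_H\Upsilon_\Lc$ then yields $\partial_H(\mu-\mu')=0$ for every $H$ at once: the flag produced from $\partial_H$-data carries $H$ as its first flat, so it is invisible to $\partial_{H_0}$ for $H_0\ne H$, and no interference between distinct hyperplanes occurs. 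It remains to identify a simple $\Z^n$-valuation $\nu$ with $\partial_H\nu=0$ for all $H$: I claim $\nu=c\cdot\vol$ for a unique $c\in\Q$ (rational because rational polytopes have rational volume in our normalization). Granting the claim, $\mu=\mu'+c\cdot\vol$ is the required combination $\sum_\Lc f(\Lc)\Upsilon_\Lc$, with $f(\{\T^n\})=c$ at rank $0$ and $f(\R^n\supset\Lc')=g_H(\Lc')$ below; the sum is locally finite since a given polytope has only finitely many flags among its chains of faces.

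The main obstacle is the rigidity claim $\ker\partial=\Q\cdot\vol$: a simple, translation-invariant valuation with no jump across any hyperplane of $\hat\Ac$ must be Lebesgue measure up to scale. The natural argument is a scissors-congruence one --- dissect an $\hat\Ac$-polytope by hyperplanes of $\hat\Ac$ and reassemble using integer translations, using the vanishing jumps to slide facets freely --- but one must ensure that $\hat\Ac$ supplies enough cutting hyperplanes and $\Z^n$ enough motions. This is cleanest after reducing, by a colimit over the sub-arrangements of $\hat\Ac$ that are finite modulo $\Z^n$, to finite arrangements of codimension-one subtori in $\T^n=\R^n/\Z^n$, where the reassembly is finite; that reduction is also where the functoriality of \S\ref{ss:functor} earns its keep, the generating set being indexed by flags, which transform functorially in $\hat\Ac$, so that the inductively produced coefficients $f$ glue coherently as the sub-arrangement grows.
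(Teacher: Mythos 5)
Your route --- induction on the dimension via hyperplane-derivative operators $\partial_H$, in the spirit of Hadwiger--Glur and Sah --- is genuinely different from the paper's argument (which works predually: valuations are functionals on the space $\Vc$ of convex chains, filtered by kernels of powers of the Leray operator $D$; each quotient $\Vc_k$ embeds into the space $\Fc_k$ of alternating functions on oriented $k$-flags, where the rank-$k$ Hadwiger invariants visibly span the whole dual, so one kills $\mu$ on $\Vc_{\le k}$ rank by rank until $k=n$). But as it stands your argument has a gap already at the definition of $\partial_H$. A thin prism over an $\hat\Ac|_H$-polytope $F\subset H$ is in general \emph{not} an $\hat\Ac$-polytope: its lateral facets lie in hyperplanes spanned by the facets of $F$ and the prism direction, and these need not belong to $\hat\Ac$ (for instance, if $\hat\Ac$ consists of the $\Z^n$-translates of the coordinate hyperplanes and of $\{x_1+\dots+x_n=0\}$, the lateral face over the trace of the latter on $H=\{x_1=0\}$ lies in $\{x_2+\dots+x_n=c\}$, which is not in $\hat\Ac$). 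Moreover the translates of $H$ inside $\hat\Ac$ form a discrete family, so there are no prisms of height tending to $0$; and since $\mu$ is merely a finitely additive $\Q$-valued function with no continuity or homogeneity hypothesis (only $\Z^n$-invariance), there is no device for ``extracting the part that survives as the prism degenerates''. The classical constructions of such derivatives all use either the full class of convex polytopes or the full translation group, which is exactly what the relative setting of the theorem forbids; until $\partial_H$ is actually constructed, the key identity $\partial_H\Upsilon_\Lc=\Upsilon_{\Lc\sm\{L^0\}}$ and the conclusion $\partial_H(\mu-\mu')=0$ have no meaning.

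The second gap is the rigidity claim $\ker\partial=\Q\cdot\vol$, which you flag as the main obstacle but do not prove, and which is not a side issue: it amounts to a relative scissors-congruence statement (a simple $\Z^n$-invariant valuation with vanishing boundary data is proportional to volume, i.e., equal volume plus trivial jump data forces $\Z^n$-equidecomposability using only cuts along $\hat\Ac$), and this carries essentially the same weight as the theorem being proved. The ``slide facets freely'' reassembly needs precisely the kind of control the paper's machinery is built to supply --- the identification of $\Vc_k$ inside $\Fc_k$, the integration maps, and the fact that the classes of the hyperplanes generate $\Hs^1(\T^n)$, which is where the hypothesis that $\hat\Ac$ contains an $n$-tuple with singleton intersection enters. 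By contrast, the paper's proof of the generation statement needs none of this rigidity: it only uses that the filtration terminates at $\Vc_{\le n}=\Vc$ and that any functional on $\Fc_k$ is a (possibly infinite) combination of rank-$k$ Hadwiger invariants. So while your plan could in principle be carried out, both the construction of $\partial_H$ in the restricted $\hat\Ac$/$\Z^n$ setting and the kernel computation would have to be supplied, and each is a substantial missing argument rather than a routine verification.
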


Related results are \cite{McM78,JT78,Sah79,Mc89,Mor93}; 
 the common special case of the above theorem
 and \cite{McM78,Mor93} is when $\hat\Ac$ is the collection of all rational hyperplanes.
Observe that $f(\Lc)$ depends not only on the geometric flag $\Lc$ but also
 on its orientation; one can make $f$ \emph{alternating},
 i.e., changing sign as the orienation of $\Lc$ reverses.
It now remains to describe relations between $\Upsilon_\Lc$, equivalently,
 to characterize those alternating functions $f$, for which $\mu_f=0$.

Note that Hadwiger invariants depend functorially on $\hat\Ac$ in the sense of \ref{ss:functor}.
Indeed, a map $j:\hat\Ac\to\hat\Ac'$ such that $j(H)$ is parallel to $H$, for every $H\in\hat\Ac$,
 while $j(H')$ is a $\Z^n$-translate of $j(H)$ whenever $H'$ is a $\Z^n$-translate of $H$,
 gives rise to a map between the corresponding sets of subtori, i.e., a map from $\Ac$ to $\Ac'$.
Next, one extends this map to the flats so that intersections correspond to intersections,
 and to (oriented) flags.
Since Hadwiger invariants are labeled by oriented flags,
 we also have a natural map between Hadwiger invariants.

\subsection{Reciprocity laws}
Suppose that a flag $\Lc^\circ$ contains flats of all codimensions from 0 to $k$
 except exactly one codimension $m$, strictly between 0 and $k$.
There are then several (possibly, infinitely many) ways of choosing a codimension $m$
 flat $M$ to complete $\Lc^\circ$ so that $\Lc^\circ\cup\{M\}\in\Lambda_k$.
Fix some coorientation of $[\Lc^\circ]$.
It is not hard to verify the following relation between Hadwiger invariants:
$$
\sum_{M} \Upsilon_{\Lc^\circ\cup \{M\}}=0,
$$
 where the sum is over all codimension $m$ flats $M$ such that $\Lc^\circ\cup\{M\}\in\Lambda_k$,
 and all flags $\Lc^\circ\cup\{M\}$ from the sum come with the same coorientation of $[\Lc^\circ]$ chosen above.
Call this relation the \emph{reciprocity law} at $\Lc^\circ$.
Each $\Lc^\circ$ gives rise to its own reciprocity law.
When the coorientation of $[\Lc^\circ]$ gets reversed, the left-hand side of the equality changes sign,
 so that one obtains an equivalent relation.
Reciprocity laws as stated above mimic Parshin's reciprocity laws \cite{Par76,Lom82,BM96,Kho08} and, indeed, are
 closely related to the latter in the case of affine hyperplane arrangements or
 toric arrangements.

\subsection{Period vanishing conditions}
\label{ss:0pervan}
So far, it was not important how the $k$-volumes in $k$-dimensional flats of $\Ac$ were normalized.
Now, we fix a normalization.
Given a flat $L$ of $\Ac$, normalize the volume form on $L$ so that $L$ has total volume one.
In the discussion that follows, all codimensions are relative to $\T^n$.

Fix a flag $\Lc\in\Lambda_{m-1}$ and an oriented simple closed curve $\ga\subset [\Lc]$.
Another relation between the Hadwiger invariants has the form
$$
\sum_{M\subset [\Lc]} (\ga\cdot M)\Upsilon_{\Lc\cup\{M\}}=0.
$$
Here, the (possibly infinite) summation is over all codimension $m$ flats $M\subset [\Lc]$ such that
 $\Lc\cup\{M\}\in\Lambda_m$, and $\ga\cdot M$ denotes the intersection index of $\ga$ and $M$.
In order to make the summands well defined, one needs to fix coorientations of all $M$s relative to $\T^n$:
 to this end, fix some coorientation of $[\Lc]$, and, for each $M$, choose
 some coorientation of $M$ relative to $[\Lc]$.
Note: reversing the relative coorientation of $M$ means changing signs of
 both $\Upsilon_{\Lc\cup\{M\}}$ and $\ga\cdot M$, hence it does not influence the sum.
Relations just displayed are called \emph{period vanishing conditions}.
These relations are closely related to Cauchy's theorem in convex geometry
 (linear relations on the volumes of facets of a convex polytope).
Since the intersection index $\ga\cdot M$ depends only on the homology class of $\ga$,
 the number of independent period vanishing conditions for a given $\Lc$
 equals the first Betti number of $[\Lc]$.

\subsection{Main theorem}
\label{ss:main-thm}
The following result includes the theorem stated in \ref{ss:flag-val} but gives more
 (it describes all relations between Hadwiger invariants).

\begin{thm*}
The space $\Val(\Pc_{\hat\Ac};\Z^n)$ of all $\Z^n$-invariant simple valuations on $\hat\Ac$-polytopes is generated by Hadwiger invariants.
Relations between Hadwiger invariants are generated by the reciprocity laws and the period vanishing conditions.
\end{thm*}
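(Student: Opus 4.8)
The plan is to reduce first to a finite toric arrangement, where the problem becomes finite-dimensional, and then to prove generation by induction on dimension and the description of relations through a rank filtration, the essential inputs being a Hadwiger-type rigidity lemma and a presentation of a toric analogue of the Varchenko--Gelfand algebra. For the reduction: every $\hat\Ac$-polytope is already a $\Bc$-polytope for some finite $\Z^n$-invariant subcollection $\Bc\subset\hat\Ac$ (take the hyperplanes occurring in a defining presentation together with their $\Z^n$-translates), so restriction of valuations gives maps $\Val(\hat\Ac;\Z^n)\to\Val(\Bc;\Z^n)$ and one checks $\Val(\hat\Ac;\Z^n)=\varprojlim_\Bc\Val(\Bc;\Z^n)$. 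Under these maps each Hadwiger invariant restricts either to the corresponding invariant of $\Bc$ or to zero, and likewise for the reciprocity laws and period vanishing conditions (this is the functoriality of \ref{ss:functor}); so it suffices to treat a locally finite $\hat\Ac$, i.e.\ a finite arrangement $\Ac$ of subtori in $\T^n$. For such $\Ac$ the components of $\R^n\setminus\bigcup\hat\Ac$ are bounded and descend to the chambers of $\Ac$; every full-dimensional $\hat\Ac$-polytope is an inclusion--exclusion combination of chambers and faces of positive codimension; and sending a polytope to the finite formal sum of the chambers it contains shows that $\Scg^\circ(\Pc_{\hat\Ac};\Z^n)$ is free abelian on the set of chambers. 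Hence $\Val$ is the finite-dimensional space of $\Q$-valued functions on chambers and the theorem becomes finite linear algebra.

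\textbf{Generation.} I show by induction on $n$ that the $\Upsilon_\Lc$ span $\Val$ (this reproves the Theorem of \ref{ss:flag-val}). To a simple $\Z^n$-invariant $\mu$ and a codimension-one flat $L$ I attach a \emph{derived valuation} $\partial_L\mu$ on the arrangement induced by $\Ac$ in the $(n-1)$-torus $L$: near a generic point of $L$ the $\hat\Ac$-polytopes lying on the positive side with a facet on $L$ form an unconstrained family, and $\partial_L\mu$ records the first-order behaviour of $\mu$ as such a polytope collapses onto that facet; one checks this is well defined and a valuation. By induction $\partial_L\mu$ is a combination of Hadwiger invariants of the flats of $L$; prepending $\T^n\supset L$ turns their flags into flags of $\T^n$ whose Hadwiger invariants have the prescribed derived valuation at $L$ and, after a correction of lower rank handled by downward induction on rank, vanishing derived valuation at every other codimension-one flat. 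Subtracting, one is reduced to the case $\partial_L\mu=0$ for all $L$, and the decisive point is the Lemma: \emph{a simple $\Z^n$-invariant valuation with all derived valuations zero is a scalar multiple of $\vol=\Upsilon_{\{\T^n\}}$}. This is the analogue, in the hyperplane-constrained setting, of the classical fact that a simple translation-invariant valuation with vanishing facet derivatives is proportional to volume: vanishing of the $\partial_L\mu$ means that $\mu$ is unchanged when a polytope is cut along a translate of a flat and the pieces are slid apart and reassembled, hence $\mu$ is invariant under all combinatorial-type-preserving dissections, and an approximation argument identifies it with a Haar measure. Since the classical argument's supply of translated test polytopes is unavailable, one must run this near generic points of flats and patch the local conclusions.

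\textbf{Relations.} That the reciprocity laws and period vanishing conditions are relations is a direct check --- the former by enumerating the flags a given polytope carries (cf.\ \ref{ss:0pervan}), the latter because the $(n-k)$-volumes of the codimension-$k$ faces of a convex polytope satisfy Cauchy-type linear relations, one for each homology class of the ambient flat. For the converse, filter $\Val$ by rank, $V^0\subseteq\cdots\subseteq V^n=\Val$, with $V^k$ spanned by the Hadwiger invariants of rank $\le k$ (the final equality is generation). Determining all relations then reduces, by downward induction on the highest rank occurring in a relation, to computing the kernel of the surjection from alternating functions on oriented rank-$k$ flags onto $\mathrm{gr}^k\Val=V^k/V^{k-1}$. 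Stratify the rank-$k$ flags by their upper part $\Lc\in\Lambda_{k-1}$: for fixed $\Lc$, with only the bottom flat $M\subset[\Lc]$ varying, the relations among the $\Upsilon_{\Lc\cup\{M\}}$ are precisely the period vanishing conditions at $\Lc$ --- equivalently, this block of $\mathrm{gr}^k\Val$ is the cokernel of the intersection pairing $H_1([\Lc])\to\Q^{\{M\subset[\Lc]\}}$, which is the degree-one part of a toric analogue of the Varchenko--Gelfand algebra of the arrangement induced in $[\Lc]$ --- while the reciprocity laws at rank-$k$ flags with an interior gap account exactly for the relations mixing different upper parts $\Lc$. A spectral-sequence bookkeeping over the rank filtration then shows that nothing else occurs.

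\textbf{Main obstacle.} The hard part is the final step: that reciprocity laws and period vanishing conditions \emph{exhaust} the relations, i.e.\ that the presentation of each $\mathrm{gr}^k\Val$ has no hidden generators. This requires establishing the toric --- and, in the intended generality, topological --- analogue of the Varchenko--Gelfand presentation, and then controlling how the local contributions of the individual flats are glued together by the reciprocity laws without producing new constraints; this is where the combinatorial topology of the arrangement genuinely enters. The rigidity Lemma in the generation step is a secondary difficulty, precisely because the constrained setting removes the translated polytopes that make the classical proof transparent.
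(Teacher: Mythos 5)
Your proposal does not close either half of the theorem, and the two places where it is incomplete are precisely the places where the paper has to work. For generation: after your (reasonable) reduction to a finite toric arrangement, the ``derived valuation'' $\partial_L\mu$ is not defined, because there are no continuous families of $\hat\Ac$-polytopes collapsing onto a facet --- all facets must lie on the finitely many hyperplanes of $\Ac$ (locally, a discrete set of lifts in $\R^n$), so the asserted ``unconstrained family'' near a generic point of $L$ simply does not exist, and there is no limit to take. Even granting some combinatorial substitute for $\partial_L\mu$, the rigidity lemma (``all derived valuations zero $\Rightarrow$ proportional to volume'') is exactly the classical Hadwiger-type statement whose proof you concede is unavailable in this constrained setting; you do not supply a replacement. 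The paper avoids all of this: it identifies $\Val$ with $\Vc^*$ for the space of convex chains $\Vc$, embeds the quotients $\Vc_k$ of the degree filtration into $\Fc_k$, and peels off one rank at a time with the explicit formula $\mu'=\tfrac12\sum_{\Lc}\mu(\1_\Lc)\Upsilon_\Lc$ (see \ref{ss:newlook}--\ref{ss:gen-pf}); no induction on dimension and no rigidity statement is needed.

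For relations, your filtration scheme asserts rather than proves the decisive facts. First, the claim that the kernel of $\Fc_k\to\mathrm{gr}^k\Val$ is generated by reciprocity laws and period vanishing conditions is (dually) the content of the theorem in \ref{ss:descVck}, whose proof occupies the integration machinery of \ref{ss:integr}--\ref{ss:intrec}, the correction lemma \ref{ss:abstract}, and the cohomological input of \ref{ss:Vc1}; ``spectral-sequence bookkeeping'' and the blockwise stratification by the upper part $\Lc\in\Lambda_{k-1}$ are not arguments, and indeed $\Vc_k$ is not a direct sum over such blocks, so the per-block identification with period vanishing and the claim that reciprocity laws ``account exactly'' for the mixing both beg the question you flag yourself as the main obstacle. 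Second, your downward induction on the top rank of a relation conflates a combination of rank-$k$ invariants that \emph{vanishes} with one that merely lies in the span of lower ranks; to separate ranks the paper needs the scaling lemma of \ref{ss:rankk} (dilation by $\la$), which is specific to rational hyperplanes and $\Z^n$-invariance and is absent from your argument. Third, the inverse-limit reduction does not yield the statement for infinite $\hat\Ac$ with infinite linear combinations of generators and relations: compatible finite-level representations need not glue, since coefficients are not unique; this is exactly what the locally finite/compactness theorem of Section \ref{s:linalg} (\ref{ss:rels}, \ref{ss:comp}) is for, and nothing in your proposal replaces it.
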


Here, we allow for infinite linear combinations of generators as well as infinite linear combinations of relations.
Note that the generating set of $\Val(\Pc_{\hat\Ac};\Z^n)$ is stable with respect to \emph{all} rational translations.
A special case of the main theorem where $\hat\Ac$ consists of all rational hyperplanes in $\R^n$ is close to \cite{Mor93}.
However, firstly, we use completely different methods, of more topological flavor.
Secondly, our description concerns all simple $\Z^n$-invariant valuations on rational polytopes
 rather than only their restrictions to the class of integer polytopes.
In \cite{McM93}, there is another closely related result, where all, not necessarily simple, valuations are considered.

Versions of the main theorem are possible, and provable in the same way,
 where rational hyperplanes from $\Ac$ are replaced with hypersurfaces in $\T^n$ satisfying
 certain natural topological assumptions.
For example, if $n=2$, it suffices to consider any collection of essential simple closed
 curves in $\T^2$ such that, firstly, any pair of curves from the given collection
 intersects transversely (in a topological sense) and, secondly, the homology classes of
 these curves generate $\Hs_1(\T^2)$.

\subsection{All simple valuations}
As already explained, the choice of the ``main theorem'' is somewhat arbitrary since
 the main point is a new method. 
We give other illustrations of the method.
For example, all simple valuations on $\Ac$-polytopes and all the relations between them
 are described in \ref{ss:val-polyt} for arbitrary collections $\Ac$ of affine hyperplanes in $\R^n$.
Namely, let now $\Lc$ be a complete flag (including flats of $\Ac$ of all dimensions between 0 and $n$)
 in $\R^n$ (rather than in the torus), equipped with some coorientation of $[\Lc]$,
 and define $\Upsilon_\Lc$ similarly to the above: $\Upsilon_\Lc(P)=\pm 1$ if $\Lc$ is a flag of $P$ and 0 otherwise.
In the former case, the sign depends on whether the coorientation of $[\Lc]$ induced
 by $P$ is the same as the chosen one.
Hadwiger invariants (now defined only for complete flags) generate the space $\Val(\Pc_\Ac;\mathsf{0})$
 of all simple valuations on $\Ac$-polytopes.
Relations between Hadwiger invariants are generated by the reciprocity laws and
 the period vanishing conditions.
See \ref{ss:val-polyt} for further details; note also that the special case where $\Ac$
 contains all affine hyperplanes in $\R^n$ is equivalent to \cite[Theorem 3.5]{Dup01}.
Period vanishing conditions for complete flags look especially simple.
Given an almost complete flag $\Lc$ that misses only the dimension 0 term,
 the corresponding relation is $\sum_{a\in [\Lc]} \Upsilon_{\Lc\cup\{a\}}=0$,
 where the sum is over all dimension 0 flats $\{a\}\subset[\Lc]$, and
 coorientations of $a$ are chosen consistently.

In \ref{ss:val-polyh}, we consider all cosimple simple valuations on $\Ac$-polyhedra, not necessarily bounded.
Valuations being \emph{cosimple} means that they vanish on polyhedra containing
 affine subspaces of positive dimension.
This setting is perhaps farther from classical theory of scissors congruences, since the
 latter views polytopes as figures which can be dissected into simplices.
On the other hand, for finite $\Ac$, the result of \ref{ss:val-polyh} can be deduced from \cite[Theorem 2.4]{SV91} and
 a graded vector space isomorphism \cite{VG87} between the Orlik--Solomon algebra and the Varchenko--Gelfand algebra.


\subsection{Topological arrangements}
All announced results remain true if a collection $\Ac$ of hyperplanes in an affine space or
 in a torus is replaced with a collection of topological hypersurfaces, subject to minor topological requirements.
We now restrict our attention to the case where $\Ac$ is finite.
One needs to assume that every flat (a component of an intersection of hyperplanes)
 is also a toplogical manifold and that hypersurfaces from $\Ac$ cut each flat into topological cells.
In this case, $\Ac$ is called a \emph{topological arrangement} (see \ref{ss:top-arr} for a formal definition).
The ambient space does not have to be $\R^n$ or $\T^n$ either; it can be an arbitrary topological manifold.
An important virtue of this topological approach is that reciprocity laws and
 period vanishing conditions acquire natural cohomological interpretations.

Topological arrangements 
 generalize both affine hyperplane arrangements
 and toric arrangements \cite{dCP05,CDD+20}.
Specialized to the case of toric arrangements, our methods yield a description
 for the toric analog of the Varchenko--Gelfand algebra \cite{VG87} (see Section \ref{s:VGalg}).
Even in the case when all flats have trivial topology, the generalization provided
 by topological arrangements is meaningful: it includes, e.g., arrangements of ``pseudolines''.
For example, in the classical Pappus arrangement, one can deform one
 line locally to avoid one of the vertices in the class of pseudoline arrangements,
 even though the arrangement obtained is not realizable over any field.
More than that, in the plane, one can take any collection of simple curves, ``lines'',
 going from infinity to infinity and intersecting transversely in a topological sense
 at finitely many points; two different ``lines'' may even have several intersection points
 (see Section \ref{ss:draw}).

\subsection{Organization of the paper}
Section \ref{s:cc} restates the main theorem and some related results in a pre-dual language of convex chains.
The main theorem follows from claims made in Section \ref{s:cc} by methods of linear algebra discussed in Section \ref{s:linalg}.
In Section \ref{s:semireg}, we lay a 
 foundation for studying topological
 arrangements; the latter is initiated in Section \ref{s:top-arr}.
By definition, topological arrangements form a special class of CW spaces.
Any CW space gives rise to the corresponding algebraic (co)chain complex whose
 (co)homology is the (co)homology of $X$.
For topological arrangements, one obtains several algebraic (co)chain complexes.
A relationship between those is important for our main results.

Given a hyperplane arrangement $\Ac$ in $\R^d$, \emph{the Varchenko--Gelfand algebra} of $\Ac$
 is by definition the algebra of locally constant functions on the complement of $\bigcup\Ac$.
Section \ref{s:VGalg} discusses generalizations of the Varchenko--Gelfand algebras to topological arrangements.
While the definition of the algebra itself extends in a straightforward way,
 the degree filtration requires an interpretation.
Quotients of the degree filtration are studied in Section \ref{s:rels} via
 their embeddings into spaces of alternating functions on oriented flags.
Finally, Sections \ref{s:aff} and \ref{s:tor} deduce properties of
 the spaces of convex chains for arbitrary collections of affine or rational toric hyperplanes
 from those of finite arrangements.
In particular, the proof of the main theorem concludes in Section \ref{s:tor}.

\section{Convex chains}
\label{s:cc}
Below, we take a viewpoint on valuations as linear functionals on relevant spaces of convex chains.
Description of these spaces yields description of valuations by dualization.

\subsection{Dimension one}
Before discussing general methods, we describe the simplest special case, namely, that of dimension 1.
Let $\hat\Ac$ be any collection of points in $\R^1$; then $\hat\Ac$-polytopes are
 compact intervals with endpoints in $\hat\Ac$.
Nontrivial flags correspond to the case $k=1$ and identify with single points of $\hat\Ac$.
Oriented flags are points of $\hat\Ac$ equipped with directions of motion through these points;
 a standard orientation is from left to right.
Given such an oriented flag $\Lc=\{a\}\subset\hat\Ac$, the Harwiger invariant $\Upsilon_\Lc$ takes value
 $1$ on any interval $[b;a]$ with $b<a$, value $-1$ on any interval $[a;b]$ with $a<b$, and value 0
 on all other intervals.
An infinite linear combination $\sum_{a\in\hat\Ac} f(a)\Upsilon_{\{a\}}$ corresponds to
 a function $f:\hat\Ac\to\Q$; in other words, every such function $f$ defines a valuation $\mu_f$.
More explicitly, $\mu_f[a;b]=f(b)-f(a)$ whenever $a<b$ and $a$, $b\in\hat\Ac$.
Clearly, any simple valuation on $\hat\Ac$-polytopes can be represented in this form.
The only relation is the period vanishing condition $\mu_1=0$, where $1$ in the subscript
 denotes the constant function with value 1.
In other words, the space of all simple valuations on $\hat\Ac$-polytopes can be described
 as the space of functions on $\hat\Ac$, modulo constants.

Suppose now that $\hat\Ac$ is a $\Q$-vector subspace of $\R$, and a simple valuation $\mu_f$
 is $\hat\Ac$-\emph{invariant}, i.e.,  invariant under all translations by vectors from $\hat\Ac$.
Under these assumptions, it is enough to choose $f$ so that $f(0)=0$ (which is
 always possible since adding a constant to $f$ does not alter $\mu_f$);
 this choice will guarantee that $f$ is linear over $\Q$ --- an easy consequence of
 the additivity of $\mu_f$ together with its translation invariance.
Thus, the $\Q$-vector space of all $\hat\Ac$-invariant simple valuations on $\hat\Ac$-polytopes
 can be identified with the space of all $\Q$-linear functions on $\hat\Ac$.

However, the context of our main theorem is when $\hat\Ac$ is stable under $\Z$-translations,
 and the valuations considered are assumed to be simple and $\Z$-invariant.
Any such valuation can be written as $\mu_f=\sum_{a\in\Ac} f(a)\Upsilon_{\{a\}}$, where
 $f$ is now a $\Q$-valued function on $\Ac$, the projection of $\hat\Ac$ onto $\R/\Z$.
Equivalently, $f$ can be thought of as a $\Z$-periodic function on $\hat\Ac$.
Again, the only relation is that $\mu_1=0$.
The space $\Val(\Pc_{\hat\Ac};\Z)$ identifies with the space of all functions on $\Ac$,
 modulo constants.

\subsection{Space of convex chains}
Fix a collection $\Ac$ of rational hyperplanes (=codimension one subtori) in $\T^n$ and assume that
 $\Ac$ includes an $n$-tuple of hyperplanes with intersection of dimension 0.
By \emph{$\Ac$-polytopes}, we mean topological cells obtained as components of
 $H_1\cap\dots\cap H_k\sm (H_{k+1}\cup\dots\cup H_m)$, for some $H_1$, $\dots$, $H_m\in\Ac$
 and some $k<m$.
Let $\Vc=\Vc(\Ac)$ be the subspace of $L^\infty(\T^n;\Q)$ represented by functions
 $f:\T^n\to\Q$ with $f(\T^n)$ finite and such that $f^{-1}\{c\}$ is a finite union of $\Ac$-polytopes,
 for every $c\in\Q$.
As is indicated by the membership $f\in L^\infty$, two functions as above are identified
 if they differ on a measure zero set, say, on a union of finitely many hyperplanes from $\Ac$.
Call $\Vc$ the \emph{space of convex chains} associated with $\Ac$.
This space is spanned by the indicator functions of open $\Ac$-polytopes
 (recall that, for a subset $A\subset\R^n$, the \emph{indicator function} $\1_A$ of $A$
 is the function on $\R^n$ that takes value 1 at all points of $A$ and value 0 at all other points).
Simple $\Z^n$-invariant valuations can be viewed as linear functionals on $\Vc$, see \cite{Vol57,PS70,Gro78};
 thus $\Val(\Pc_{\hat\Ac};\Z^n)=\Vc^*$ is the dual space of $\Vc$ over $\Q$.
Often, it is preferable to study $\Vc$ instead of $\Vc^*$, as this space is somewhat smaller.
If $\al\in\Vc$ and $\mu\in\Vc^*$, then $\<\mu,\al\>$ stands for the
 value of the natural bilinear nondegenerate pairing between $\Vc^*$ and $\Vc$.
Other versions of convex chains are discussed in \cite{Gro78,PKh92a}.

\subsection{Elementary chains}
\label{ss:elem}
Given a flat $L$ of $\Ac$, write $\Ac|_L$ for the collection of hyperplanes in $L$
 formed by components of intersections $L\cap H$, where $H\in\Ac$ are such that $L\not\subset H$.
Consider a flag $\Lc\in\Lambda_k$, i.e., a flag of $\Ac$ of the form $L^0\supset\dots\supset L^k$,
 where $L^i$ has codimension $i$, and $i$ ranges through all integers from $0$ to $k$.
Let $E^k_\Lc$ stand for the space of convex chains in $[\Lc]$ associated with $\Ac|_{[\Lc]}$,
 twisted by the coorientations of $[\Lc]$.
\emph{Twisting} means the following: if a coorientation of $[\Lc]$ is fixed, then $E^k_\Lc$ identifies
 with the space of convex chains in $[\Lc]$.
On the other hand, reversing the coorientation of $[\Lc]$ means that the corresponding identification changes sign.
Define $E^k$ as the direct sum of $E^k_\Lc$ over all $\Lc\in\Lambda_k$.
Recall that every element $\xi\in E^k$ has the form
$$
\xi=\bigoplus_{\Lc\in\Lambda_k} \xi_\Lc,
$$
 where only finitely many terms $\xi_\Lc$ are nonzero;
 the term $\xi_\Lc$ is called the \emph{$\Lc$-component} of $\xi$.
Spaces $E^k$ are called \emph{spaces of elementary chains}.

\subsection{Leray operators}
\label{ss:Ler-op}
We now define certain linear maps $D:E^k\to E^{k+1}$.
For $\xi\in E^k$ and a flag $\Lc\in\Lambda_{k+1}$, the $\Lc$-component
 $(D\xi)_\Lc\in E^{k+1}_\Lc$ of $D\xi$ is defined as follows.
Recall that $[\Lc]$ is the smallest by inclusion flat of $\Lc$, and let $\Gamma\Lc\in\Lambda_k$
 be the flag obtained from $\Lc$ by dropping the term $[\Lc]$.
Having chosen some coorientaion of $[\Gamma\Lc]$ and some coorientation of $[\Lc]$ relative to $[\Gamma\Lc]$,
 we can identify $\xi_{\Gamma\Lc}$ with a function on $[\Gamma\Lc]$, and talk about the positive and
 the negative sides of $[\Lc]$ in $[\Gamma\Lc]$.
Also, our choices give rise to some coorientation of $[\Lc]$ in $X$,
 the \emph{composition} of the chosen coorientation of $[\Gamma\Lc]$ and
 the chosen relative coorientation of $[\Lc]$ in $[\Gamma\Lc]$.
Take the limit of $\xi(q)$ as $q\in [\Gamma\Lc]$ approaches the flat $[\Lc]$ from its positive side
 and subtract the limit of $\xi(q)$ as $q$ approaches the flat $[\Lc]$ from the negative side.
Set the result of this discrete differentiation to be $(D\xi)_\Lc$,
 identified with an element of $E^{k+1}_{\Lc}$ using the above mentioned coorientation of $[\Lc]$.
It is a function on $[\Lc]$, well defined outside a union of codimension $>k+1$ flats.
Reversing the coorientation of $[\Gamma\Lc]$ means replacing $\xi$ with $-\xi$,
 hence also replacing $(D\xi)_\Lc$ with $-(D\xi)_\Lc$.
Changing the relative coorientation of $[\Lc]$ in $[\Gamma\Lc]$ swaps the sides of $[\Lc]$,
 hence the difference changes sign; again, $(D\xi)_\Lc$ is replaced with $-(D\xi)_\Lc$.
Thus, indeed, $(D\xi)_\Lc$ is a well-defined element of $E^{k+1}_\Lc$.
Call $D$ the \emph{Leray operator}; it is a version of discrete differential.
We may write $D_k$ instead of $D$ if the dependence of $D$ on $k$ needs to be emphasized.

\subsection{Spaces $\Fc_k$ and the degree filtration}
\label{ss:degfilt0}
Let $\Fc_k$ denote the kernel of $D:E^k\to E^{k+1}$.
If $\xi\in\Fc_k$, then, for every flag $\Lc\in\Lambda_k$, the $\Lc$-component $\xi_\Lc$ of $\xi$
 identifies with a constant function on $[\Lc]$.
For this reason, $\xi$ can be viewed as an \emph{alternating function on the set $\Lambda^\Oc_k$
 of oriented flags from $\Lambda_k$}, where an orientation of $\Lc\in\Lambda_k$ means
 a choice of a coorientation of $[\Lc]$, and $\xi$ being alternating means that it changes sign
 whenever $\Lc$ reverses its orientation.
Define $\Vc_{\le k}$ as the kernel of $D^{k+1}:\Vc=E^0\to E^{k+1}$.
These spaces form a filtration
$$
\Vc_{\le 0}\subset \Vc_{\le 1}\subset \dots \subset \Vc_{\le n}=\Vc
$$
 of $\Vc$ called the \emph{degree filtration}.
Setting $\Vc_{<k}:=\Vc_{\le k-1}$, consider the \emph{quotients} $\Vc_k:=\Vc_{\le k}/\Vc_{<k}$ of the degree filtration.
By definition of the degree filtration, $D^k$ embeds the quotient $\Vc_k$ into $\Fc_k$.
We identify $\Vc_k$ with its image in $\Fc_k$.
Our next objective is a description of defining linear equations for $\Vc_k$ as a subspace of $\Fc_k$.

\subsection{Reciprocity laws in $E^k$}
\label{ss:recipEk}
Consider a flag $\Lc^\circ$ of $\Ac$ that includes flats of all codimensions from 0 to $k$
 except just one 
 codimension $m$ with $0<m<k$.
An element $\xi\in \Fc_k$ satisfies the \emph{reciprocity law} at $\Lc^\circ$ if
 the sum of $\xi_{\Lc^\circ\cup\{M\}}$
 over all codimension $m$ flats $M$ such that $\Lc^\circ\cup\{M\}\in\Lambda_k$, vanishes.
Here, the orientations of all flags $\Lc^\circ\cup\{M\}$ are given by the same coorientation of $[\Lc^\circ]$,
 and, using these orientations, all terms $\xi_{\Lc^\circ\cup\{M\}}$ identify with rational numbers.
Note: even though the sum may be infinite, it has only finitely many nonzero terms.
It is not hard to see that all elements of $\Vc_k$ satisfy all the reciprocity laws,
 i.e., reciprocity laws at all flags $\Lc^\circ$ as described above.

More generally, one can state reciprocity laws for $\xi\in E^k$;
 the sum of $\xi_{\Lc^\circ\cup\{M\}}$ is no more a rational number but
 a convex chain on $[\Lc]$; the reciprocity law at $\Lc^\circ$ stipulates
 that this convex chain vanishes.

\subsection{Period vanishing in $E^k$}
\label{ss:pervanEk}
Let $\xi=\bigoplus_\Lc \xi_\Lc$ be an element of $E^k$, for $k>0$.
Now fix $\Lc^\circ\in\Lambda_{k-1}$ and consider all flags $\Lc\in\Lambda_k$ that include $\Lc^\circ$.
Set $\xi_{\circ}$ to be the direct sum of $\xi_\Lc$ over all $\Lc\in\Lambda_k$ such that $\Lc\supset\Lc^\circ$.
Even though $\xi_{\circ}$ is not a smooth differential 1-form on $[\Lc^\circ]$,
 one can define the \emph{integral} of $\xi_{\circ}$ over a smooth simple path $\ga$ in $[\Lc^\circ]$
 in general position with respect to all $[\Lc]$ such that $\Lc^\circ\subset\Lc$: it is equal
 to the sum of $\xi_\Lc(\ga(t))$ for all $(t,\Lc)$ such that $\ga(t)\in [\Lc]$.
Here, a pair $(t,\Lc)$ with $\ga(t)\in [\Lc]$ defines a relative coorientation of $[\Lc]$ in $[\Lc^\circ]$ such that $\ga$ crosses
 $[\Lc]$ from the negative side to the positive side at time $t$;
 together with some fixed coorientation of $[\Lc^\circ]$ in $\T^n$,
 these relative coorientations of $[\Lc]$ define orientations of $\Lc$ used in the computation of $\xi_\Lc(\ga(t))$.
Assume that $\xi$ satisfies the reciprocity laws; it is then straightforward to verify that
 the integral of $\xi_{\circ}$ over any boundary vanishes, i.e., $\xi_{\circ}$ is a cocycle.
It follows that $\xi_{\circ}$ represents a first cohomology class $[\xi_{\circ}]$ of $[\Lc^\circ]$.
By definition, $\xi$ \emph{satisfies the period vanishing condition at} $\Lc^\circ$ if $[\xi_{\circ}]=0$,
 that is, if $\xi_{\circ}$ is a coboundary.
Saying that $\xi$ satisfies \emph{all} the period vanishing conditions (i.e., period vanishing conditions at all $\Lc^\circ\in\Lambda_{k-1}$)
 is equivalent to saying that $\xi=D\eta$ for some $\eta\in E^{k-1}$; see Section \ref{ss:pervan}.
Our description of the period vanishing conditions in $E^k$ given here is consistent
 with that given in \ref{ss:0pervan} for $\Fc_k$.

\subsection{A description of $\Vc_k$}
\label{ss:descVck}
The following is a pre-dual version of the main theorem, cf. \ref{ss:main-thm}.

\begin{thm*}
The subspace $\Vc_k\subset\Fc_k$ consists of all elements of $\Fc_k$
 that satisfy all the reciprocity laws and all the period vanishing conditions.
\end{thm*}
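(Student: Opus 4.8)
The plan is to prove the two inclusions separately, with essentially all of the work in ``$\supseteq$''. The inclusion ``$\subseteq$'' is the content of \ref{ss:recipEk}--\ref{ss:pervanEk}: any $\xi\in\Vc_k$ is of the form $D^k\alpha$ with $\alpha\in\Vc=E^0$, so it lies in $D(E^{k-1})$ and hence (by the criterion recalled in \ref{ss:pervan}) satisfies all the period vanishing conditions; that it satisfies all the reciprocity laws is the routine verification indicated in \ref{ss:recipEk} — one checks it directly on components, or inductively, using that after fixing coorientations the reciprocity law of $D\eta$ at a flag of $\Lambda_{j+1}$ is the Leray image of the reciprocity law of $\eta$ at the corresponding flag of $\Lambda_j$.

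For ``$\supseteq$'' I would argue by induction on $k$, proving the slightly stronger assertion that $\im\bigl(D^k\colon E^0\to E^k\bigr)$ equals the set of all $\eta\in E^k$ satisfying every level-$k$ reciprocity law and period vanishing condition (restricting this to $\Fc_k$ recovers the theorem; note that if $D^k\alpha=\xi$ with $\xi\in\Fc_k$ then $D^{k+1}\alpha=D\xi=0$ automatically, so membership in $\Vc_{\le k}$ is free). The first move is a reduction to finite $\Ac$: the given element is a finite sum of components, each supported on finitely many flats, hence lives inside a finite subarrangement, which I may enlarge so as to retain the standing hypotheses on $\Ac$ and all the flats and flags entering its reciprocity and period vanishing conditions; the hypothesis and the conclusion are both insensitive to this replacement (this is exactly the passage from arbitrary to finite collections carried out in Sections \ref{s:aff}--\ref{s:tor}). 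So I may assume $\Ac$ is a topological arrangement in $\T^n$ and use its CW structure and associated cochain complexes, developed in Sections \ref{s:top-arr}--\ref{s:rels}.

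For the inductive step, take $\xi\in E^k$ satisfying all reciprocity laws and period vanishing conditions. The reciprocity laws make the chains $\xi_\circ$ on the flats $[\Lc^\circ]$, $\Lc^\circ\in\Lambda_{k-1}$, into cocycles, and the period vanishing conditions say they are coboundaries; by \ref{ss:pervan} this yields $\eta_0\in E^{k-1}$ with $D\eta_0=\xi$. By the inductive hypothesis it then suffices to correct $\eta_0$ within its coset $\eta_0+\Fc_{k-1}$ — where $\Fc_{k-1}=\ker(D\colon E^{k-1}\to E^k)$ is precisely the space of alternating functions on the oriented flags of $\Lambda_{k-1}$ — so that the corrected element satisfies all level-$(k-1)$ reciprocity laws and period vanishing conditions; then it lies in $\im D^{k-1}$ and $\xi$ lies in $\im D^k$. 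What makes this possible is that the defects of $\eta_0$ are already ``constant'': the reciprocity law of $\xi=D\eta_0$ at a flag of $\Lambda_k$ reads precisely as the vanishing of the jump, across a codimension-one subflat, of the reciprocity defect $\Lc^\circ\mapsto\sum_M(\eta_0)_{\Lc^\circ\cup\{M\}}$; having no jumps on a connected flat, this defect is a constant convex chain, i.e.\ an alternating function on oriented flags of the same nature as the available corrections, and the period vanishing conditions on $\xi$ constrain the period defects of $\eta_0$ in the analogous way. One is thereby reduced to a combinatorial-topological statement: the map sending $\zeta\in\Fc_{k-1}$ to its family of reciprocity and period defects surjects onto the subspace of defects realized by honest lifts. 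For the reciprocity part this is linear algebra with alternating functions on flags; for the period part it comes down to the classes $[M]$ of subflats spanning the relevant cohomology of the ambient flats, which is guaranteed by the hypothesis that $\Ac$ contains an $n$-tuple with singleton intersection together with the topological-arrangement axioms.

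The hard part will be exactly this correction step, i.e.\ the exactness of the cochain complexes attached to the topological arrangement: in the Leray-type spectral sequence of $\Ac$ the reciprocity laws cut out the $d_1$-cocycles and the period vanishing conditions the $d_1$-coboundaries modulo the cohomology of the flats, and what must be shown is that these are the only obstructions. This is where the topological hypotheses on $\Ac$ (each flat a manifold, hyperplanes cutting each flat into cells) are genuinely used, and by the cellular structure it localizes to an acyclicity statement on individual cells; the remaining ingredients — the period vanishing criterion of \ref{ss:pervan}, the coorientation bookkeeping, and the reduction to finite arrangements — should be routine by comparison.
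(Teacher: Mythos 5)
Your skeleton matches the paper's: the easy inclusion via heritage of the reciprocity laws and the coboundary criterion of \ref{ss:pervan}, the reduction to a finite subarrangement, and then an iterated ``integrate and correct inside $\Fc_{k-1}$'' descent. The gap is that the decisive step --- the existence of the correcting element --- is precisely what you defer as ``the hard part'', and the route you sketch for it would not produce it. What you need is: given any primitive $\eta_0$ with $D\eta_0=\xi$, an element of $\Fc_{k-1}$ whose reciprocity defects \emph{and} period defects agree with those of $\eta_0$. This surjectivity is not ``guaranteed by the hypothesis that $\Ac$ contains an $n$-tuple with singleton intersection together with the topological-arrangement axioms'', and it does not localize to an acyclicity statement on individual cells: the obstruction is global, namely the nontrivial first cohomology of the torus flats. (In the pseudoaffine case, where flats are spheres, the intermediate period conditions are vacuous for $k<n$, see \ref{ss:desc-Vck}; so no cell-local exactness can be the issue.) The paper supplies exactly the two inputs you are missing: (i) the lemma of \ref{ss:Vc1}, that on every flat the cohomology classes of its codimension-one subflats span $\Hs^1$ --- proved by a genuine covering-space argument (a loop with all intersection indices zero lifts to $\R^n$ with both endpoints in the same bounded minimal $\hat\Ac$-polyhedron, hence is trivial), which uses the (pseudo)toric structure and is not a formal consequence of the axioms; and (ii) the graded linear-algebra lemma of \ref{ss:abstract}, which allows the correcting constant cocycles to be chosen so as to respect the reciprocity relations.

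There is also a structural difference that makes your version harder than necessary. The paper does not correct an arbitrary primitive: it integrates with the map $\Is_\phi$ of \ref{ss:integr}, and the second lemma of Section \ref{s:rels} (a nontrivial computation you do not address) shows this particular primitive already satisfies the lower-level reciprocity laws and the cocycle condition. Consequently only the period defects remain, and the correction need only \emph{preserve} reciprocity, which is what \ref{ss:abstract} combined with \ref{ss:Vc1} accomplishes, as in \ref{ss:tork<n}. Your plan instead posits one combined surjectivity of the (reciprocity, period)-defect map on $\Fc_{k-1}$, a stronger claim for which you give no argument beyond an appeal to a Leray-type spectral sequence. (A minor further point: your strengthened inductive statement over all of $E^{k}$ is only correct if ``period vanishing'' is taken in the coboundary sense, so that the cocycle condition of \ref{ss:cocycle} is subsumed; for elements outside $\Fc_k$ the reciprocity laws alone do not imply it.) Until the correction step is proved --- either by establishing your surjectivity claim or by restructuring it along the paper's lines --- the argument is incomplete.
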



\subsection{A new look at Hadwiger invariants}
\label{ss:newlook}
Let $\hat\Ac$ be a set of rational hyperplanes in $\R^n$ that is stable under $\Z^n$-translations
 and such that some subset of $\hat\Ac$ has singleton intersection.
Set $\Ac$ to be the image of $\hat\Ac$ under the natural projection from $\R^n$ onto $\R^n/\Z^n$;
 we assume that all the notions introduced above and the
 corresponding notation ($\Vc$, $\Vc_k$, $\Fc_k$, $E^k$, etc.) refer to this choice of $\Ac$.
Any simple $\Z^n$-valuation on $\hat\Ac$-polytopes can be viewed as
 a linear functional on $\Vc$.
Given an oriented flag $\Lc\in\Lambda^\Oc_k$, let $\pi^\Lc$ be the
 linear projection from $E^k$ to the space of convex chains on $[\Lc]$.
Writing $\vol_{[\Lc]}$ for the volume valuation on $[\Lc]$, we can now represent
 the Hadwiger invariant $\Upsilon_\Lc$ as 
$$
\Upsilon_\Lc = \vol_{[\Lc]}\circ \pi^\Lc\circ D^k.
$$
It follows that $\Upsilon_\Lc$ vanishes on $\Vc_{<k}$, hence defines
 a linear functional $\tilde\Upsilon_\Lc$ on $\Vc_k$.
Under the identification between $\Vc_k$ and a subspace of $\Fc_k$,
 this functional $\tilde\Upsilon_\Lc$ identifies with $\vol_{[\Lc]}\circ\pi^\Lc$.

Consider the function $\1_\Lc\in\Fc_k$ on $\Lambda^\Oc_k$ defined as follows:
 it takes value $1$ on $\Lc$, value $-1$ on the same flag with the opposite orientation,
 and value $0$ on all other flags from $\Lambda^\Oc_k$.
As $\Lc$ varies, elements $\1_\Lc$ form a generating set of $\Fc_k$,
 that is, every element of $\Fc_k$ can be written as a finite linear combination of $\1_\Lc$.
Due to our normalization of the volume $\vol_{[\Lc]}$, the functional $\tilde\Upsilon_\Lc$
 takes values $\pm 1$ on $\pm\1_\Lc$ and value 0 on all other generators of $\Fc_k$.
Hence, any linear functional $\nu$ on $\Fc_k$ can be written as
$$
\nu=\frac 12\sum_{\Lc\in\Lambda^\Oc_k} \nu(\1_\Lc) \tilde\Upsilon_\Lc.
$$
Indeed, when evaluated on $\1_\Lc$ for $\Lc\in\Lambda^\Oc_k$, the left-hand side yields $\nu(\1_\Lc)$,
 while the right-hand side equals
$$
\frac 12\left(\nu(\1_\Lc)\tilde\Upsilon_\Lc(\1_\Lc) + \nu(\1_{-\Lc})\tilde\Upsilon_{-\Lc}(\1_\Lc)\right)=\nu(\1_\Lc),
$$
 where $-\Lc\in\Lambda^\Oc_k$ is, geometrically, the same flag as $\Lc$ but with the opposite orientation.
Here, we used that $\tilde{\Upsilon}_\Lc(\1_\Lc)=1$ and that $\tilde\Upsilon_{-\Lc}(\1_\Lc)=-1$;
 also, we used that $\1_{-\Lc}=-\1_\Lc$.

\subsection{Hadwiger invariants generate $\Vc^*$}
\label{ss:gen-pf}
We now show that $\Vc^*$ is generated by the Hadwiger invariants $\Upsilon_\Lc$,
 which is a part of the Main Theorem.
Let $\mu\in\Vc^*$ be any simple $\Z^n$-valuation on $\hat\Ac$-polytopes.
One can view $\mu$ as a linear functional on $\Vc$.
It suffices to show that, if $\mu$ vanishes on $\Vc_{<k}$, then
 there is a (possibly infinite) linear combination $\mu'$ of rank $k$ Hadwiger invariants
 such that $\mu-\mu'$ vanishes on $\Vc_{\le k}$.
Note that any rank $k$ Hadwiger invariant vanishes on $\Vc_{<k}$,
 hence it defines a linear functional on $\Vc_k$.
Both $\mu$ and $\mu'$ are now viewed as linear functionals on $\Vc_k$.
Thus, it suffices to find a linear combination $\mu'$ of rank $k$ Hadwiger invariants that
 takes the same values on $\Vc_k$ as $\mu$.
By \ref{ss:newlook}, the desired $\mu'$ is
$$
\mu':=\frac 12\sum_{\Lc\in\Lambda^\Oc_k} \mu(\1_\Lc) \Upsilon_\Lc.
$$

\subsection{Distinct ranks are independent}
\label{ss:rankk}
Hadwiger invariants of distinct ranks are linearly independent.

\begin{lem*}
Consider a linear relation $\sum_{\Lc} f(\Lc)\Upsilon_\Lc=0$ between Hadwiger invariants,
 where $f$ is an alternating function on $\bigcup_k\Lambda^\Oc_k$.
Then
$$
\sum_{\Lc\in\Lambda^\Oc_k} f(\Lc)\Upsilon_\Lc=0,\qquad\forall\ k=0,\dots,n.
$$
\end{lem*}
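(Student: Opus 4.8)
The plan is to exploit the fact that a rank $k$ Hadwiger invariant $\Upsilon_\Lc$ is, by the identity of \ref{ss:newlook}, $\vol_{[\Lc]}\circ\pi^\Lc\circ D^k$, and in particular vanishes on $\Vc_{<k}$ but is generally nonzero on $\Vc_k$. So the strategy is to test the given relation $\sum_\Lc f(\Lc)\Upsilon_\Lc=0$ against convex chains lying in successive pieces of the degree filtration, starting from the bottom. First I would evaluate the relation on an arbitrary element $\al\in\Vc_{\le 0}$. Every $\Upsilon_\Lc$ with $\rk(\Lc)=k\ge 1$ kills $\al$ because $D^k\al=0$, so what survives is $\sum_{\Lc\in\Lambda^\Oc_0} f(\Lc)\Upsilon_\Lc(\al)=0$. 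Since this holds for all $\al\in\Vc_{\le 0}=\Vc_0$, it gives the desired vanishing $\sum_{\Lc\in\Lambda^\Oc_0} f(\Lc)\Upsilon_\Lc=0$ (the rank-$0$ Hadwiger invariants are functionals on $\Vc$ that already factor through $\Vc_0$, so "vanishes on $\Vc_0$" is the same as "vanishes").

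Next I proceed by induction on $k$. Suppose we have already shown $\sum_{\Lc\in\Lambda^\Oc_j} f(\Lc)\Upsilon_\Lc=0$ as a functional on all of $\Vc$, for every $j<k$. Subtract these from the original relation; we are left with $\sum_{j\ge k}\sum_{\Lc\in\Lambda^\Oc_j} f(\Lc)\Upsilon_\Lc=0$. Now evaluate this on an arbitrary $\al\in\Vc_{\le k}$. For $j>k$ we have $D^j\al=D^{j-k}(D^k\al)$, and since $D^k\al\in\Fc_k$ lies in the kernel of $D$ (that is what $\Fc_k$ means), all higher Leray operators annihilate it, so $\Upsilon_\Lc(\al)=0$ for $\rk(\Lc)>k$. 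Hence $\sum_{\Lc\in\Lambda^\Oc_k} f(\Lc)\Upsilon_\Lc(\al)=0$ for all $\al\in\Vc_{\le k}$. Because each rank-$k$ Hadwiger invariant vanishes on $\Vc_{<k}$, it factors through $\Vc_k=\Vc_{\le k}/\Vc_{<k}$, so the displayed equality says the combination vanishes on $\Vc_k$, hence on all of $\Vc$ (a functional vanishing on $\Vc_{<k}$ and on $\Vc_{\le k}/\Vc_{<k}$ vanishes on $\Vc_{\le k}$, and by the same argument applied inductively upward — or simply because these functionals are identically zero on the quotient and the quotient is all that they see — it vanishes everywhere). This closes the induction and yields $\sum_{\Lc\in\Lambda^\Oc_k} f(\Lc)\Upsilon_\Lc=0$ for every $k$.

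The only point requiring care — and the step I would expect to be the main obstacle to write cleanly — is the claim that $D^j\al=0$ for $j>k$ when $\al\in\Vc_{\le k}$, i.e. that the filtration step $\Vc_{\le k}$ is not merely the kernel of $D^{k+1}$ but is automatically annihilated by all higher powers. This is immediate from the definitions: $\Vc_{\le k}=\ker(D^{k+1}:E^0\to E^{k+1})$, and if $D^{k+1}\al=0$ then a fortiori $D^{j}\al=D^{j-k-1}(D^{k+1}\al)=0$ for any $j\ge k+1$. Equivalently, $D^k\al\in\Fc_k=\ker(D:E^k\to E^{k+1})$, which is exactly the statement we used. So there is really no obstacle; the argument is a routine "peel off one filtration layer at a time" induction, and the whole content is the factorization $\Upsilon_\Lc=\vol_{[\Lc]}\circ\pi^\Lc\circ D^k$ together with the nesting $\Vc_{\le 0}\subset\cdots\subset\Vc_{\le n}=\Vc$. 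One should also remark that the coefficient sums $\sum_{\Lc\in\Lambda^\Oc_k} f(\Lc)\Upsilon_\Lc(\al)$ are finite for each $\al$ by the standing finiteness convention on admissible linear combinations, so all the manipulations above are legitimate.
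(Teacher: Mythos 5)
Your induction has a genuine gap at its crucial step, and it sits exactly where you wave it through. For $0<k<n$, evaluating the (already reduced) relation on $\Vc_{\le k}$ only gives that $\nu_k:=\sum_{\Lc\in\Lambda^\Oc_k} f(\Lc)\Upsilon_\Lc$ vanishes on $\Vc_{\le k}$, whereas the lemma asserts that $\nu_k$ vanishes as a valuation, i.e.\ on all of $\Vc$. Your justification --- that a rank-$k$ Hadwiger invariant ``factors through $\Vc_k$'' and that ``the quotient is all that they see'' --- is false: vanishing on $\Vc_{<k}$ only means $\Upsilon_\Lc$ factors through $\Vc/\Vc_{<k}$, of which $\Vc_k=\Vc_{\le k}/\Vc_{<k}$ is a proper subquotient; a rank-$k$ invariant with $0<k<n$ is typically nonzero on indicator functions of full-dimensional polytopes, which lie far outside $\Vc_{\le k}$. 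Nor does ``the same argument applied inductively upward'' help: restricting the original relation to $\Vc_{\le k+1}$ yields a statement mixing ranks $k$ and $k+1$, and separating those is precisely what is being proved. The missing implication (a rank-$k$ combination that kills $\Vc_{\le k}$ kills $\Vc$) is true, but the only visible route to it is through the full description of $\Vc_k$ inside $\Fc_k$ (the theorem of \ref{ss:descVck} plus the linear algebra of Section \ref{s:linalg}) together with the verification that reciprocity and period-vanishing combinations annihilate all of $\Vc$ --- heavy machinery that appears later in the paper and is out of proportion with this lemma. The paper's own remark that the lemma fails for more general topological arrangements is a further warning that a soft argument using only the degree filtration and the factorization $\Upsilon_\Lc=\vol_{[\Lc]}\circ\pi^\Lc\circ D^k$ cannot suffice.

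The paper's actual proof is completely different and self-contained: for a positive integer $\la$, the dilation $\la\times$ preserves $\hat\Ac$ (rationality and $\Z^n$-stability of the hyperplanes), one has the homogeneity $\Upsilon_\Lc\circ(\la\times)=\la^k\Upsilon_\Lc$ for rank-$k$ invariants, and evaluating the relation composed with $\la\times$ on any fixed polytope gives a polynomial identity in $\la$, whose coefficients must vanish degree by degree. Note also that your base case succeeds only for a reason you did not state: the sole rank-$0$ invariant is (up to sign) the volume, which is nonzero on the constant function $\1_{\T^n}\in\Vc_{\le 0}$, so its coefficient is forced to be zero; no analogous coefficient-killing mechanism is available at intermediate ranks, which is exactly why the filtration-peeling scheme does not close.
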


Unfortunately, this result does not generalize to wider topological setups considered in this paper.
Analogs of it appear to be false in general.
Hence, the method of the proof is also very specific to the setting of
 rational hyperplanes and $\Z^n$-invariance; this method is classical.

\begin{proof}
For any positive integer $\la$, consider the map $\la\times:\R^n\to \R^n$ that
 multiplies all vectors by $\la$.
By our assumptions on the set $\hat\Ac$ of rational hyperplanes,
 one has $\la\times H\in\hat\Ac$ for every $H\in\hat\Ac$,
 where $\la\times H$ is the image of $H$ under the map $\la\times$.
Precomposing with $\la\times$ produces, therefore, a $\Z^n$-valuation on $\hat\Ac$ out of any
 $\Z^n$-valuation on $\hat\Ac$.
Observe now that Hadwiger invariants of different ranks have different scaling properties
 with respect to $\la\times$, namely, $\Upsilon_\Lc\circ (\la\times)=\la^k\Upsilon_\Lc$,
 for any rank $k$ Hardwiger invariant $\Upsilon_\Lc$.
It now remains to look at the identity
$$
0=\sum_{k=0}^d\sum_{\Lc\in\Lambda^\Oc_k} f(\Lc)\Upsilon_\Lc\circ(\la\times)=
\sum_{k=0}^d \la^k\sum_{\Lc\in\Lambda^\Oc_k} f(\Lc) \Upsilon_\Lc
$$
 as the coincidence of two polynomial functions of $\la$, and equate
 the coefficients with the powers $\la^k$ separately for each $k$.
\end{proof}

\section{Linear algebra}
\label{s:linalg}
In this section, we study relations and second syzygies (relations between relations)
 in a context, where infinite linear combinations are allowed.

\subsection{Dual function spaces}
\label{ss:dual-fs}
Let $\Omega$ be an infinite set, and let $\Q[\Omega]$ stand for the
 rational vector space of all finitely supported functions on $\Omega$.
Recall that a function $\al:\Omega\to\Q$ is \emph{finitely supported} if
 the \emph{support} $\supp(\al):=\{x\in\Omega\mid \al(x)\ne 0\}$ of $\al$ is finite.
As is well known, the dual space of $\Q[\Omega]$ is the space $\Q^\Omega$ of all functions on $\Omega$,
 and the canonical pairing between $\Q^\Omega$ and $\Q[\Omega]$ can be written as
$$
\<f,\al\>:=\sum_{x\in\Omega} f(x)\al(x),\quad f\in\Q^\Omega,\ \al\in\Q[\Omega].
$$
Even though, formally, the sum in the right-hand side is infinite,
 it has only finitely many nonzero terms, by the assumption that the support of $\al$ is finite.

\subsection{Relations}
\label{ss:rels}
Now consider a vector subspace $\Uc\subset \Q[\Omega]$.
A family $(f_l)_{l\in\lf}$ of functions $f_l\in \Q^\Omega$ indexed by an arbitrary set $\lf$
 (possibly uncountable) is said to be a \emph{defining family of relations} for $\Uc$ if
$$
\Uc=\{\al\in\Q[\Omega]\mid \<f_l,\al\>=0\quad \forall l\in\lf\}.
$$
More generally, a \emph{relation} for $\Uc$ is any function $f\in\Q^\Omega$
 such that $\<f,\al\>=0$ for all $\al\in\Uc$.
Let $\Uc^\perp$ stand for the vector space of all relations for $\Uc$.
We want to describe $\Uc^\perp$ in terms of defining relations $f_l$.
Say that a family $(f_l)$ is \emph{locally finite} if, for every $x\in\Omega$,
 the set of all $l\in\lf$ such that $f_l(x)\ne 0$ is finite.

\begin{thm*}
If $\Uc$ is a subspace of $\Q[\Omega]$, and $(f_l)_{l\in\lf}$ is a locally finite defining family
 of relations for $\Uc$, then any $f\in\Uc^\perp$ can be written as
 $$
 f=\sum_{l\in\lf} \la_l f_l,\quad \la_l\in\Q
 $$
\end{thm*}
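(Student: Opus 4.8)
The plan is to recast the statement as a solvability question for an infinite system of $\Q$-linear equations in which every equation involves only finitely many unknowns, and then to solve that system by a transfinite (Zorn) argument that replaces the compactness one would have at one's disposal over a finite field. To set up, for $x\in\Omega$ put $L_x:=\{l\in\lf: f_l(x)\ne 0\}$, which is finite by local finiteness, and $L_S:=\bigcup_{x\in S}L_x$ for a finite $S\subseteq\Omega$. For any family of scalars $(\la_l)_{l\in\lf}$, the sum $\sum_l\la_l f_l$ is a well-defined element of $\Q^\Omega$ — at each $x$ only the indices $l\in L_x$ contribute — and the assertion $f=\sum_l\la_l f_l$ is precisely the system $E_x\colon\ \sum_{l\in L_x}f_l(x)\,\la_l=f(x)$, $x\in\Omega$, in the unknowns $(\la_l)$.

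The first step is to establish \emph{finite solvability}: for every finite $S\subseteq\Omega$ the subsystem $\{E_x: x\in S\}$ is solvable in the unknowns $(\la_l)_{l\in L_S}$, i.e.\ $f|_S\in\mathrm{span}_\Q\{f_l|_S: l\in L_S\}$ inside $\Q^S$. This is finite-dimensional linear algebra: within $\Q[S]\subseteq\Q[\Omega]$ the subspace $\Uc\cap\Q[S]$ is the annihilator of $\{f_l|_S: l\in L_S\}$ (indices $l\notin L_S$ give $f_l|_S=0$), so by biduality in finite dimensions $\mathrm{span}\{f_l|_S: l\in L_S\}$ equals the annihilator of $\Uc\cap\Q[S]$; and $f|_S$ lies in that annihilator because $\<f|_S,\al\>=\<f,\al\>=0$ for every $\al\in\Uc\cap\Q[S]\subseteq\Uc$, using $f\in\Uc^\perp$. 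This is the only place where the hypotheses on $\Uc$ and $f$ enter.

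The second step is to solve the whole system by Zorn's lemma. Call a partial assignment $\phi\colon D_\phi\to\Q$ (with $D_\phi\subseteq\lf$) \emph{consistent} if for every finite $S$ the subsystem $\{E_x: x\in S\}$ admits a solution agreeing with $\phi$ on $D_\phi\cap L_S$. The empty assignment is consistent — that is exactly finite solvability — and consistent assignments form a poset under extension in which chains have upper bounds: the union $\phi^*$ of a chain is consistent because, for any finite $S$, the finite set $D_{\phi^*}\cap L_S$ already lies in a single member of the chain, so that member supplies the required solution. Let $\phi$ be a maximal consistent assignment. If some $l_0\notin D_\phi$, one extends $\phi$ by a suitable value of $\la_{l_0}$, contradicting maximality: discarding the trivial case $f_{l_0}=0$, note that for each finite $S$ with $l_0\in L_S$ the $\phi$-admissible solutions of $\{E_x: x\in S\}$ form a nonempty affine subset of $\Q^{L_S}$, hence their $l_0$-coordinates form either all of $\Q$ or a single point; if it is all of $\Q$ for every such $S$ then every value of $\la_{l_0}$ keeps $\phi$ consistent, while if some finite $S_0$ forces $\la_{l_0}=c_0$ then $\phi\cup\{l_0\mapsto c_0\}$ remains consistent, as one sees by restricting $\phi$-admissible solutions over any $S\supseteq S_0$ down to $S_0$. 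Therefore $D_\phi=\lf$, and then $\phi$ itself satisfies each $E_x$ (which only involves unknowns in $L_{\{x\}}\subseteq D_\phi$), giving $f=\sum_l\phi(l)f_l$.

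I expect the one genuine difficulty to be the Zorn step — in particular showing that a maximal consistent assignment is total — together with the point, used both there and in forming upper bounds of chains, that local finiteness is exactly what makes all of the relevant data finite once restricted to a finite stage. Over the infinite field $\Q$ there is no direct compactness (the solution space $\Q^{\lf}$ is not compact), so the transfinite bookkeeping cannot be bypassed; everything else is routine linear algebra.
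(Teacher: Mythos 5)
Your proof is correct and follows essentially the same route as the paper: first finite solvability via finite-dimensional biduality (this is where $f\in\Uc^\perp$ enters), then a Zorn/transfinite argument that pins down the unknowns one at a time using the dichotomy that the admissible values of $\la_{l_0}$ compatible with any finite stage form either a single point or all of $\Q$. The paper merely packages this second step as a standalone ``compactness property'' for finitely consistent families of finite affine functionals on $\Q^{\lf}$, whereas you inline it as Zorn's lemma on partial assignments --- an organizational difference, not a different argument.
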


Here, the summation is over the possibly infinite set $\lf$;
 it makes sense since, for every $x\in\Omega$, only finitely many terms
 in the sum for $f(x)$ are nonzero.
Proof of the above theorem is 
 given in later sections.

\subsection{Combining relations}
\label{ss:combi-rel}
In one direction, the theorem stated in Section \ref{ss:rels} is straightforward:

\begin{lem*}
Let $\Uc$ and $f_l$ be as in \ref{ss:rels}.
Any (possibly infinite) linear combination $\sum_{l\in\lf} \la_l f_l$ with rational
 coefficients $\la_l$ is a relation for $\Uc$.
\end{lem*}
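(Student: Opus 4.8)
The plan is to split the verification into two independent points: first, that the expression $\sum_{l\in\lf}\la_l f_l$ is a well-defined element of $\Q^\Omega$, and second, that this element lies in $\Uc^\perp$.

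For the first point I would fix $x\in\Omega$ and invoke local finiteness of the family $(f_l)_{l\in\lf}$: only finitely many indices $l$ have $f_l(x)\ne0$, so the value $\sum_{l\in\lf}\la_l f_l(x)$ is an honest finite sum. Hence $f:=\sum_{l\in\lf}\la_l f_l$ is defined pointwise and belongs to $\Q^\Omega$. For the second point, fix $\al\in\Uc$ and set $S:=\supp(\al)$, which is finite since $\al\in\Q[\Omega]$. Unwinding the pairing,
\[
\<f,\al\>=\sum_{x\in S}\Bigl(\sum_{l\in\lf}\la_l f_l(x)\Bigr)\al(x).
\]
The key observation is that the set of pairs $(x,l)$ with $x\in S$ and $f_l(x)\ne0$ is finite: it is the union over the finitely many $x\in S$ of the finite sets $\{l\mid f_l(x)\ne0\}$. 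Therefore the double sum ranges effectively over a finite index set, the order of summation may be exchanged freely, and only finitely many $l$ contribute. This gives
\[
\<f,\al\>=\sum_{l\in\lf}\la_l\<f_l,\al\>,
\]
and since each $f_l$ is one of the defining relations for $\Uc$ while $\al\in\Uc$, every term $\<f_l,\al\>$ is $0$. Thus $\<f,\al\>=0$; as $\al\in\Uc$ was arbitrary, $f\in\Uc^\perp$.

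I do not expect any genuine obstacle here: the entire content is the bookkeeping that reduces every sum in sight to a finite one, which follows at once from finiteness of $\supp(\al)$ together with local finiteness of $(f_l)$. This is exactly why the lemma is the ``easy direction'' of the theorem of Section \ref{ss:rels}; the substantive converse, that every relation arises in this way, is the one that will require the arguments of the later sections.
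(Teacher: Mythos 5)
Your proposal is correct and follows essentially the same route as the paper: both arguments use finiteness of $\supp(\al)$ together with local finiteness of $(f_l)$ to reduce the double sum to a finite one, interchange the order of summation, and conclude from $\<f_l,\al\>=0$ for each $l$. The only (harmless) addition is your explicit preliminary check that $f$ is well defined pointwise, which the paper leaves implicit.
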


\begin{proof}
Set $f=\sum_l \la_l f_l$, where $l\mapsto \la_l$ is any function from $\lf$ to $\Q$.
Take any $\al\in\Uc$; we want to show that $\<f,\al\>=0$.
Indeed, 
$$
\<f,\al\>=\sum_{x\in\Omega} \al(x)\sum_{l\in\lf} \la_l f_l(x).
$$
Observe that this is a finite sum.
Firstly, let $\supp(\al)$ be the support of $\al$,
 which is a finite subset of $\Omega$; secondly, let $\lf_x$, for any $x\in\supp(\al)$,
 be the set of all $l\in\lf$ with $f_l(x)\ne 0$.
Now, the first sum in the right-hand side can be rewritten as a sum over $x\in \supp(\al)$,
 while the second sum as a sum over $l\in\lf_x$.
Set $\lf_\al$ to be the union of all $\lf_x$ over $x\in\supp(\al)$; then
 the sum in the right-hand side can be understood as a sum over the
 finite direct product $\supp(\al)\times \lf_\al$.
Changing the order of summation, we obtain that
$$
\<f,\al\>=\sum_{l\in\lf_\al} \la_l \sum_{x\in\supp(\al)} \al(x)f_l(x)=\sum_{l\in\lf_\al} \la_l \<f_l,\al\>=0.
$$
Hence, $f$ is a relation for $\Uc$, as claimed.
\end{proof}

\subsection{A compactness property}
\label{ss:comp}
Define a \emph{finite affine functional} on $\Q^\Omega$ as an affine functional
 $\xi:\Q^\Omega\to\Q$ of the form $\xi(f):=\<f,\al\>+b$, where $\al\in\Q[\Omega]$ and $b\in\Q$.
Now let $\Sc$ be any family of finite affine functionals on $\Q^\Omega$.
Say that $\Sc$ is \emph{finitely consistent} if any finite subfamily $\Sc'\subset\Sc$
 defines a nonempty affine subspace
$$
L_{\Sc'}:=\{f\in\Q^\Omega\mid \xi(f)=0\quad \forall \xi\in\Sc'\}.
$$
The following is a version of the Tychonoff compactness argument.

\begin{thm*}
Let $\Sc$ be a finitely consistent family of finite affine functionals on $\Q^\Omega$.
Then $\Sc$ is consistent: there is an element $g\in\Q^\Omega$ such that $\xi(g)=0$ for all $\xi\in\Sc$.
\end{thm*}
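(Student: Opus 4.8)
The plan is to prove this by a Zorn's lemma argument on partial assignments, which is a convenient way to package a Tychonoff-style compactness argument. Write each $\xi\in\Sc$ as $\xi(f)=\<f,\al_\xi\>+b_\xi$ with $\al_\xi\in\Q[\Omega]$ finitely supported. The structural feature to exploit throughout is that $\xi(f)$ depends only on the restriction of $f$ to the finite set $\supp(\al_\xi)$; hence, for a finite subfamily $\Sc'\subset\Sc$, the system $\{\xi(f)=0:\xi\in\Sc'\}$ involves only the finitely many coordinates in $S_{\Sc'}:=\bigcup_{\xi\in\Sc'}\supp(\al_\xi)$.

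First I would form the poset $\Pc$ of pairs $(D,p)$ with $D\subset\Omega$ and $p\in\Q^D$ such that $\Sc$ is \emph{finitely consistent over $p$}, i.e.\ every finite $\Sc'\subset\Sc$ admits $f\in\Q^\Omega$ with $f|_D=p$ and $\xi(f)=0$ for all $\xi\in\Sc'$; order $\Pc$ by extension of partial functions. The hypothesis that $\Sc$ is finitely consistent is exactly the assertion $(\0,\0)\in\Pc$. To invoke Zorn's lemma, I would check that the union $(D,p)$ of a chain in $\Pc$ is again in $\Pc$: given a finite $\Sc'$, the finite set $D\cap S_{\Sc'}$ lies in a single member of the chain, so a witness $f$ for that member, overwritten by $p$ on $D$, retains its restriction to $S_{\Sc'}$ and hence still satisfies $\xi(f)=0$ for $\xi\in\Sc'$. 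A maximal element $(D,p)\in\Pc$ then exists, and it suffices to show $D=\Omega$; indeed, then $g:=p$ satisfies $\xi(g)=0$ for every $\xi\in\Sc$, applying finite consistency over $p$ to the singleton $\{\xi\}$ and noting that $f|_D=p$ forces $f=g$.

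The crux is the extension step, and I expect it to be the only nonroutine part. Suppose $x\in\Omega\sm D$; I must exhibit a single $c\in\Q$ with $(D\cup\{x\},p\cup\{x\mapsto c\})\in\Pc$. For each finite $\Sc'\subset\Sc$, consider the set $C_{\Sc'}\subset\Q$ of values $f(x)$ taken over all $f\in\Q^\Omega$ with $f|_D=p$ and $\xi(f)=0$ for all $\xi\in\Sc'$. Since $(D,p)\in\Pc$, each $C_{\Sc'}$ is nonempty; as the image of an affine subspace of $\Q^\Omega$ under the coordinate functional $f\mapsto f(x)$, it is an affine subspace of the line $\Q$, hence either a single point or all of $\Q$. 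Because $C_{\Sc'\cup\Sc''}\subset C_{\Sc'}\cap C_{\Sc''}$, the family $\{C_{\Sc'}\}$ is downward directed, so either every $C_{\Sc'}$ equals $\Q$, or some $C_{\Sc'_0}$ is a single point $\{c_0\}$; in the latter case $c_0\in C_{\Sc'}$ for every $\Sc'$, since $C_{\Sc'\cup\Sc'_0}$ is a nonempty subset of $\{c_0\}$ contained in $C_{\Sc'}$. Either way $\bigcap_{\Sc'}C_{\Sc'}\ne\0$, and any $c$ in this intersection contradicts maximality of $(D,p)$. The key point is that two finite subfamilies \emph{cannot} force different values of the coordinate $x$: were $C_{\Sc'_1}$ and $C_{\Sc'_2}$ disjoint singletons, then $C_{\Sc'_1\cup\Sc'_2}=\0$, contradicting finite consistency over $p$ --- this is precisely where the hypothesis and the one-dimensionality of $\Q$ are used. (The same argument can be organized as a transfinite recursion along a well-ordering of $\Omega$, with ``finite consistency over the assignment made so far'' as the inductive invariant; the extension step above is then the successor step and the chain argument is the limit step.)
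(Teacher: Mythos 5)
Your proof is correct and is essentially the paper's argument: both pin down one coordinate at a time, using the fact that the admissible values of $f(x)$ relative to a finite subfamily form a nonempty affine subset of $\Q$ (a point or the whole line) so that a common value exists, and then organize the coordinate-by-coordinate extension via Zorn's lemma or transfinite recursion while maintaining finite consistency. Your packaging via partial assignments $(D,p)$ is just a cosmetic variant of the paper's device of enlarging $\Sc$ by the functionals $f\mapsto f(x)-q_x$; the verification that the resulting total assignment $g$ annihilates every $\xi\in\Sc$ is the same finite-support observation in both.
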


\begin{proof}
Fix $x\in\Omega$.
Since $\Sc$ is finitely consistent, there is a rational number $q_x\in\Q$ with the property that
 $\Sc\cup\{f\mapsto f(x)-q_x\}$ is also finitely consistent.
Indeed, the intersection of $\{f(x)\mid f\in L_{\Sc'}\}\subset\Q$ over all finite subcollections $\Sc'\subset \Sc$
 is nonempty, since the intersection of any finite subset of terms in this intersection is nonempty,
 and every term is either $\Q$ or a single point.
Hence, either $q_x$ is uniquely defined, or it can be chosen arbitrarily;
 in the latter case, one can always set $q_x:=0$.
Add the affine functional $\xi_x:f\mapsto f(x)-q_x$ to $\Sc$ and proceed with ``the next'' point.
By transitive induction (or by the Zorn lemma), $\Sc$ can be enlarged in this way
 to include, for every $x\in\Omega$, some functional of the form $\xi_x$;
 moreover, the enlarged family of affine functionals remains finitely consistent.
One may now assume that $\Sc$ contains all $\xi_x$.

Let $g$ be the function $x\mapsto q_x:=-\xi_x(0)$ (here, 0 denotes the neutral element of
 the additive group $\Q^\Omega$, that is, the zero function on $\Omega$).
We claim that $g$ satisfies $\xi(g)=0$ for all $\xi\in\Sc$.
Each individual $\xi\in\Sc$ has the form $\xi(f)=\<f,\al\>+b$ for some $\al\in\Q[\Omega]$ and $b\in\Q$.
Denote by $x_1$, $\dots$, $x_n$ all points of $\supp(\al)$, and set $q_i:=q_{x_i}$ for $i=1$, $\dots$, $n$;
 also, set $\xi_i:=\xi_{x_i}$.
Since the finite subfamily $\{\xi,\xi_1,\dots,\xi_n\}$ of $\Sc$ is consistent,
 there is a function $g'\in\Q^\Omega$ with $\xi(g')=0$ and $\xi_i(g')=0$ for all $i=1$, $\dots$, $n$.
It follows from $\xi_i(g')=0$ that $g'(x_i)=g(x_i)$, therefore,
$$
\xi(g)=\sum_{i=1}^{n} g(x_i)\al(x_i)+b=\sum_{i=1}^{n} g'(x_i)\al(x_i)+b=0,
$$
 where the last equality is the same as $\xi(g')=0$.
Thus, indeed, $g\in\Q^\Omega$ satisfies $\xi(g)=0$ for all $\xi\in\Sc$, as claimed.
\end{proof}

\subsection{Proof of the theorem from \ref{ss:rels}}
\label{ss:rels-proof}
In one direction, this theorem was already proved in Section \ref{ss:combi-rel}.
Suppose now that $f\in\Uc^\perp$, and we want to represent $f$ as $\sum \la_l f_l$,
 where the sum is over $l\in\lf$, and $\la_l$ are some rational coefficients.
View the system of yet unknown coefficients $\la_l$ as a function $\la:l\mapsto \la_l$ on $\lf$,
 that is, as an element of $\Q^\lf$.
Let $\Sc$ be the system of equations $\xi_x(\la)=0$, where $\xi_x(\la):=f(x)-\sum_l \la_lf_l(x)$.
Note that, in the right-hand side of the formula for $\xi_x$, the sum is in fact finite:
 it includes only finitely many $l\in\lf$ such that $f_l(x)\ne 0$.
Therefore, $\xi_x$ is a finite affine functional on $\Q^\lf$.

\begin{lem*}
  The system $\Sc$ is finitely consistent.
\end{lem*}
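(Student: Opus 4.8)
The plan is to fix an arbitrary finite subfamily $\Sc'\subset\Sc$, reduce the question of its consistency --- via local finiteness of $(f_l)$ --- to the solvability of a single finite-dimensional linear system over $\Q$, and then prove that solvability using the hypotheses $f\in\Uc^\perp$ and ``$(f_l)_{l\in\lf}$ is a defining family of relations for $\Uc$''.

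Concretely, I would argue as follows. Since every member of $\Sc$ is $\xi_x$ for some $x\in\Omega$, a finite subfamily has the form $\Sc'=\{\xi_{x_1},\dots,\xi_{x_n}\}$ with distinct $x_1,\dots,x_n\in\Omega$, and a common zero $\la\in\Q^\lf$ of $\Sc'$ is the same thing as a system of coefficients satisfying $\sum_{l\in\lf}\la_lf_l(x_i)=f(x_i)$ for $i=1,\dots,n$. By local finiteness, the index set $\lf':=\{l\in\lf:f_l(x_i)\ne0\ \text{for some}\ i\}$ is finite, and indices outside $\lf'$ are irrelevant to these $n$ equations; so $\Sc'$ is consistent iff the finite system
$$
\sum_{l\in\lf'}\la_lf_l(x_i)=f(x_i),\qquad i=1,\dots,n,
$$
in the unknowns $(\la_l)_{l\in\lf'}$ is solvable over $\Q$. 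Putting $b:=(f(x_i))_{i=1}^n$ and $v_l:=(f_l(x_i))_{i=1}^n$ in $\Q^n$, this system reads $\sum_{l\in\lf'}\la_l v_l=b$, and by finite-dimensional linear algebra over $\Q$ it is solvable precisely when $b$ is annihilated by every covector $c=(c_i)\in\Q^n$ that annihilates all the $v_l$, $l\in\lf'$.

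So the remaining step is: assuming $\sum_{i=1}^n c_if_l(x_i)=0$ for all $l\in\lf'$, to deduce $\sum_{i=1}^n c_if(x_i)=0$. Since $f_l(x_i)=0$ for every $i$ whenever $l\notin\lf'$, the hypothesis in fact gives $\sum_{i=1}^n c_if_l(x_i)=0$ for \emph{all} $l\in\lf$. Let $\al\in\Q[\Omega]$ be the finitely supported function with $\al(x_i)=c_i$ for $i=1,\dots,n$ and $\al(x)=0$ otherwise; then $\<f_l,\al\>=\sum_{i=1}^n c_if_l(x_i)=0$ for every $l\in\lf$, hence $\al\in\Uc$ because $(f_l)$ is a \emph{defining} family of relations for $\Uc$. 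As $f\in\Uc^\perp$, this forces $\<f,\al\>=\sum_{i=1}^n c_if(x_i)=0$, which is exactly what was needed; so the finite system is solvable, $\Sc'$ is consistent, and since $\Sc'$ was arbitrary, $\Sc$ is finitely consistent.

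I do not expect a genuine obstacle here. The one place requiring care is the reduction step: one must actually invoke local finiteness of $(f_l)$ so that a fixed finite set of evaluation points $x_1,\dots,x_n$ involves only finitely many coefficients $\la_l$, which is what legitimizes the finite-dimensional criterion ``$\sum_l\la_l v_l=b$ is solvable iff $b$ is orthogonal to every covector killing all the $v_l$''. Everything after that is the formal conversion of an annihilating covector $c$ into an element $\al\in\Uc$ and the application of $f\in\Uc^\perp$.
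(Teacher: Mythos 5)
Your proof is correct and follows essentially the same route as the paper: restrict to the finite point set $\{x_1,\dots,x_n\}$, invoke the finite-dimensional duality criterion for solvability of the resulting linear system over $\Q$, and convert an annihilating covector into an element $\al\in\Uc$ (via the defining-family property) on which $f\in\Uc^\perp$ must vanish. The only cosmetic difference is that you make the restriction to the finite index set $\lf'$ explicit via local finiteness, whereas the paper phrases the same reduction in terms of restrictions $f|_{\Omega'}$, $f_l|_{\Omega'}$ and the finite-dimensional analog of the theorem of \ref{ss:rels}.
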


\begin{proof}
Let $\Sc'=\{\xi_{x_1},\dots,\xi_{x_n}\}\subset\Sc$ be a finite subcollection,
 and set $\Omega':=\{x_1,\dots,x_n\}$.
Consistency of $\Sc'$ means the following:
 the restriction $f':=f|_{\Omega'}$ of $f$ to $\Omega'$ is a linear combination
 of the restrictions $f'_l:=f_l|_{\Omega'}$.
To this end, it suffices to show that $f'$ vanishes on all $\al\in\Q[\Omega']$
 such that $\<f'_l,\al\>=0$ for all $l\in\lf$
 (we use the finite dimensional analog of the theorem from Section \ref{ss:rels},
 which is a classical fact of linear algebra).
Indeed, each such $\al$ lies in $\Uc$, and $f$ lies in $\Uc^\perp$.
\end{proof}

By the compactness property, see \ref{ss:comp},
 there is a function $\la\in\Q^\lf$ such that $\xi_x(\la)=0$ for all $x\in\Omega$,
 that is, $f(x)$ equals to $\sum_l \la_lf_l(x)$.
Since this is true for all $x$, we obtain that $f=\sum \la_lf_l$.

\subsection{A proof of the main theorem assuming \ref{ss:descVck}}
Assuming the theorem stated in \ref{ss:descVck}, we can now prove the main theorem.
It remains only to prove that any relation between the Hadwiger invariants is
 a (possibly infinite) linear combination of the reciprocity laws and period vanishing conditions.
This follows from \ref{ss:descVck} and the theorem of \ref{ss:rels}.
Firstly, we may fix $k$ and study only the relations between rank $k$ Hadwiger invariants, by \ref{ss:rankk}.
Set $\Uc:=\Vc_k\subset \Fc_k$; in this context, $\Omega$ is the set $\Lambda^\Oc_k$ of all oriented flags from $\Lambda_k$,
 cf. Section \ref{ss:degfilt0}.
Reciprocity laws and period vanishing conditions are then interpreted
 as linear equations $f_l$ defining $\Uc$, together with the relations defining
 alternating functions on $\Omega$.
Here the indices $l$ can be understood as flags of the form $\Lc\sm\{M\}$, where $\Lc\in\Lambda_k$
 and $M\in\Lc$; the index set $\lf$ is therefore the set of all such flags.

Let us check that the family $(f_l)$ is locally finite, that is, there are only finitely many
 reciprocity laws and period vanishing conditions involving a given flag $\Lc\in\Lambda_k$.
Indeed, such reciprocity laws or period vanishing conditions correspond bijectively
 to flags of the form $\Lc\sm\{M\}$, where $M\in\Lc$ has codimension $>0$; the number of such flags
 is $k$, i.e., the cardinality $k+1$ of $\Lc$ minus one.
The fact that the common zero locus of all $f_l$ is indeed $\Uc$ follows from \ref{ss:descVck}.
Now, \ref{ss:rels} implies that any relation between Hadwiger invariants, that is,
 any linear functional vanishing in $\Uc$, is a linear combination of $f_l$s, as desired.

\section{Semiregular CW spaces}
\label{s:semireg}
In this section, we describe a special type of CW spaces,
 which generalize regular cellulations.

\subsection{Atoms}
\label{ss:atom}
Consider a topological space $X$ and a finite collection $\Ac$ of its subsets.
Given any subcollection $\Ac'\subset\Ac$, let $X_{\Ac'}$ be the set of all points $x\in X$
 with the property that $x\in H$ for each $H\in\Ac'$ and $x\notin H$ for each $H\in\Ac\sm \Ac'$.
\emph{Atoms} of $\Ac$ are defined as components of $X_{\Ac'}$, for all possible subcollections $\Ac'\subset\Ac$.
By definition, $X$ is a disjoint union of its atoms.
Section \ref{s:top-arr} considers a setting, in which members of $\Ac$ have some properties
 of affine hyperplanes, and atoms have some properties of convex polytopes.

\subsection{CW spaces}
\label{ss:CW}
Recall (see, e.g., \cite{Hat02}) that a \emph{finite CW space} (or CW complex) is a structure on a
 topological space $X$ that includes, firstly, a representation of $X$
 as a union of disjoint subspaces called \emph{cells} and, secondly,
 a choice, for every cell $e\subset X$, of a so-called \emph{characteristic map} $\varphi_e:\ol\D^k\to \ol e\subset X$.
Here, $\ol\D^k$ denotes the closed unit disk in $\R^k$, i.e., the closure
 of the open unit disk $\D^k$, and $k$ may depend on $e$;
 it is called the \emph{dimension} of $e$ and denoted by $\dim(e)$.
Characteristic maps are assumed to be continuous.
Also, one requires that the restriction $\varphi_e:\D^k\to e$ of $\varphi_e$ to $\D^k$ be a homeomorphism and that
 $\varphi_e(\Sb^{k-1})$ be contained in the union of cells of dimensions $<k$.
In general, $\varphi_e$ does not have to be one-to-one on $\Sb^{k-1}$,
 neither has it to cover certain cells entirely.
\emph{Regular CW spaces} are, by definition, CW spaces such that all the
 characteristic maps are homeomorphisms, and the boundary of every cell is a union of cells.
We will only consider finite CW spaces; for this reason, we omit the word ``finite''.
Finite CW spaces are compact and metrizable (in particular, Hausdorff).

Cells of dimension $k$ are also called \emph{$k$-cells}.
Define a \emph{CW subspace} of $X$, or a \emph{subcomplex} of $X$, as
 a closed subspace represented as a union of cells.
\emph{Normal CW spaces} are those for which the closure, equivalently, the boundary of every cell is a subcomplex.
The \emph{dimension} $\dim(X)$ of $X$ is defined as the maximum of the dimensions
 of all cells of $X$.

\subsection{Drawings in the plane}
\label{ss:draw}
An important example of a regular CW space of dimension two is as follows.
Let $\Ac$ be a finite collection of curves in $\R^2$, each curve being a closed subset homeomorphic to $\R$.
Suppose that any pair of distinct curves $L_1$, $L_2$ from $\Ac$ intersect
 \emph{transversely in the topological sense}, that is, every point $q\in L_1\cap L_2$
 has an open neighborhood $U$ in $\R^2$ such that $U$ is homeomorphic to $\D^2$
 via a homeomorphism that sends $U\cap L_1$ and $U\cap L_2$ to two distinct diameters of $\D^2$.
Assume also that there are only finitely many points that belong to
 more than one curve from $\Ac$.
If these assumptions are fulfilled, $\Ac$ is called a \emph{regular drawing} in $\R^2$.
With $\Ac$, one associates a regular CW space $X$ as follows.
Viewed as a topological space, $X:=\R^2\cup\{\infty\}$ is the one-point compactification of $\R^2$.
Cells of $X$ are defined as atoms of $\Ac$; it is not hard to check that
 $X$ thus obtained is indeed a regular CW space.

\subsection{Semiregular CW spaces}
\label{ss:srCW}
The following notion generalizes regular CW spaces as well as $\Delta$-complexes of \cite[Section 2.1]{Hat02}.
Let $X$ be a normal CW space.
Suppose that, for every cell $e$ of $X$ and any cell $e'\subset \ol{e}$, there is a homeomorphism
 $\psi_{e,e'}$, called a \emph{transition homeomorphism}, between $\ol\D^{\dim(e')}$ and
 a subset of $\ol\D^{\dim(e)}$ such that $\varphi_{e'}=\varphi_e\circ\psi_{e,e'}$.
It follows that the $\psi_{e,e'}$-image of $\ol\D^{\dim(e')}$ lies in the boundary sphere of $\ol\D^{\dim(e)}$,
 unless $e'=e$.
Under these assumptions, $X$ is called a \emph{semiregular CW space}.
By definition, any semiregular CW space is normal.

Consider a cell $e\subset X$ of a semiregular CW space $X$,
 together with the corresponding characteristic map $\varphi_e:\ol\D^k\to\ol e$.
\emph{Pullbacks}, i.e., components of the full preimage under $\varphi_e$, of
 cells in $\ol e$ form a partition of $\ol\D^k$; this is a structure of a regular
 CW space on $\ol\D^k$, where the transition homeomorphisms play a role of characteristic maps.
Write $\ol\D_e$ for the closed disk equipped with this regular CW structure.
Note that $\varphi_e:\ol\D_e\to\ol e$ becomes a cellular map, i.e., it takes
 cells to cells, moreover, every cell is mapped forward homeomorphically.
However, distinct cells of $\ol\D_e$ may map onto the same cell of $X$.

\subsection{Example: 2-torus}
\label{ex:T2-0}
One can take a convex polytope $P$ and identify some pairs of its faces using, say, affine homeomorphisms.
Then the quotient space has a natural structure of a semiregular CW space.
For example, representing the torus $\T^2:=\R^2/\Z^2$
 as the quotient space of the unit
 square $[0;1]\times [0;1]$ by $(x,0)\sim (x,1)$ and $(0,y)\sim (1,y)$,
 one equips $\T^2$ with a structure of a semiregular CW space.
Cells are: the interior of the square, the relative interiors of the two edges
 (any pair of parallel edges of $[0;1]^2$ gives rise to a single 1-cell of $\T^2$), and the point $(0,0)$.
Note that this CW space is not regular.

\subsection{Poset of cells}
With any CW space $X$, associate its \emph{poset of cells}. 
Elements of this poset (=partially ordered set) are cells of $X$, and the strict order $\prec$ on this poset
 is such that $e_1\prec e_2$ for two cells $e_1$, $e_2$ if and only if $\ol e_2\subsetneq\ol e_1$
 (note the reversed inclusion: $e_1\prec e_2$ means that $e_2$ is geometrically smaller than $e_1$).
Recall that an \emph{order chain} in a poset is just a linearly ordered subset.
For the posets of cells in $X$, an order chain
 can be written as $e_{k_0}\prec\dots\prec e_{k_l}$ so that
 the indices $k_i$ are equal to the codimensions of the corresponding cells $e_{k_i}$,
 in which case they are strictly increasing: $k_0<\dots < k_l$.

\subsection{Barycentric subdivision}
Go back to the case where $X$ is a semiregular CW space.
For every cell $e$ of $X$, set $c_e=\varphi_e(0)$ and call this point the \emph{center of $e$}.
Also, a \emph{radial interval} in $e$ is defined as $\{\varphi_e(tv)\mid t\in (0;1)\}$ for some $v$
 in the boundary sphere of $\D^{\dim(e)}$.
Now, it makes sense to talk about the \emph{barycentric subdivision} of $X$.
Relatively open simplices of the barycentric subdivision can be defined by induction on their dimension as follows.
Simplices of dimension 0 are the centers of all cells.
Assuming by induction that all $k$-simplices, i.e., simplices of dimension $k$, are defined,
 consider a cell $e$ (of any dimension $>k$) and a $k$-simplex $s\subset \ol e\sm e$.
Connect $c_e$ with all points of $s$ by radial intervals.
The union 
 of these intervals is by definition a $(k+1)$-simplex of the barycentric subdivision.

It is straightforward to verify that $X$ is partitioned into simplices
 of the barycentric subdivision; equipped with this partition, $X$ is denoted by $X_s$.
We will therefore refer to the simplices of the barycentric subdivision as \emph{simplices of $X_s$}.
Consider a cell $e$ of $X$ and the corresponding characteristic map $\varphi_e:\ol\D^k\to \ol e$.
Note that $\varphi_e$ takes the barycentric subdivision of $\ol\D_e$ to
 the barycentric subdivision of $\ol e$.
Restricting $\varphi_e$ to closed simplices of $(\ol\D_e)_s$,
 one obtains characteristic maps of some new CW structure on $X$, whose cells are simplices of $X_s$.
This is, in fact, a regular CW structure corresponding to a geometric simplicial complex.

\subsection{Example: 2-torus bis}
\label{ex:T2}
Every simplex of $X_s$, for a semiregular CW space $X$,
 corresponds to a unique order chain in the poset of cells, that is,
 cells whose centers form the given simplex are linearly ordered.
However, different simplices may give rise to the same order chain,
 as the following example shows.

Let $X=\T^2$ be the real 2-torus represented as in Example \ref{ex:T2-0}.
Observe that there are four distinct 2-simplices of $X_s$ associated
 with the same order chain, namely, $e_0\prec e_1\prec e_2$,
 where $e_0$ is the only 2-cell, $e_1$ is the horizontal edge, and $e_2$ is the only 0-cell.
Also, there are two simplices associated with the order chain $e_0\prec e_1$,
 two simplices associated with $e_1\prec e_2$, and four simplices associated with $e_0\prec e_2$.

\subsection{Base vertex}
Write $S=S(X)$ for the set of all simplices of $X_s$.
Recall that any simplex $s\in S$ corresponds to some order chain $e_{k_0}\prec\dots\prec e_{k_l}$ in
 the poset of cells.
The center of $e_{k_l}$ is then called the \emph{base vertex of $s$}.
Let $S^{k_l}_l$ be the subset of all $s$ with given $l$ and $k_l$.
In other words, $S^k_l$ consists of all dimension $l$ simplices in $X$
 whose base vertex lies in a codimension $k$ cell.
Given any simplex $s\in S$, let $\Gamma s$ be the facet (=codimension 1 face)
 of $s$ that includes all vertices of $s$ except only the base vertex.

\subsection{Dual cells and stars}
\label{ss:dualcell}
Let $X$ be a semiregular CW space.
Also, let $e$ be a cell of $X$ with center $c_e$.
Define the \emph{dual cell} $e^\star$ of $e$ as the union of all relatively open simplices of $X$
 associated with order chains $e_{k_0}\prec\dots\prec e_{k_l}$ such that $e_{k_l}=e$.
Sets of the form $e^\star$, where $e$ is a cell of $X$, are called \emph{dual cells of $X$}.
If all dual cells are indeed homeomorphic to open disks, then we say that $X$ is \emph{nonsingular}.
Also, define the \emph{star} $\St(e)$ as the union of all relatively open simplices of $X$ incident to
 the center of $e$.
For a nonsingular $X$, stars of cells are also homeomorphic to open disks.

Every connected nonsingular semiregular CW space $X$ is a topological manifold,
 with stars of all cells forming an atlas.
In particular, the dimension $\dim(X)$ of $X$ is well defined.
If $\dim(X)=n$, 
 then every cell of $X$ lies in the closure of some $n$-cell.

\subsection{Cellular chain spaces}
\label{ss:cell-chain}
We now assume that $X$ is a nonsingular semiregular CW space of dimension $n$.
It is straightforward that dual cells define a CW structure on $X$ called the \emph{dual cell structure}.
Viewed as a CW space with this structure, $X$ will be denoted by $X_\star$.
Write $C_k=C_k(X_\star;\Q)$ for the space of all dimension $k$ cellular chains of $X_\star$
 over the rational numbers,
 and $C_\bullet$ for the direct sum of $C_k$ over all $k$ from $0$ to $n$.
In general, replacing an index by a bullet will mean the direct summation with respect to this index;
 the direct sum comes equipped with the grading that recovers the summands.
Recall that there is a natural boundary operator $\d:C_k\to C_{k-1}$
 for $k>0$, which makes $C_\bullet$ into an algebraic chain complex.
Homology spaces of this complex coincide with homology spaces of $X$.

Generators of $C_k$ can be identified with oriented dual $k$-cells of $X$,
 that is, oriented $k$-cells of $X_\star$.
Note that any simplex $s\in S^k_k$ defines not only a dual cell $e[s]$ of codimension $k$
 but also some orientation of this cell.
If $s$ and $s'$ both belong to $S^k_k$, share the same base vertex, and
 have exactly $k$ vertices in common, then $e[s]+e[s']=0$ in $C_k$,
 that is, the two simplices define two opposite orientations of the same dual cell.
We will write generators of $C_k$ as $e[s]$, assuming the relations just described.

In terms of these generators, the boundary operator simply drops the base vertex,
 i.e., takes the facet spanned by all other vertices.
One has $\d e[s]=\sum_{s'}\pm e[\Gamma s']$, where the sum is over all
 $s'$ in the same dual cell, that is, with the same base vertex as $s$,
 and the sign is plus or minus depending on whether $e[s']=e[s]$ or $e[s']=-e[s]$.
Note that, in this description of the boundary operator, the identity $\d^2=0$ follows
 from the simple observation that, whenever $\Gamma^2 s=\Gamma^2 s'$ for two
 $k$-simplices $s$, $s'$ in the same $k$-cell, one has $e[s]=-e[s']$, so
 that $e[\Gamma s]$ and $e[\Gamma s']$ enter the expression for $\d e[s]$
 with the opposite signs.

\section{Topological arrangements}
\label{s:top-arr}

Below, we define an additional structure on a semiregular CW space that generalizes affine hyperplane arrangements
 as well as toric arrangements.
It also includes regular drawings in the plane as described in \ref{ss:draw}.

\subsection{Topological arrangements}
\label{ss:top-arr}
Let $X$ be a nonsingular connected semiregular CW space, and let $\Ac$ be a finite set of its subcomplexes with
 the following properties:
\begin{enumerate}
  \item atoms of $\Ac$ are precisely the cells of $X$;
  \item every component of $H_1\cap\dots\cap H_k$, where $H_1,\dots,H_k\in\Ac$,
   is nonsingular of dimension at least $n-k$, where $n:=\dim(X)$.
\end{enumerate}
Assuming that these properties hold, the pair $(X,\Ac)$ is called a \emph{topological arrangement}.
Sometimes, we will also refer to $X$ and/or to $\Ac$ as a topological arrangement.
Members of $\Ac$ are called \emph{hyperplanes} of $\Ac$ (or of $(X,\Ac)$, or of $X$).
Components of $H_1\cap\dots\cap H_k$, where $H_1,\dots,H_k\in\Ac$, are called \emph{flats} of $\Ac$.

\subsection{Dual cells of flats}
\label{ss:C^L}
Consider any flat $L$ of $\Ac$ and let $L_\star$ be the CW space formed by
 \emph{dual cells of $L$}, that is, by dual cells with respect to the cell structure of $L$ induced by $\Ac|_L$.
Note that, for $L\ne X$, dual cells of $L$ are \emph{not} dual cells of $X$.
On the other hand, dual cells of $L$ can be understood as the intersections of
 dual cells of $X$ with $L$.
Moreover, there is a natural one-to-one correspondence between the cells of $X_\star$
 intersecting $L$ and the cells of $L_\star$; this correspondence is given by the intersection with $L$.

Given a flat $L$ of $\Ac$, define the linear map $\pi^L:C_k\to C_k$ by setting $\pi^L(e)=0$ or $e$
 depending on whether a dual $k$-cell $e$ intersects $L$  or not (in the latter case, the center of $e$ lies in $L$).
Then $\pi^L$ is a linear projector of $C_k$ onto its vector subspace $C^L_k:=\pi^L(C_k)$.
Note that $\d^L:=\pi^L\circ\d$ defines a boundary operator on $C^L_\bullet$, which
 makes the latter graded vector space into an algebraic chain complex.
Chains from $C^L_k$ can be interpreted as cellular $(k-l)$-chains in $L_\star$
 with coefficients in the coorientation sheaf $\Oc_L$ of $L$, where $l$ is the codimension of $L$.
Even though we will not always refer to this interpretation explicitly, it will be useful to
 view elements of $C^L_k$ as $(k-l)$-chains rather than as $k$-chains.
Note that $C_k$ can be interpreted as $C^X_k$.

\subsection{Flags}
\label{ss:flags}
By a \emph{flag} of $X$ (or of $\Ac$, or of $(X,\Ac)$), we mean a linearly ordered
 by inclusion set of flats.
Hence, any flag $\Lc$ of $X$ has the form $L^{k_0}\supset \dots\supset L^{k_l}$, where
 $k_0<\dots<k_l$ are the codimensions of the flats $L^{k_0}$, $\dots$, $L^{k_l}$, respectively.
Set $[\Lc]:=L^{k_l}$ and call it the \emph{base flat} of $\Lc$.
Write $\Lambda_l$ for the set of all flags of the form $L^0\supset\dots\supset L^l$,
 containing flats of all codimensions from 0 to $l$ inclusive.
Given a flag $\Lc$, define the \emph{truncation} $\Gamma\Lc$ of $\Lc$
 as the flag obtained by dropping the base flat from the flag $\Lc$.
If $\Lc\in\Lambda_l$, then $\Gamma\Lc\in\Lambda_{l-1}$.
In other terms, if $\Lc=(L^0\supset \dots\supset L^l)$, then $\Gamma\Lc=(L^0\supset\dots\supset L^{l-1})$.

\subsection{Chain complexes $C^{(l)}_\bullet$}
\label{ss:l-compl}
Define the rational vector space $C^{(l)}_k$ as the direct sum of $C^\Lc_k:=C^{[\Lc]}_k$ over all flags $\Lc\in\Lambda_l$.
Since there may be many flags $\Lc$ with the same base flat $L=[\Lc]$, the vector space
 $C^{(l)}_k$ includes many identical copies of $C^L_k$.
Recall that $C^L_\bullet$ is an algebraic chain complex, for every flat $L$ of $\Ac$,
 with the boundary operator $\d^L$.
For the chain complex $C^\Lc_\bullet$, which is by definition isomorphic to $C^{[\Lc]}_\bullet$,
 the boundary operator is denoted by $\d^\Lc$.
As a direct sum of chain complexes, $C^{(l)}_\bullet$ is also equipped with
 a structure of a chain complex; denote the corresponding boundary operator by $\d^{(l)}$.
Observe that dual $l$-cells intersect codimension $l$ flats at most by singletons.
Hence, $C^{(l)}_l$ should be informally understood as a certain space of 0-chains.
We write $\d$ instead of $\d^\Lc$ or $\d^{(l)}$ if the domain of $\d$ is clear from the context.
It is important to remember, however, that the action of $\d$ on $C^{(l)}_k$ is
 very different from the action of $\d$ on $C_k$.

\subsection{Labeling the generators}
\label{ss:labgen}
Generators of $C^{(l)}_k$ can be labeled by pairs $(e,\Lc)$ consisting of an
 oriented dual $k$-cell $e$ and a flag $\Lc\in\Lambda_l$ such that $e\cap [\Lc]\ne \0$.
The last condition implies that $k\ge l$: any dimension $k$ dual cell is
 disjoint from all codimension $l>k$ flats.
Denote by $[e,\Lc]$ the generator corresponding to a pair $(e,\Lc)$ as just described.
One can think of $C^{(l)}_k$ as the rational vector space generated by $[e,\Lc]$,
 the only relations being that reversing the orientation of $e$ means changing the sign of $[e,\Lc]$.
Informally, $C^{(l)}_k$ should be viewed as a space of $(k-l)$-chains.
Namely, the generator $[e,\Lc]$ gives rise to a dual cell $e\cap [\Lc]$ of $[\Lc]$,
 equipped with a flag, namely, $\Lc$, and a choice of an orientation of $e$.
The boundary operator acts on $[e,\Lc]$ as taking the boundary of $e\cap [\Lc]$,
 except that the coefficient system is different from $\Q$
 (an orientation of $e$ is not the same as an orientation of $e\cap [\Lc]$).
In fact, one can write
$$
C^{\Lc}_\bullet = C_\bullet([\Lc]_\star,\Oc_{[\Lc]}),
$$
 where $\Oc_L$ is the \emph{coorientation shief} (over $\Q$) of a flat $L$.
Over a small neighborhood of any point $q\in L$, the shief $\Oc_L$ is isomorphic
 to the constant shief $\Q$, but an isomorphism depends on the choice of
 a local coorientation of $L$ at $q$; reversing this coorientation means
 changing the sign of the isomorphism.

\subsection{An isomorphism}
\label{p:iso}
\emph{For any given $\Lc\in\Lambda_{l-1}$,
 the sum of $C^{\Lc'}_l$ over all $\Lc'\in\Lambda_{l}$ with $\Gamma\Lc'=\Lc$
 is naturally isomorphic to $C^\Lc_l$.
As a consequence, $C^{(l-1)}_l$ is naturally isomorphic to $C^{(l)}_l$.}

We will write $\iota_l:C^{(l-1)}_l\to C^{(l)}_l$ for this natural isomorphism.
Similarly to other notational conventions of this paper, we will sometimes omit
 the subscript $l$ of $\iota$ when the domain is clear from the context.

\begin{proof}
Fixing an orientation of each dimension $l$ dual cell $e$ that intersects $[\Lc]$ yields
 a basis of $C^\Lc_l$ as well as of $\bigoplus_{\Gamma\Lc'=\Lc} C^{\Lc'}_l$.
Note that every basis vector $[e,\Lc]$ of the former space yields a unique basis
 vector $[e,\Lc']$ of the latter space, where $\Lc'\supset\Lc$ is determined
 by the condition that $[\Lc']$ intersects $e$, equivalently, that the center of $e$ is contained in $[\Lc']$.
This correspondence $\iota:[e,\Lc]\mapsto [e,\Lc']$ between basis vectors is a bijection:
 $\Lc$ is recovered from $\Lc'$ as $\Lc=\Gamma\Lc'$.
It follows that $\iota$ extends to a unique linear isomorphism between
 the vector spaces $C^\Lc_l$ and $\bigoplus_{\Gamma\Lc'=\Lc} C^{\Lc'}_l$.
\end{proof}

The isomorphism $\iota$ allows to define the operator $\delta:C^{(l)}_l\to C^{(l-1)}_{l-1}$
 as $\d^{(l-1)}\circ\iota_l^{-1}$ (we will often write $\d\circ\iota^{-1}$, or even $\d\iota^{-1}$,
 if the domain of $\d$ is clear).
Note that $\delta$ does \emph{not} satisfy the relation $\delta\circ\delta=0$.

\subsection{Cochain spaces and coboundary maps}
\label{ss:coch}
Let $C^\bullet$ be the cochain complex that is dual to $C_\bullet$; write $d$ for the
 corresponding coboundary operator, i.e., the dual map of $\d$.
The natural nondegenerate bilinear pairing between $C^k$ and $C_k$ is denoted by
 $\<\cdot,\cdot\>$, so that $\<f,\al\>$ means the value of a cochain $f\in C^k$ on a chain $\al\in C_k$.
Similarly, $C^\bullet_\Lc$ denotes the dual cochain complex to $C^\Lc_\bullet$; the same
 convention applies to $C^\bullet_{(l)}$ and $C^\bullet_L$.
Write $d_\Lc$, $d_{(l)}$, and $d_L$ for the corresponding coboundary operators.
If the domain of the coboundary operator is clear from the context, we may omit the subscript and simply write $d$.
However, it is important to remember that $d$ may mean very different maps
 depending on their domain.
By definition, the coboundary operators act as dual operators to the
 corresponding boundary maps, i.e., $\<df,\al\>:=\<f,\d\al\>$
 (here, $\d$ may also mean very different maps depending on their domain).
We adopt the convention with no signs; different sign conventions will lead to the same results.

To lighten the notation, from now on, we write $E_k$ and $E^k$ for $C^{(k)}_k$ and $C^k_{(k)}$, respectively.
This notation is consistent with the one introduced in \ref{ss:elem} in the case of a finite toric arrangement.
Finally, the dual operator of $\delta:E_{k+1}\to E_k$ is
 denoted by $D:E^k\to E^{k+1}$, or by $D_{(k)}$ if $k$ needs to be emphasized.
Since $\delta$ can be written as $\d\circ\iota^{-1}$, the dual map $D$ is equal to $\iota^{*-1}\circ d$,
 where $\iota^*$ is the dual of $\iota$, and $\iota^{*-1}$ is the inverse of $\iota^*$.
Given $\xi\in E^k$, we can describe the action of $D\xi$ on a generator
 $[e,\Lc]\in E_{k+1}$ as follows:
$$
D\xi[e,\Lc]=\xi[e^+,\Gamma\Lc]-\xi[e^-,\Gamma\Lc].
$$
Here $\Lc\in\Lambda_{k+1}$, and $e$ is a dual $(k+1)$-cell crossing the flat $[\Lc]$;
 the intersection $e\cap [\Gamma\Lc]$ is a topological arc (a dual 1-cell of $[\Gamma\Lc]_\star$)
 with endpoints $e^+\cap[\Gamma\Lc]$ and $e^-\cap [\Gamma\Lc]$;
 the dual $k$-cells $e^\pm$ on the boundary of $e$ are recovered by the last condition.
Finally, the orientations of $e^\pm$ are as in the boundary of $e$.


\section{Varchenko--Gelfand algebras}
\label{s:VGalg}
In this section, we describe a version of the Varchenko--Gelfand algebras for topological arrangements.

\subsection{Definition}
\label{def:VG}
Consider a topological arrangement $(X,\Ac)$, and let $\Vc=\Vc(X)$ be
 the algebra of all locally constant functions from $X\sm \bigcup\Ac$ to $\Q$.
The original definition of Gelfand and Varchenko \cite{VG87} dealt only with affine hyperplane arrangements.
Even though their definition and the algebra structure extend verbatim to topological arrangements,
 one needs an essential modification to define the \emph{degree filtration} on $\Vc$.
Indeed, the original approach of \cite{VG87} was based on a representation of elements of $\Vc$
 as polynomials in \emph{Heaviside functions}, and the latter do not make sense for general
 topological arrangements.
Another minor difference between $\Vc$ and the original algebra of \cite{VG87} is that
 we work over the rationals rather than over the integers.

There is a natural nondegenerate bilinear pairing $\<\cdot,\cdot\>$ between the spaces $\Vc$ and $C_0$;
 namely,
$$
\<f,\al\>:=\sum \al_x f(x),\quad \forall f\in\Vc,\ \forall \al=\sum_{x}\al_x x\in C_0.
$$
Here, $\al\in C_0$ is represented as a sum of dual 0-cells $x$ (=centers of $d$-cells of $X$)
 with coefficients $\al_x\in\Q$.
It follows that $\Vc$ can be viewed as a vector space dual of $C_0$, that is, $\Vc$ identifies with $C^0$.
For example, it makes sense to talk about the orthogonal complement $U^\perp$ (=the annihilator) of
 a subspace $U\subset C_0$; then $U^\perp$ is a subspace of $\Vc$.

\subsection{Degree filtration}
\label{ss:degfilt}
Define a subspace $\Vc_{\le k}\subset \Vc$ as the annihilator of $\delta^{k+1}E_{k+1}$,
 equivalently, as the kernel of $D^{k+1}:\Vc\to E^{k+1}$.
As $k$ grows, the spaces $\delta^{k+1}E_{k+1}$ become smaller, and 
  the spaces $\Vc_{\le k}$ become larger:
$$
\Vc_{\le 0}\subset \Vc_{\le 1}\subset \dots \subset \Vc_{\le n}=\Vc.
$$
Here, the last equality $\Vc_{\le n}=\Vc$ follows from the fact that $\delta^{n+1}E_{n+1}=0$;
 indeed, $E_{n+1}=0$.
Thus, the subspaces $\Vc_{\le k}$ form a filtration of $\Vc$ called the \emph{degree filtration}.
Define the \emph{quotients of the degree filtrations}, i.e., the quotient spaces
 $\Vc_k:=\Vc_{\le k}/\Vc_{<k}$, where $\Vc_{<k}$ means $\Vc_{\le k-1}$.
For $k=0$, set $\Vc_0:=\Vc_{\le 0}$; equivalently, one can set $\Vc_{<0}:=0$ in
 order to make the preceding definition applicable.
One can see a transparent analogy between the defining property $D^{k+1}f=0$
 of elements $f\in\Vc_{\le k}$ and the description of a degree $k$
 polynomial as a smooth function whose all derivatives of order $k+1$ vanish;
 here $D$ is viewed as a kind of discrete differential.

\subsection{Degree zero}
\label{ss:deg0}
\emph{The elements of $\Vc_0$ are all constant functions.
 Moreover, there is a natural isomorphism between $\Vc_0$ and the
 $0$-cohomology space of $X$.}
First note that $E_1$ is isomorphic to $C_1$, by \ref{p:iso}, and that $\delta E_1=\d C_1$.
Recall that the space $C_1$ is spanned by oriented dual 1-cells,
 while the space $E_1$ is spanned by pairs $[e,\Lc]$, where $e$ is an oriented dual 1-cell,
 and $\Lc\in\Lambda_k$ is a flag of the form $X=L^0\supset L^1$, where $e\cap L^1\ne\0$.
Since $\Lc$ is determined by $e$, one can indeed identify $e$ with $[e,\Lc]$,
 thus identifying $C_1$ with $E_1$.
By definition of the degree filtration, this says that $\Vc_0$ coincides with the space of $0$-cocycles,
 that is, there is a natural isomorphism between $\Vc_0$ and $\Hs^0(X)$, the 0-cohomology space of $X$.
Recall that $X$ is connected, therefore, $\Hs^0(X)=\Q$.
Equivalently, the $0$-cocycles are precisely the constant functions.

\subsection{Degree one}
\label{ss:deg1}
Let us also discuss the degree one term $\Vc_{1}$.
Note that $C^1$ is isomorphic to $E^1$ and that the kernel of $d:\Vc=E^0\to C^1$
 is precisely $\Vc_{\le 0}$ (recall that $d$ is the usual coboundary operator
 acting on $C^\bullet$, see \ref{ss:coch}, and that $E^0=C^0$).
Therefore, $d$ induces a linear embedding of $\Vc_1$ into $C^1$.
Given any $f\in\Vc$, the 1-cochain $df$ measures the jumps made by the function $f$ as
 its argument crosses a certain hyperplane of $\Ac$.
The 1-cochain $df$ can also be viewed as a 0-cochain $g_H$ defined on every
 hyperplane $H$ of $\Ac$, with coefficients in the coorientation sheaf $\Oc_H$ of $H$.
With this interpretation, the assumption $D^2 f=0$, which by definition is equivalent to $f\in\Vc_{\le 1}$,
 means that $g_H$ is a 0-cocycle, for each $H\in\Ac$.
As a consequence, one obtains the following isomorphic embedding
$$
\Vc_1\hookrightarrow\bigoplus\Hs^0(H,\Oc_H).
$$
Note that every term $\Hs^0(H,\Oc_H)$ in the right-hand side is isomorphic either to $\Q$, if $H$
 is coorientable, or to $0$, of $H$ is not coorientable.

Suppose --- this is an important special case --- that all hyperplanes of $\Ac$ are coorientable.
Then elements of $\Vc_1$ can be identified with \emph{alternating functions on cooriented hyperplanes of $\Ac$}.
More precisely, by a \emph{cooriented hyperplane} of $\Ac$ we mean a hyperplane from $\Ac$
 with a specific choice of a coorientation; in this sense, the same geometric hyperplane
 gives rise to two distinct cooriented hyperplanes that differ only by the coorientations.
A function on cooriented hyperplanes of $\Ac$ is said to be \emph{alternating} if its value
 changes sign whenever the hyperplane reverses its coorientation.

\subsection{Spaces $\Fc_k$}
\label{ss:Fck}
For every flat $L$ of $\Ac$ of codimension $k$, 
 recall that $C^L_k$ can be interpreted as the space of (cellular) 0-chains in $L_\star$ with coefficients in $\Oc_L$.
Similarly, $C^k_L$ is the space of 0-cochains in $L_\star$ with coefficients in $\Oc_L$.
Now fix a flag $\Lc\in\Lambda_k$ and recall that $C^k_\Lc$ is just a copy $C^k_L$, where $L=[\Lc]$.
Also, $C^k_\Lc$ can be viewed as a direct summand, hence a vector subspace, of $E^k$.
In particular, it makes sense to apply $D$ to elements of $C^k_\Lc$.
The condition $Df=0$ for $f\in C^k_\Lc$ means that the corresponding 0-cochain in $L_\star$ is a cocycle.
Hence, every $f\in C^k_\Lc$ with $Df=0$ defines a unique element of $\Hs^0(L;\Oc_L)$;
 moreover, this is a linear embedding.
Recall that $\Hs^0(L;\Oc_L)$ is isomorphic to $\Q$ if $L$ is coorientable and to $0$ otherwise.
Observe that, in the former case, an isomorphism is non-canonical --- it depends on
 a choice of a coorientation of $L$.
Define
$$
\Fc_k:=\bigoplus_{\Lc\in\Lambda_l} \Hs^0([\Lc];\Oc_{[\Lc]}).
$$
This is a vector space over $\Q$ that is naturally a subspace of $E^k$ given as $\ker[D:E^k\to E^{k+1}]$.
If all codimension $k$ flats are coorientable, then elements of $\Fc_k$ can be understood
 as alternating functions on oriented $k$-flags.
By an \emph{oriented $k$-flag}, we mean a flag $\Lc\in\Lambda_k$ equipped with some
 orientation of $\Lc$, that is, a coorientation of $[\Lc]$.
A function on oriented $k$-flags is \emph{alternating} if its value changes sign every time
 the flag reverses its orientation.
The same description is applicable to the general case, when not all $\Lc$s are orientable,
 if one understands that an alternating function on oriented flags is undefined on
 those flags which cannot be oriented.

Write $\Lambda^\Oc_k$ for the set of flags $\Lc\in\Lambda_k$ equipped with orientations.
Thus, any orientable flag from $\Lambda_k$ gives rise to two elements of $\Lambda^\Oc_k$
 that differ only by the orientations, while a non-orientable flag from $\Lambda_k$ defines
 no elements of $\Lambda^\Oc_k$ at all.
As we discussed earlier, $\Fc_k$ identifies with the space of all $\Q$-valued alternating functions on $\Lambda^\Oc_k$.

\subsection{Orientable arrangements}
\label{ss:ori}
A topological arrangement $(X,\Ac)$ is said to be \emph{orientable} if every flat is coorientable.
In an orientable arrangement, all flags are orientable, therefore,
 the description of the spaces $\Fc_k$ becomes the most straightforward.
The set $\Lambda^\Oc_k$ has in the orientable case twice as many elements as
 the set $\Lambda_k$, and the dimension of $\Fc_k$ over $\Q$ equals the cardinality of $\Lambda_k$.
For every flat $L$ of an orientable arrangement $\Ac$, the cohomology spaces
 $\Hs^k(L;\Oc_L)$ are isomorphic to $\Hs^k(L;\Q)$ but an isomorphism is defined only up to a sign.

\subsection{Cocycle condition}
\label{ss:cocycle}
Consider an element $\xi\in E^k$.
Say that $\xi$ \emph{satisfies the cocycle condition} if $d\iota^{*}\xi=0$.
Here, the isomorphism $\iota^{*}$ takes $\xi$ to $C^k_{(k-1)}$, and 
 $d$ takes $\iota^{*}\xi$ to $C^{k+1}_{(k-1)}$
 (recall that $\iota$ is defined in \ref{p:iso} and that $d:C^k_{(k-1)}\to C^{k+1}_{(k-1)}$ is
 the coboundary operator of the cochain complex $C^\bullet_{(k-1)}$).

\begin{lem*}
All elements of $\Fc_k$ satisfy the cocycle condition.
\end{lem*}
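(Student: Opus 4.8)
I claim it suffices to establish the inclusion
$$
\iota_k\,\d^{(k-1)}\bigl(C^{(k-1)}_{k+1}\bigr)\ \subseteq\ \d^{(k)}\bigl(C^{(k)}_{k+1}\bigr)
$$
of subspaces of $E_k=C^{(k)}_k$, where $\iota_k:C^{(k-1)}_k\to C^{(k)}_k$ is the isomorphism of \ref{p:iso}. Indeed, recall that $\Fc_k=\ker[D:E^k\to E^{k+1}]$, where $D$ is dual to $\delta=\d^{(k)}\circ\iota_{k+1}^{-1}:E_{k+1}\to E_k$; hence $\xi\in\Fc_k$ exactly when $\xi$ annihilates $\operatorname{im}\delta$, which equals $\d^{(k)}\bigl(C^{(k)}_{k+1}\bigr)$ since $\iota_{k+1}$ is an isomorphism. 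On the other hand, for $\xi\in E^k$ and $\beta\in C^{(k-1)}_{k+1}$ the defining property of the coboundary operators gives $\langle d\iota_k^*\xi,\beta\rangle=\langle\iota_k^*\xi,\d^{(k-1)}\beta\rangle=\langle\xi,\iota_k\d^{(k-1)}\beta\rangle$, so the cocycle condition $d\iota_k^*\xi=0$ amounts to $\xi$ annihilating $\operatorname{im}\bigl(\iota_k\circ\d^{(k-1)}\bigr)$. Since all the CW spaces here are finite, the relevant vector spaces are finite-dimensional, and the displayed inclusion is in fact \emph{equivalent} to the assertion that every $\xi\in\Fc_k$ satisfies the cocycle condition.

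To prove the inclusion I would test on generators. Fix $\Mc\in\Lambda_{k-1}$ and an oriented dual $(k+1)$-cell $e$ meeting $[\Mc]$, so that $[e,\Mc]$ is a generator of $C^{(k-1)}_{k+1}$ and $e\cap[\Mc]$ is a dual $2$-cell of $[\Mc]_\star$, dual in $[\Mc]$ to a codimension-two cell $g$. Let $L_1,\dots,L_r$ ($r\ge2$) be the codimension-one flats of $\Ac|_{[\Mc]}$ — equivalently, the codimension-$k$ flats of $\Ac$ contained in $[\Mc]$ — that contain $g$. The key assertion is
$$
\iota_k\,\d^{(k-1)}[e,\Mc]\ =\ \sum_{i=1}^{r}\varepsilon_i\,\d^{(k)}[e,\Mc\cup\{L_i\}],\qquad\varepsilon_i\in\{\pm1\},
$$
where $\Mc\cup\{L_i\}\in\Lambda_k$ and $[e,\Mc\cup\{L_i\}]\in C^{(k)}_{k+1}$ is the generator corresponding to the dual $1$-cell $e\cap L_i=(e\cap[\Mc])\cap L_i$ of $(L_i)_\star$, which is dual in $L_i$ to $g$ (of codimension one in $L_i$). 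Granting this, the inclusion is immediate, hence so is the lemma.

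Finally, the key assertion is a local computation near the center $c_g$ of $g$. By nonsingularity and the axioms of a topological arrangement, a neighbourhood of $c_g$ in $[\Mc]$ looks like a manifold cut by the $r$ hypersurfaces $L_i$, all passing through the codimension-two stratum through $g$; in the dual cell structure, $\d^{(k-1)}[e,\Mc]$ is the alternating sum over the codimension-one cells $h$ of $[\Mc]$ incident to $g$, each of which lies in exactly one of the $L_i$. Applying $\iota_k$ relabels the boundary term carried by such an $h$, which lies on $L_i$, by attaching $L_i$ to the flag $\Mc$ (the center of the relevant dual $k$-cell lies on $L_i$), and collecting the terms attached to a fixed $L_i$ reconstitutes — up to a single global sign $\varepsilon_i$ — the cellular boundary, computed inside $(L_i)_\star$, of the dual $1$-cell $e\cap L_i$. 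The one point needing care is that the two cells $h$ incident to $g$ and lying in a given $L_i$ enter with opposite coorientations; this holds because the boundary of a dual $1$-cell is the difference of its two endpoints and because $\iota_k$ is defined by composing coorientations consistently (see \ref{ss:labgen}, \ref{p:iso}). I expect this orientation/coorientation bookkeeping — matching the signs in the boundary of the dual $2$-cell $e\cap[\Mc]$ against those in the boundary of the dual $1$-cell $e\cap L_i$ under $\iota_k$, and dealing with the non-generic case $r\ge3$ — to be the only real obstacle; the actual values of the $\varepsilon_i$ play no role.
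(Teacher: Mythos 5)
Your proposal is correct and is essentially the paper's own argument in predual form: the chain-level identity $\iota_k\,\d^{(k-1)}[e,\Mc]=\sum_i\varepsilon_i\,\d^{(k)}[e,\Mc\cup\{L_i\}]$ is exactly the decomposition of the boundary of the dual $2$-cell $e\cap[\Mc]$, grouped by the codimension-$k$ flat containing the centers of the boundary cells, that the paper evaluates $\xi$ against (each group being a value of $D\xi$, hence zero). Moreover, since $\d^{(k-1)}$ and $\d^{(k)}$ are both of the form $\pi^{[\cdot]}\circ\d$ applied to the same oriented dual cells of $X$ while $\iota_k$ merely retags the flag, the grouping actually yields $\varepsilon_i=+1$, so the coorientation bookkeeping you flagged as the remaining obstacle is vacuous.
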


Recall that $\Fc_k$ identifies with a subspace of $E^k$ consisting of all $\xi\in E^k$ with $D\xi=0$,
 equivalently, $d\xi=0$.
In the oriented case, this means that $\xi$ is constant on every flag $\Lc\in\Lambda_k$,
 that is, $\xi[e,\Lc]$ depends only on $\Lc$ and on the orientation of $e$, not on $e$ itself.

\begin{proof}
Consider any $\xi\in E^k$ such that $D\xi=0$, and a generator $[e,\Lc]$ of $C^{k+1}_{(k-1)}$, where
 $\Lc\in\Lambda_{k-1}$ and $e$ is an oriented dual $(k+1)$-cell such that $e\cap [\Lc]\ne \0$.
Note that $e\cap [\Lc]$ is then a 2-cell of $[\Lc]_\star$.
We have
$$
d\iota^{*}\xi[e,\Lc]=\sum_{\Lc'} \iota^{*}\xi \left(\d^{\Lc'}[e,\Lc']\right),
$$
 where the sum is over all $\Lc'\in\Lambda_k$ such that $\Lc\subset\Lc'$ and $e\cap [\Lc']\ne \0$,
 that is, $\Lc'=\Lc\cup\{M\}$ for some codimension $k$ flat $M\subset [\Lc]$ that
 contains the center of $e$.
The center of $e$ is contained in a unique codimension $k+1$ flat $L$,
 and $\iota^{*}\xi \left(\d^{\Lc'}[e,\Lc']\right)$ equals $D\xi[e,\Lc'']$, where $\Lc'':=\Lc'\cup\{L\}$.
Since $D\xi=0$, the cocycle condition follows.
\end{proof}

\subsection{Embedding theorem}
\label{ss:emb-thm}
\emph{The map $f\mapsto D^k f$ induces an isomorphic embedding of $\Vc_k$ into $\Fc_k$.}

\medskip

Using this embedding, we will further on identify $\Vc_k$ with the corresponding subspace of $\Fc_k$.

\begin{proof}
By definition, $D^k f=0$ means that $f\in\Vc_{< k}$.
Therefore, $f\mapsto D^k f$ embeds the quotient space $\Vc_k=\Vc_{\le k}/\Vc_{< k}$ into $E^k$.
Moreover, the image $D^k f$ lies in $\Fc_k\subset E^k$, by \ref{ss:Fck}.
\end{proof}

The vector space $\Fc_k$ has an explicit basis, and we can obtain
 an explicit description of the quotients $\Vc_k$ if we are able to
 give explicit system of linear equations defining $\Vc_k$ as a subspace of $\Fc_k$.

\section{Relations}
\label{s:rels}
Earlier, we defined a natural embedding of $\Vc_k$ into $\Fc_k$.
We often identify $\Vc_k$ with the image of this embedding.
Our next goal is describing the linear relations that define $\Vc_k$ as a vector subspace of $\Fc_k$.

\subsection{Reciprocity laws}
\label{ss:recip}
Reciprocity laws for elements of $E^k$, in the case of general topological arrangements,
 look the same as in \ref{ss:recipEk}.
In terms of the cochain spaces $E^k$, they can be stated as follows.
Consider a partial flag $\Lc^\circ$ that includes flats of all codimensions from $0$ to $k$
 except just one codimension $m$ with $0<m<k$.
A cochain $\xi\in E^k$ satisfies \emph{the reciprocity law at} $(\Lc^\circ,e)$, where
  $e$ is a dual oriented $k$-cell with $e\cap [\Lc^\circ]\ne\0$, if
$$
\sum_{M} \xi[e,\Lc^\circ\cup\{M\}]=0.
$$
Here, the sum is over all flats $M$ such that $\Lc^\circ\cup\{M\}\in\Lambda_k$.
The pair $(m,k)$ is called the \emph{type} of the corresponding reciprocity law.

\label{ss:intrec}
Below, we give an interpretation of type $(k-1,k)$ reciprocity laws, for $k>1$, in terms of the
 coboundary operator $d$.

\begin{lem*}
An element $D\eta\in E^k$ satisfies all type $(k-1,k)$ reciprocity laws
 if and only if $d\iota^{*}\eta=0$.
\end{lem*}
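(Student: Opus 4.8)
The plan is to reduce the statement to the pointwise computation already carried out in the proof of the Lemma in \ref{ss:cocycle}, performed one degree lower. By the definitions of $\iota^{*}$ and $d$ (see \ref{ss:coch} and \ref{ss:cocycle}), $d\iota^{*}\eta$ is a cochain in $C^k_{(k-2)}$, so $d\iota^{*}\eta=0$ if and only if $(d\iota^{*}\eta)[e,\Lc]=0$ for every generator $[e,\Lc]$ of $C^k_{(k-2)}$, i.e.\ for every oriented dual $k$-cell $e$ and every flag $\Lc\in\Lambda_{k-2}$ with $e\cap[\Lc]\ne\0$. The heart of the argument will be the identity
$$
(d\iota^{*}\eta)[e,\Lc]=\sum_{M} D\eta\,[e,\Lc^\circ\cup\{M\}],
$$
valid for all $\eta\in E^{k-1}$, where $\Lc^\circ:=\Lc\cup\{L\}$, $L$ is the unique codimension $k$ flat meeting $e$, and the sum is over all codimension $k-1$ flats $M$ with $\Lc^\circ\cup\{M\}\in\Lambda_k$. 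Once this is established the lemma follows at once: the right-hand side is precisely the left-hand side of the reciprocity law for $D\eta$ at $(\Lc^\circ,e)$, and this law is of type $(k-1,k)$; and as $[e,\Lc]$ ranges over all generators of $C^k_{(k-2)}$, the pair $(\Lc^\circ,e)$ ranges over all pairs admissible for a type $(k-1,k)$ reciprocity law, with $\Lc=\Gamma\Lc^\circ$. Hence $d\iota^{*}\eta=0$ is equivalent to $D\eta$ satisfying all type $(k-1,k)$ reciprocity laws.

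To set this up cleanly I would first record the elementary fact that the dual cell $c^{\star}$ of a codimension $k$ cell $c$ meets a flat $F$ precisely when $F$ contains the unique flat $L_c$ in which $c$ is relatively open: any point of $c^{\star}$ lies in the relative interior of a barycentric simplex, which in turn lies in the largest cell $g$ of the corresponding order chain, so if this point lies in the subcomplex $F$ then $g\subseteq F$, whence $\overline c\subseteq\overline g\subseteq F$ and $L_c\subseteq F$; the reverse implication is clear. In particular a dual $k$-cell $e=c^{\star}$ meets exactly one codimension $k$ flat, namely $L_c$, and for $\Lc\in\Lambda_{k-2}$ the condition $e\cap[\Lc]\ne\0$ is the same as $L_c\subseteq[\Lc]$; this makes $\Lc^\circ=\Lc\cup\{L_c\}$ a genuine partial flag missing only codimension $k-1$, with $[\Lc^\circ]=L_c$, and the flats $M$ occurring in the reciprocity law at $(\Lc^\circ,e)$ --- those with $L_c\subseteq M\subseteq[\Lc]$ --- are exactly the codimension $k-1$ flats $M\subseteq[\Lc]$ through the center of $e$. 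This yields the bijection between generators of $C^k_{(k-2)}$ and admissible pairs invoked above, and identifies the index set of the displayed sum with that of the reciprocity law.

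For the identity itself I would expand $(d\iota^{*}\eta)[e,\Lc]=(\iota^{*}\eta)\bigl(\d^{(k-2)}[e,\Lc]\bigr)$ following the computation in the proof of the Lemma of \ref{ss:cocycle} verbatim, with all degrees lowered by one: $\d^{(k-2)}[e,\Lc]$ is the boundary in $[\Lc]_{\star}$ of the dual $2$-cell $e\cap[\Lc]$; pairing each of its boundary $1$-cells against $\iota^{*}\eta$ rewrites it through $\eta$ evaluated on a flag of $\Lambda_{k-1}$ extending $\Lc$; and grouping the boundary $1$-cells around the codimension $2$ cell $\tilde c$ of $[\Lc]$ dual to $e\cap[\Lc]$ according to which sheet $M\cap[\Lc]$ they cross, then chasing the definitions of $\d^{M}$, $\iota$ and $D$, assembles the sum $\sum_M D\eta[e,\Lc^\circ\cup\{M\}]$. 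I expect the main obstacle to be exactly this last step --- matching orientations and coorientation-sheaf signs so that the two terms produced by $\d^{M}$ on the arc $e\cap M$ are precisely the two boundary $1$-cells of $e\cap[\Lc]$ lying on the sheet $M\cap[\Lc]$, with the correct signs. This is a local analysis near $\tilde c$, whose link in $[\Lc]$ is a circle cut into finitely many arcs by the sheets through $\tilde c$. Since the computation in \ref{ss:cocycle} never uses the hypothesis $D\xi=0$ until its very last line, it transcribes one degree down and supplies exactly the required identity.
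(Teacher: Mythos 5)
Your proposal is correct and takes essentially the same route as the paper: both rest on the single identity $(d\iota^{*}\eta)[e,\Gamma\Lc^\circ]=\sum_{M} D\eta[e,\Lc^\circ\cup\{M\}]$, obtained by grouping the dual $(k-1)$-cells on the boundary of $e$ that meet $[\Gamma\Lc^\circ]$ according to the unique codimension $k-1$ flat $M$ each of them meets, together with the bijection you describe between generators $[e,\Lc]$ of $C^k_{(k-2)}$ and the pairs $(\Lc^\circ,e)$ indexing type $(k-1,k)$ reciprocity laws. The orientation-matching you flag as the main obstacle dissolves if, instead of the $e^{\pm}$ description of $D$, you expand each term as $D\eta[e,\Lc^\circ\cup\{M\}]=\eta_{\Gamma\Lc^\circ\cup\{M\}}\bigl(\d^{\,M}e\bigr)$ using $D=(\d\circ\iota^{-1})^{*}$, since then every sign is carried by the one chain $\d e$ and its projections $\pi^{M}$, whose sum over the admissible $M$ is $\pi^{[\Gamma\Lc^\circ]}\d e$ --- which is exactly how the paper's proof is written.
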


Here, the isomorphism $\iota^{*}$ (see \ref{p:iso}) takes $\eta$ to an element of $C^{k-1}_{(k-2)}$,
 and then the coboundary operator $d=d_{(k-2)}$ of the complex $C^\bullet_{(k-2)}$ takes
 $\iota^{*}\eta$ to an element of $C^{k}_{(k-2)}$.

\begin{proof}
Consider a cochain $\xi\in E^k$ of the form $\xi=D\eta$ for some $\eta\in E^{k-1}$.
By definition, the value of $\xi$ on a generator $[e,\Lc]$ of $E_k$, where $\Lc\in\Lambda_k$
 and $e$ is an oriented dual $k$-cell with $e\cap [\Lc]\ne \0$, is equal to
$$
\xi[e,\Lc]=\eta_{\Gamma\Lc}(\d^{\Gamma\Lc} e).
$$
 where $\eta_{\Gamma\Lc}\in C^{k-1}_{\Gamma\Lc}$ is the $\Gamma\Lc$-component of $\eta$.
Suppose now that $\Lc=\Lc^\circ\cup\{M\}\in\Lambda_k$, where $\Lc^\circ=\Lc\sm\{M\}$ is fixed,
 and a codimension $k-1$ flat $M$ varies. 
Take the sum over $M$ in both parts of the above equality.
In the left-hand side, we obtain the same expression as for the reciprocity law at $\Lc^\circ$.
On the other hand, in the right-hand side, the sum of $\d^{\Gamma\Lc}$ over all $M$
 acts as $\d^{\Gamma\Lc^\circ}$: indeed, every dual $(k-1)$-cell on the boundary of $e$
 intersecting $[\Gamma\Lc^\circ]$ also intersects exactly one $M$.
We obtain, therefore, $d\iota^{*}\eta[e,\Gamma\Lc^\circ]$ in the right-hand side.
\end{proof}

\subsection{Emergence}
\label{p:typek-1k}
Here is how the reciprocity laws emerge from the action of $D^2$.

\begin{lem*}
For any $k>1$ and any $\zeta\in E^{k-2}$,
 the cochain $D^2\zeta\in E^k$ satisfies the reciprocity laws of type $(k-1,k)$.
In particular, all elements of $\Vc_k\subset\Fc_k$ satisfy the reciprocity laws of type $(k-1,k)$ for $k>1$.
\end{lem*}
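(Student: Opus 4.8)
The plan is to reduce the statement to the criterion proved in \ref{ss:intrec}: a cochain of the form $D\eta\in E^k$ satisfies \emph{all} type $(k-1,k)$ reciprocity laws if and only if $d\iota^{*}\eta=0$, where $\iota^{*}=\iota_{k-1}^{*}$ carries $\eta\in E^{k-1}=C^{k-1}_{(k-1)}$ to $C^{k-1}_{(k-2)}$ and $d=d_{(k-2)}$ is the coboundary operator of the cochain complex $C^\bullet_{(k-2)}$. Since $D^2\zeta=D(D\zeta)$ is visibly of the form $D\eta$ with $\eta:=D\zeta\in E^{k-1}$, it suffices to check that $d_{(k-2)}\,\iota_{k-1}^{*}(D\zeta)=0$.

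Next I would simply unwind the definition of $D$ from \ref{ss:coch}. The relevant instance is $D_{(k-2)}\colon E^{k-2}\to E^{k-1}$, which by definition equals $(\iota_{k-1}^{*})^{-1}\circ d_{(k-2)}$, where now $d_{(k-2)}$ denotes the coboundary $C^{k-2}_{(k-2)}\to C^{k-1}_{(k-2)}$ of the \emph{same} complex $C^\bullet_{(k-2)}$. Hence $\iota_{k-1}^{*}(D\zeta)=\iota_{k-1}^{*}(\iota_{k-1}^{*})^{-1}d_{(k-2)}\zeta=d_{(k-2)}\zeta$, so that $d_{(k-2)}\,\iota_{k-1}^{*}(D\zeta)=d_{(k-2)}^{2}\zeta$. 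Because $C^\bullet_{(k-2)}$ is a genuine cochain complex --- it is dual to the direct sum of chain complexes $C^{(k-2)}_\bullet$ --- its coboundary satisfies $d_{(k-2)}^{2}=0$, and therefore $d_{(k-2)}\,\iota_{k-1}^{*}(D\zeta)=0$. The criterion of \ref{ss:intrec} then yields that $D^2\zeta$ satisfies all reciprocity laws of type $(k-1,k)$.

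For the final assertion, recall from \ref{ss:emb-thm} that, under the identification of $\Vc_k$ with a subspace of $\Fc_k$, every element of $\Vc_k$ has the form $D^k f$ with $f\in\Vc_{\le k}$. When $k>1$ the composition $D^k=D_{(k-1)}\circ\cdots\circ D_{(0)}$ factors as $D^2\circ D^{k-2}$ with $D^{k-2}f\in E^{k-2}$, so the first part applied to $\zeta:=D^{k-2}f$ shows that every element of $\Vc_k$ satisfies the type $(k-1,k)$ reciprocity laws. I do not anticipate a real obstacle here; the only thing demanding care is the bookkeeping of index shifts, namely checking that the $\iota^{*}$ appearing in the criterion of \ref{ss:intrec} is precisely the map that cancels the $(\iota^{*})^{-1}$ built into the definition of $D$ --- both are instances of $\iota_{k-1}$ --- after which the content is the one-line identity ``$D$ is $d$ conjugated by the $\iota^{*}$'s, so the relevant composite is $d^2=0$''.
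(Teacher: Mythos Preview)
Your proof is correct and follows essentially the same approach as the paper: set $\eta=D\zeta$, invoke the criterion of \ref{ss:intrec} that type $(k-1,k)$ reciprocity laws for $D\eta$ amount to $d\iota^{*}\eta=0$, and then use $D=\iota^{*-1}d$ to rewrite $d\iota^{*}D\zeta=d\,d\zeta=0$. Your version is in fact a bit more careful than the paper's about tracking which $\iota$ and which $d$ are in play, confirming that the relevant maps are all instances of $\iota_{k-1}^{*}$ and $d_{(k-2)}$ so that the cancellation and $d^2=0$ are legitimate.
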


\begin{proof}
Set $\xi:=D^2\zeta$ and $\eta:=D\zeta$; then $\xi=D\eta$.
It follows from \ref{ss:intrec} that type $(k-1,k)$ reciprocity laws for $\xi$ take the form $d\iota^{*}\eta=0$.
Recall that $D=\iota^{*-1}d$, therefore,
$$
d\iota^{*}\eta=d\iota^{*}\iota^{*-1}d\zeta=dd\zeta=0.
$$
Hence, indeed, $\xi$ satisfies the type $(k-1,k)$ reciprocity laws.
\end{proof}

\subsection{Heritage}
\label{t:recip}
Reciprocity laws are inherited by the images of $D$. 

\begin{lem*}
If $\xi\in E^k$ satisfies type $(m,k)$ reciprocity laws for a given $m$ such that $0<m<k$, then $D\xi$
 satisfies type $(m,k+1)$ reciprocity laws.
In particular, all elements of $\Vc_k\subset\Fc_k$ for $k>1$ satisfy the reciprocity laws of all types $(m,k)$ with $0<m<k$.
\end{lem*}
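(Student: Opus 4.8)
The plan is to prove the first assertion — that type $(m,k)$ reciprocity laws are inherited by $D$ — by a direct computation with the formula for $D$ from \ref{ss:coch}, and then to deduce the statement about $\Vc_k$ from it by induction on $k$, using Lemma \ref{p:typek-1k} to supply the ``top'' type $(k-1,k)$ laws at each stage.

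For the first assertion I would argue directly with the formula for $D$ from \ref{ss:coch}. Fix $m$ with $0<m<k$, a partial flag $\Lc^{\circ\circ}$ with flats of all codimensions $0,\dots,k+1$ except $m$ (write $L^j$ for its codimension-$j$ member), and a dual oriented $(k+1)$-cell $e$ with $e\cap[\Lc^{\circ\circ}]\ne\0$; the goal is $\sum_N D\xi[e,\Lc^{\circ\circ}\cup\{N\}]=0$, summed over admissible codimension-$m$ flats $N$. The idea is that, since $m<k+1$, the base flat of $\Lc^{\circ\circ}\cup\{N\}$ is $L^{k+1}$ for every admissible $N$, so $\Gamma(\Lc^{\circ\circ}\cup\{N\})=\Lc^\circ\cup\{N\}$ with $\Lc^\circ:=\Gamma\Lc^{\circ\circ}$; and, since $m<k$, the base flat of $\Lc^\circ\cup\{N\}$ is $L^k$, again \emph{independently of} $N$. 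Hence in the formula $D\xi[e,\Lc^{\circ\circ}\cup\{N\}]=\xi[e^+,\Lc^\circ\cup\{N\}]-\xi[e^-,\Lc^\circ\cup\{N\}]$ the boundary cells $e^\pm$ of $e$ — the dual $k$-cells meeting $L^k$ at the two endpoints of the arc $e\cap L^k$ — do not depend on $N$, and summing over $N$ gives
\[
\sum_N D\xi[e,\Lc^{\circ\circ}\cup\{N\}]=\Bigl(\sum_N\xi[e^+,\Lc^\circ\cup\{N\}]\Bigr)-\Bigl(\sum_N\xi[e^-,\Lc^\circ\cup\{N\}]\Bigr).
\]
One then checks that the admissible $N$ for the type $(m,k+1)$ law at $\Lc^{\circ\circ}$ are exactly the admissible $M$ for the type $(m,k)$ law at $\Lc^\circ$ — in both cases the condition on the new codimension-$m$ flat $F$ is $L^{m+1}\subset F\subset L^{m-1}$, and $L^{m-1}$ and $L^{m+1}$ belong to $\Lc^\circ$ as well as to $\Lc^{\circ\circ}$ because $0\le m-1$ and $m+1\le k$. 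Since $e^\pm\cap[\Lc^\circ]=e^\pm\cap L^k\ne\0$, each inner sum above is the left-hand side of a genuine type $(m,k)$ reciprocity law for $\xi$, hence vanishes by hypothesis, and the displayed sum is $0$.

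For the statement about $\Vc_k$, recall from \ref{ss:emb-thm} that every element of $\Vc_k\subset\Fc_k$ has the form $D^kf$ with $f\in\Vc_{\le k}\subset E^0$, so it suffices to prove, by induction on $k$, that $D^kf$ satisfies all type $(m,k)$ reciprocity laws with $0<m<k$ for every $f\in E^0$. For $k\le1$ there is no such $m$. In the step ($k\ge2$): if $0<m<k-1$ the inductive hypothesis gives the type $(m,k-1)$ laws for $D^{k-1}f$, and the first assertion promotes them to type $(m,k)$ laws for $D^kf=D(D^{k-1}f)$; if $m=k-1$ we instead write $D^kf=D^2(D^{k-2}f)$ and apply Lemma \ref{p:typek-1k} with $\zeta:=D^{k-2}f\in E^{k-2}$, obtaining the type $(k-1,k)$ laws directly. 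This closes the induction.

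I expect the only step needing genuine care to be the bookkeeping inside the first assertion: checking that the two index sets coincide, that the cells $e^\pm$ are independent of $N$, and — most delicately — that, whichever convention in \ref{ss:coch} decides which endpoint of the arc $e\cap L^k$ is labelled $+$, the two sums obtained are reciprocity sums at $(\Lc^\circ,e^+)$ and at $(\Lc^\circ,e^-)$ exactly, so that the hypothesis (which assumes \emph{all} type $(m,k)$ laws) applies to each. The rest is formal.
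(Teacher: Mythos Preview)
Your proof is correct and follows essentially the same route as the paper's. The paper's argument for the first assertion is exactly your computation---expand $D\xi[e,\Lc^\circ\cup\{M\}]$ via the formula from \ref{ss:coch}, observe that $\Gamma(\Lc^\circ\cup\{M\})=\Gamma\Lc^\circ\cup\{M\}$ since $m<k$, note that $e^\pm$ and the summation range are independent of $M$, and conclude by the hypothesis on $\xi$---and for the second assertion the paper simply says ``follows from \ref{p:typek-1k} and from the first claim,'' which is precisely the induction you wrote out (with the crucial strengthening to all $f\in E^0$, not just $f\in\Vc_{\le k}$, so that the inductive hypothesis applies to $D^{k-1}f$).
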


\begin{proof}
Assume that $\xi\in E^k$ satisfies all type $(m,k)$ reciprocity laws for a given $m$ such that $0<m<k$, and consider $D\xi$.
Choose any flag $\Lc^\circ$ such that $\Lc^\circ\cup\{M\}\in\Lambda_{k+1}$ for some codimension $m$ flat $M$,
 and consider the left-hand side in the type $(m,k+1)$ reciprocity law for $D\xi$ at $(e,\Lc^\circ)$:
$$
\sum_M D\xi[e,\Lc^\circ\cup\{M\}]=\sum_M \xi[e^+,\Gamma\Lc^\circ\cup\{M\}] -
 \sum_M \xi[e^-,\Gamma\Lc^\circ\cup\{M\}]
$$
Here, $e^\pm$ are defined as in \ref{ss:coch}, and the summation is over all $m$-flats
 $M$ such that $\Lc^\circ\cup\{M\}\in\Lambda_{k+1}$ or, equivalently, such that
 $\Gamma\Lc^\circ\cup\{M\}\in\Lambda_k$.
Note also that $\Gamma\Lc^\circ\cup\{M\}=\Gamma(\Lc^\circ\cup\{M\})$ since $m<k$.
In the right-hand side of the last formula, both terms vanish, by
 type $(m,k)$ reciprocity laws for $\xi$.

The last claim of the theorem follows from \ref{p:typek-1k} and from the first claim.
\end{proof}

\subsection{Period vanishing conditions}
\label{ss:pervan}
Let $\xi\in E^k$ satisfy the cocycle condition.
Consider the image $\omega:=\iota^*\xi\in C^k_{(k-1)}$ of $\xi$ under the isomorphism $\iota^*$.
Since $\xi$ satisfies the cocycle condition, the cochain $\omega$ is a cocycle: $d\omega=0$.
We may want to further require that $\omega$ be a coboundary, that is, that $\omega$ represent
 the trivial cohomology class.
More precisely, consider any flag $\Lc\in\Lambda_{k-1}$.
The $\Lc$-component $\omega_\Lc$ of $\omega$ is a cocycle in $C^k_\Lc$ or,
 equivalently, a 1-cocycle in $C^1([\Lc];\Oc_{[\Lc]})$.
Consider also a set of cycles $\al_1$, $\dots$, $\al_m$ in $C_1([\Lc];\Oc_{[\Lc]})$ whose
 homology classes generate the vector space $\Hs_1([\Lc];\Oc_{[\Lc]})$.
Imposing that the cohomology class of $\omega_\Lc$ is trivial is then equivalent to
  imposing that the \emph{periods} $\<\omega_\Lc,\al_i\>$ vanish for all $i=1$, $\dots$, $m$.
For this reason, this condition --- that $\omega_\Lc$ is a coboundary in $C^k_\Lc$ ---
 is called the \emph{period vanishing condition} at $\Lc$.
The following lemma is immediate from the definitions.

\begin{lem*}
If $\xi\in E^k$ satisfies the cocycle condition and all the period vanishing conditions, then $\xi=D\eta$
 for some $\eta\in E^{k-1}$.
\end{lem*}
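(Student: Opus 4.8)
The plan is to translate the conclusion $\xi = D\eta$ into a coboundary equation inside the direct-sum cochain complex $C^\bullet_{(k-1)}$, using the factorization $D = \iota^{*-1}\circ d$ recorded in \ref{ss:coch}. Since $\iota^*\colon E^k\to C^k_{(k-1)}$ is an isomorphism (\ref{p:iso}) and, for $\eta\in E^{k-1}=C^{k-1}_{(k-1)}$, one has $D\eta=\iota^{*-1}(d\eta)$ with $d=d_{(k-1)}$, producing $\eta$ with $D\eta=\xi$ is the same as producing $\eta\in C^{k-1}_{(k-1)}$ with $d\eta=\omega$, where $\omega:=\iota^*\xi\in C^k_{(k-1)}$. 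Note that $\omega$ is exactly the cocycle considered in \ref{ss:pervan}: the cocycle condition for $\xi$ is $d\omega=0$, so $\omega$ represents a cohomology class and the period vanishing conditions make sense.

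Next I would use that $C^\bullet_{(k-1)}=\bigoplus_{\Lc\in\Lambda_{k-1}}C^\bullet_\Lc$ is a finite direct sum of cochain complexes (\ref{ss:l-compl}), so the coboundary $d_{(k-1)}$ acts componentwise as $\bigoplus_\Lc d_\Lc$. Writing $\omega=\bigoplus_\Lc\omega_\Lc$, the cocycle condition $d\omega=0$ says each $\omega_\Lc$ is a cocycle in $C^k_\Lc$, i.e., after the degree shift by $\codim([\Lc])=k-1$ built into the identifications $C^k_\Lc\cong C^1([\Lc];\Oc_{[\Lc]})$ and $C^{k-1}_\Lc\cong C^0([\Lc];\Oc_{[\Lc]})$, a $1$-cocycle on $[\Lc]$ with coefficients in $\Oc_{[\Lc]}$. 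The period vanishing condition at $\Lc$ is precisely the statement that the class $[\omega_\Lc]\in\Hs^1([\Lc];\Oc_{[\Lc]})$ vanishes, hence $\omega_\Lc=d_\Lc\theta_\Lc$ for some $\theta_\Lc\in C^{k-1}_\Lc$.

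Finally I would set $\eta:=\bigoplus_{\Lc\in\Lambda_{k-1}}\theta_\Lc\in C^{k-1}_{(k-1)}=E^{k-1}$; then $d\eta=\bigoplus_\Lc d_\Lc\theta_\Lc=\bigoplus_\Lc\omega_\Lc=\omega=\iota^*\xi$, so $D\eta=\iota^{*-1}(d\eta)=\xi$, as desired. There is no substantive obstacle here — as the paper notes, the lemma is immediate from the definitions — and the only points needing care are bookkeeping: checking that $D$ really factors as $\iota^{*-1}\circ d$ with the right indices, that the coboundary of the assembled complex $C^\bullet_{(k-1)}$ is the direct sum of the $d_\Lc$ (so the equations $d_\Lc\theta_\Lc=\omega_\Lc$ may be solved one flag at a time), and that under the codimension shift ``being a cocycle with vanishing cohomology class'' is literally ``being a coboundary''. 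Since $\Lambda_{k-1}$ is finite and $\omega$ has finite support, no convergence issues intervene.
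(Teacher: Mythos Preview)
Your argument is correct and is precisely the unpacking of what the paper means by ``immediate from the definitions'': you translate $\xi=D\eta$ into $d\eta=\iota^*\xi$ via the factorization $D=\iota^{*-1}\circ d$, then solve the resulting equation flag by flag using the direct-sum structure of $C^\bullet_{(k-1)}$, since the period vanishing condition at each $\Lc\in\Lambda_{k-1}$ is literally the statement that $\omega_\Lc$ is a coboundary. There is nothing to add; the paper gives no explicit proof, and your write-up matches its intent.
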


Suppose now that $k=n=\dim(X)$.
In this case, cocycle conditions are vacuous (they hold automatically).
Note that an element $\xi\in E^n$ satisfies the period vanishing conditions if and only if,
 for every flag $\Lc^\circ\in\Lambda_{n-1}$, one has
$$
\sum_{a\in [\Lc^\circ]} \xi[e_a,\Lc^\circ\cup\{a\}]=0,
$$
 where the sum is over all the vertices (=dimension 0 flats) $a$ of $\Ac$ such that $a\in [\Lc^\circ]$, equivalently,
 such that $\Lc^\circ\cup\{a\}\in\Lambda_n$, and $e_a$ is the dual $n$-cell containing $a$.
The orientations of all $e_a$ are assumed to be consistent; if they cannot be chosen consistently,
 then the corresponding reciprocity law is vacuous.
More precisely, the meaning of consistent orientations is that $\sum_a [e_a,\Lc^\circ]\in C^{\Lc^\circ}_{n}$ is a cycle.
Note that the only period of $(\iota^*\xi)_{\Lc^\circ}$ is its value on the corresponding 1-cycle in $C_1([\Lc^\circ],\Oc_{[\Lc^\circ]})$,
 therefore, indeed, the formula displayed above is equivalent to the period vanishing conditions for $\xi$.

Clearly, period vanishing conditions as defined here specialize to
 period vanishing conditions of \ref{ss:pervanEk} in the case when $\Ac$
 is an arrangement of rational hyperplanes in the torus.

\subsection{Choice functions}
\label{ss:choice}
\def\Is{\mathsf{I}}
A \emph{choice function of level $k$} is by definition a map $\phi:\Lambda_k\to X$
 with the property that $\phi(\Lc)$ is a dual 0-cell of $[\Lc]$, that is,
 the center of some open (top dimension) cell in $[\Lc]$.
In constructions that follow, 
 we fix a choice function $\phi_k:\Lambda_k\to X$ for each $k$
 and denote all these choice functions by the same symbol $\phi$.

\subsection{Integration}
\label{ss:integr}
Suppose now that $\Lc\in\Lambda_k$ and that $\xi\in C^{k+1}_\Lc$ is a coboundary, and we want to define
 some specific $\eta\in C^k_\Lc$ with $d\eta=\xi$.
Let $a$ be an arbitrary dual $0$-cell of $[\Lc]$, and let $e$ be an oriented $k$-cell such that $e\cap [\Lc]=\{a\}$.
Recall that the flat $[\Lc]$ is connected; therefore, there is a path from $\phi(\Lc)$ to $a$
 represented by a finite sequence
$$
a_0=\phi(\Lc),\  I_1,\  a_1,\  \dots, a_{m-1},\  I_{m},\  a_m=a,
$$
 where $I_i$ for $i=1$, $\dots$, $m$ are dual 1-cells of $[\Lc]$, and $a_i$ for $0<i<m$ is the common
 endpoint of $I_{i}$ and $I_{i+1}$, which is then a dual 0-cell of $[\Lc]$.
For every $i$, choose an oriented dual $k$-cell $e_i$ of $X$ such that $e_i\cap [\Lc]=\{a_i\}$.
Orientations of $e_i$ can be chosen so that $e_m=e$ (taking orientations into account)
 and so that to satisfy the following requirement:
 for $0<i<m$, the dual cells $e_i$ and $e_{i+1}$ are oriented as the boundary of the same
 oriented dual $(k+1)$-cell $e_{i,i+1}$, whose intersection with $[\Lc]$ equals $I_i$.
Define
$$
\eta(e):=\sum_{i=1}^{m-1} \xi[e_{i,i+1},\Lc].
$$

We know that $\xi$ is a coboundary, therefore, its value on any cycle is 0.
It follows that $\eta$ is well defined, i.e., the value $\eta(e)$ given by the above formula
 does not depend on the choice of a path from $\phi(\Lc)$ to $a$.
Note also that, once $e$ and the path have been chosen, the dual cells $e_{i,i+1}$, including their orientations,
 are recovered uniquely.
As can be verified by a straightforward computation, $d\eta=\xi$ --- this is, in fact,
 a discrete version of a standard ``integration'' procedure for exact 1-forms going back to Poincar\'{e}.
The cochain $\eta$ as defined above is denoted by $\Is_\phi(\xi)$.
Call $\Is_\phi$ the \emph{integration map} corresponding to the choice function $\phi$.
Properties of the integration map are summarized in the following lemma.

\begin{lem*}
For a fixed $\phi$, the map $\Is_\phi:C^{k+1}_\Lc\to C^k_\Lc$ is a linear map with the property
 $d \Is_\phi=id$ on $dC^k_\Lc$.
\end{lem*}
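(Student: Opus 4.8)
The plan is to read this lemma off as the classical fact that an exact $1$-cochain on a connected complex has a primitive obtained by integrating along edge-paths from a fixed base point, the only new ingredient being the local coefficient system $\Oc_{[\Lc]}$. Recall from \ref{ss:labgen} that $C^k_\Lc$ is the space of $0$-cochains and $C^{k+1}_\Lc$ the space of $1$-cochains of the regular CW complex $[\Lc]_\star$ with coefficients in $\Oc_{[\Lc]}$, and that $d$ is the ordinary coboundary on $[\Lc]_\star$. Since $[\Lc]$ is a flat it is connected, hence the $1$-skeleton of $[\Lc]_\star$ is a connected graph, and $\phi(\Lc)$ is a distinguished vertex of it. The auxiliary cells $e_i$ and $e_{i,i+1}$ in \ref{ss:integr} serve only to transport a trivialization of $\Oc_{[\Lc]}$ along a chosen edge-path; with this trivialization fixed, the number $\eta(e)=\Is_\phi(\xi)(e)$ is just the discrete line integral of $\xi$ along the path, normalized to vanish at $\phi(\Lc)$. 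The lemma then packages three assertions: $\eta$ is independent of the path (well-definedness), $\xi\mapsto\Is_\phi(\xi)$ is $\Q$-linear, and $d\,\Is_\phi=\mathrm{id}$ on $dC^k_\Lc$.

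First I would prove well-definedness, i.e.\ that $\eta(e)$ does not depend on the chosen edge-path or on the accompanying data. Two edge-paths from $\phi(\Lc)$ to the vertex $e\cap[\Lc]$ differ by a closed loop, and concatenating the first with the reverse of the second produces, together with the transported trivializations, a genuine $1$-cycle $\alpha\in C_1([\Lc]_\star;\Oc_{[\Lc]})$; the difference of the two candidate values of $\eta(e)$ equals $\<\xi,\alpha\>$. Since by hypothesis $\xi\in dC^k_\Lc$, write $\xi=d\zeta$ with $\zeta\in C^k_\Lc$; then $\<\xi,\alpha\>=\<\zeta,\d\alpha\>=\<\zeta,0\>=0$, so $\eta(e)$ is unambiguous. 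One must also check that reversing the orientation of $e$ reverses $\eta(e)$: propagating the consistency conditions backwards along any fixed path, a flip of $e$ flips every $e_{i,i+1}$, hence every summand, so $\eta$ really lives in $C^k_\Lc$. (If a map on all of $C^{k+1}_\Lc$ is wanted rather than only on the subspace $dC^k_\Lc$ of coboundaries, one may extend $\Is_\phi$ by zero off any linear complement; the extension plays no role in the statement.)

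Linearity is then immediate: once the value is known to be independent of the choices, fix, once and for all and for each dual $k$-cell $e$ meeting $[\Lc]$, a single path with its cell data; with these fixed, the formula for $\eta(e)$ is manifestly additive and $\Q$-homogeneous in $\xi$, so $\xi\mapsto\Is_\phi(\xi)$ is linear. For $d\,\Is_\phi=\mathrm{id}$ on $dC^k_\Lc$, I would evaluate $d\eta$ on a generator $[e',\Lc]$ of $C^{k+1}_\Lc$, where $e'\cap[\Lc]$ is a dual $1$-cell of $[\Lc]_\star$ joining vertices $a'$ and $a''$. By the pairing convention $\<d\eta,[e',\Lc]\>=\<\eta,\d[e',\Lc]\>$, so $d\eta[e',\Lc]$ is the difference of the values of $\eta$ at $a''$ and at $a'$ computed in the trivialization carried by $e'$; choosing for the path to $a''$ the path to $a'$ extended by the edge $e'\cap[\Lc]$, the two sums telescope against each other and leave exactly the single extra term $\xi[e',\Lc]$. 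Hence $d\,\Is_\phi(\xi)=\xi$ for every $\xi\in dC^k_\Lc$, which is the assertion.

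The step I expect to be the genuine obstacle — everything else being the textbook discrete ``integration of exact forms'' in the spirit of Poincar\'e — is the careful handling of the coefficient sheaf $\Oc_{[\Lc]}$. One has to verify that the system of oriented cells $(e_i,e_{i,i+1})$ attached to an edge-path is indeed a trivialization of $\Oc_{[\Lc]}$ compatible with $\d$, that around a null-homologous loop this trivialization closes up so as to yield an honest cycle on which the coboundary $\xi$ pairs to zero, and that the signs coming from orientations of dual $(k+1)$-cells versus orientations of their intersections with $[\Lc]$ are internally consistent — this is also the reason the normalization of the primitive at the base vertex $\phi(\Lc)$, built into the choice function, matters. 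Once these bookkeeping points are settled, both well-definedness and $d\,\Is_\phi=\mathrm{id}$ follow from the telescoping computations above.
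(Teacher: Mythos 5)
Your argument is correct and is essentially the paper's own proof: the lemma is established there by exactly this discrete Poincar\'e-type integration along an edge-path from the base vertex $\phi(\Lc)$, with well-definedness coming from a coboundary pairing to zero with the cycle formed by two paths sharing endpoints, linearity from the formula, and $d\,\Is_\phi=\mathrm{id}$ on $dC^k_\Lc$ from the same telescoping computation you describe. The coorientation-sheaf bookkeeping you defer is likewise left implicit in the paper (and is harmless in the orientable pseudoaffine and pseudotoric settings where the integration map is actually used).
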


Taking the direct sum over all flags $\Lc\in\Lambda_k$, we obtain a map from $C^{k+1}_{(k)}$ to $E^k$,
 which will also be denoted by $\Is_\phi$.
As follows from the lemma, $\Is_\phi$ is a partial right inverse of $d$ defined on $d E^k$,
 that is $d\Is_\phi\xi=\xi$ for every $\xi\in d E^k$.

Note that the complex $C^\bullet_\Lc$, as well as the integration map $\Is_\phi:C^{k+1}_\Lc\to C^k_\Lc$
 depend only on the base flat $[\Lc]$ and its coorientation, not on other flats in $\Lc$.
For this reason, we may also use the above lemma in the context where $\Lc$ is a partial flag not in $\Lambda_k$.

\subsection{Properties of the integrals}
\label{ss:intrec}
The following lemma relates properties of $\xi\in E^k$ such as reciprocity laws to properties of
 $\Is_\phi\xi$.

\begin{lem*}
Suppose that $\xi\in E^k$ satisfies the cocycle condition and all the period vanishing conditions.
Then $\eta:=\Is_\phi\iota^{*}\xi$ is well defined, and $\xi=D\eta$.
Furthermore, if $\xi$ satisfies the reciprocity laws of all types $(m,k)$ with $0<m<k$,
 then $\eta$ satisfies the reciprocity laws of all types $(m,k-1)$ with $m<k-1$
 as well as the cocycle condition.
\end{lem*}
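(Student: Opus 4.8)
The plan is to treat the two assertions — well-definedness of $\eta$ together with $\xi=D\eta$, and the inherited reciprocity and cocycle properties of $\eta$ — in turn, using the lemmas of \ref{ss:integr}, \ref{ss:pervan}, \ref{ss:recip}, and the isomorphism $\iota$ of \ref{p:iso}. Write $\omega:=\iota^{*}\xi\in C^{k}_{(k-1)}$. The cocycle condition on $\xi$ is, by definition, $d\omega=0$, and the period vanishing conditions say exactly that each component $\omega_{\Lc}$ (for $\Lc\in\Lambda_{k-1}$) is a coboundary in $C^{k}_{\Lc}$. Hence the integration map $\Is_\phi$ of \ref{ss:integr} applies to every $\omega_{\Lc}$, so $\eta:=\Is_\phi\omega=\Is_\phi\iota^{*}\xi$ is well defined and $d\eta=\omega$ by that lemma. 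Since $D=\iota^{*-1}d$ (see \ref{ss:coch}), this yields $D\eta=\iota^{*-1}\omega=\iota^{*-1}\iota^{*}\xi=\xi$; this is the lemma of \ref{ss:pervan} with the primitive pinned down as $\Is_\phi\iota^{*}\xi$. The cocycle condition for $\eta$ then costs nothing: for $k>1$ the hypotheses include the type $(k-1,k)$ reciprocity laws for $\xi=D\eta$, and the interpretation of those laws in \ref{ss:recip} says precisely that $d\iota^{*}\eta=0$, which is the cocycle condition for $\eta$; for $k=1$ the extra assertions are vacuous.

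For the reciprocity laws of $\eta$, fix a type $(m,k-1)$ with $0<m<k-1$, a partial flag $\Lc^{\circ}$ omitting exactly codimension $m$, and a dual oriented $(k-1)$-cell $e$ with $e\cap[\Lc^{\circ}]\neq\0$; the goal is $\sum_{M}\eta[e,\Lc^{\circ}\cup\{M\}]=0$. Since $m<k-1$, the base flat $[\Lc^{\circ}\cup\{M\}]$ equals $[\Lc^{\circ}]$ for every admissible $M$, so all the components $\eta_{\Lc^{\circ}\cup\{M\}}$ lie in the single complex $C^{\bullet}_{[\Lc^{\circ}]}$. I would compute each $\eta[e,\Lc^{\circ}\cup\{M\}]$ from the defining formula of $\Is_\phi$ along one and the same path in $[\Lc^{\circ}]_{\star}$, running from a common dual $0$-cell to $e\cap[\Lc^{\circ}]$: path-independence of the integral (legitimate because each $\omega_{\Lc^{\circ}\cup\{M\}}$ is a coboundary) makes the value insensitive to the path, and the basepoints $\phi(\Lc^{\circ}\cup\{M\})$ are a single $0$-cell independent of $M$, the choice functions of \ref{ss:choice} being taken to depend only on the base flat. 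With uniform data the sum telescopes into a signed sum, over the dual $1$-cells $I$ of the fixed path, of the inner sums $\sum_{M}\omega_{\Lc^{\circ}\cup\{M\}}(I)$.

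It remains to see that each such inner sum vanishes, and here \ref{p:iso} enters. A dual $1$-cell $I$ of $[\Lc^{\circ}]_{\star}$ is the same datum as a dual $k$-cell $e'$ of $X$ meeting $[\Lc^{\circ}]$, and by the definition of $\iota$ this $e'$ determines a codimension $k$ flat $L'\subseteq[\Lc^{\circ}]$ with $\omega_{\Lc^{\circ}\cup\{M\}}(I)=\xi[e',\,\Lc^{\circ}\cup\{L'\}\cup\{M\}]$. The partial flag $\Lc^{\circ}\cup\{L'\}$ then omits exactly codimension $m$ among codimensions $0$ to $k$, and since $L'\subseteq[\Lc^{\circ}]$ lies below every admissible $M$, the set of $M$ occurring here is precisely the set of $M$ with $(\Lc^{\circ}\cup\{L'\})\cup\{M\}\in\Lambda_{k}$. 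Thus $\sum_{M}\omega_{\Lc^{\circ}\cup\{M\}}(I)$ is the left-hand side of the type $(m,k)$ reciprocity law for $\xi$ at $(\Lc^{\circ}\cup\{L'\},e')$, which vanishes by hypothesis; the reciprocity law for $\eta$ follows.

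I expect the main obstacle to be exactly this transfer step: identifying the index set of a reciprocity law for $\eta$ with that of one for $\xi$ through $\iota$, and arranging the basepoints and integration paths uniformly in $M$ so that the telescoped sum is genuinely zero rather than a nonzero parallel section of the coorientation sheaf $\Oc_{[\Lc^{\circ}]}$. The surrounding bookkeeping — the coorientation twists, and the fact that $\d$ on $C_{\bullet}$ and $\d$ on $C^{(l)}_{\bullet}$ denote unrelated operators — is where sign errors are easiest to commit; the well-definedness, the identity $\xi=D\eta$, and the cocycle condition for $\eta$ are, by contrast, little more than unwinding definitions and quoting \ref{ss:integr}, \ref{ss:pervan}, \ref{ss:coch}, and \ref{ss:recip}.
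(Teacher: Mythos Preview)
Your proof is correct and follows essentially the same route as the paper's: both obtain $\eta$ by integrating $\iota^{*}\xi$ componentwise, and both transfer the type $(m,k)$ reciprocity laws of $\xi$ to type $(m,k-1)$ laws for $\eta$ by exploiting that the integration map depends only on the base flat $[\Lc^\circ]$ (which you make explicit by requiring $\phi$ to depend only on the base flat, a point the paper uses but leaves implicit). The one notable difference is your handling of the cocycle condition for $\eta$: you invoke the ``if and only if'' lemma of \ref{ss:recip} directly (type $(k-1,k)$ reciprocity for $D\eta$ is equivalent to $d\iota^{*}\eta=0$), whereas the paper re-derives this equivalence from scratch inside the proof; your shortcut is cleaner and entirely legitimate.
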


\begin{proof}
Assume that $\xi\in E^k$ satisfies the cocycle condition and all the period vanishing conditions.
By \ref{ss:pervan}, one has $\xi\in DE^{k-1}$, equivalently, $\iota^{*}\xi\in dC^{k-1}_{(k)}$.
It follows that $\eta:=\Is_\phi\iota^{*}\xi$ is well defined, moreover,
$$
D\eta=\iota^{*-1}d\eta=\iota^{*-1}\iota^{*}\xi=\xi.
$$

Now suppose additionally that $\xi$ satisfies the reciprocity laws of all types $(m,k)$ with $0<m<k$.
Fix $m<k-1$ and write down a reciprocity law for $\xi$ at, say, $[e,\Lc^\circ]$,
 namely, $\sum_M \xi[e,\Lc^\circ\cup\{M\}]=0$,
 where the sum is over all dimension $m$ flats $M$ such that $\Lc^\circ\cup\{M\}\in\Lambda_{k}$.
Applying the isomorphism $\iota^{*}$ to both sides, we obtain that
$$
\sum_M \iota^{*}\xi[e,\Gamma\Lc_M]=0,\quad \Lc_M:=\Lc^\circ\cup\{M\}.
$$
Note that $\iota^*\xi\in C^k_{\Gamma\Lc^\circ}$ and that $\Gamma\Lc_M\in\Lambda_{k-1}$.
The integration operator $\Is_\phi$ is well defined on $C^{k}_{\Gamma\Lc^\circ}$
 (even though $\Lc^\circ$ is an incomplete flag that misses a codimension $m$ flat;
 see the end of \ref{ss:integr})
 and takes this space to the space $C^{k-1}_{\Gamma\Lc^\circ}$.
It follows that, acting by $\Is_\phi$ on both sides of the last equality,
 we obtain the reciprocity laws for $\eta=\Is_\phi\iota^*\xi\in C^{k-1}_{\Lc^\circ}$
 at all $[e';\Gamma\Lc^\circ]$, where $e'$
 is an oriented dual $(k-1)$-cell of $X$ such that $e'\cap[\Gamma\Lc^\circ]\ne\0$.
Hence, $\eta$ satisfies the reciprocity laws of all types $(m,k-1)$, where $0<m<k-1$.

The situation is slightly different for $m=k-1$, that is, when we try to prove the cocycle condition for $\eta$.
Even though one obtains in the same way as above that
 $\sum_M \iota^{*}\xi[e,\Gamma\Lc_M]=0$, the base flat of $\Gamma\Lc_M$
 is now $M$, and it is different for different terms in the sum.
On the other hand, using that $\iota^{*}\xi=d\eta$ and using the definition of $d$,
 we obtain that
$$
\sum_M \eta[e^+_M,\Gamma\Lc_M]-\eta[e^-_M,\Gamma\Lc_M]=0,
$$
 where $e^\pm_M$ are constructed by $e$ as $e^\pm$ from \ref{ss:coch}; they depend on $M$.
By definition of the isomorphism $\iota$, the term $\eta[e^\pm_M,\Gamma\Lc_M]$
 can be replaced with $\iota^{*}\eta[e^\pm_M,\Gamma\Lc^\circ]$.
Recall that the base flat of $\Gamma\Lc^\circ$ has codimension $k-2$.
Finally, the sum over $M$ being equal to zero is interpreted as $d\iota^{*}\eta[e,\Gamma\Lc^\circ]=0$
 (indeed, every dual $(k-1)$-cell in the boundary of $e$, oriented as in $\d e$
 and crossing the flat $[\Gamma\Lc^\circ]$,
 can be uniquely identified as $e^\pm_M$, for a flat $M$ of dimension $m=k-1$
 such that $\Lc_M\in\Lambda_k$).
As $e$ is arbitrary, we obtain that $\eta$ satisfies the cocycle condition.
\end{proof}

\section{Affine and pseudoaffine arrangements}
\label{s:aff}
An arrangement of hyperplanes in an affine space is not a topological arrangement,
 since the affine space is not compact.
However, it yields a topological arrangement in a straightforward fashion.
One only needs to pass to the one point compactification $\Sb^n=\R^n\cup\{\infty\}$
 of the affine space $\R^n$.
Here $\Sb^n$ is, of course, homeomorphic to the $n$-sphere.
In this way, every arrangement of affine hyperplanes in $\R^n$ defines
 a topological arrangement in $\Sb^n$, in which all hyperplanes pass through the point $\infty$.
This is a special case of pseudoaffine topological arrangements defined below.

\subsection{Pseudoaffine topological arrangements}
A topological arrangement $(X,\Ac)$ is said to be \emph{pseudoaffine} if the following conditions hold:
\begin{enumerate}
  \item every flat of $\Ac$ of dimension $>0$ is homeomorphic to a sphere;
  \item there is a special point $\infty\in X$ that lies in all hyperplanes. 
\end{enumerate}
Recall that $X$ is also a flat, hence it is also assumed to be a topological sphere.
On the other hand, we do not require that the intersection of two or several
 hyperplanes is connected; it may have several connected components but
 each must be a topological sphere (unless it is zero dimensional).
When discussing $(X,\Ac)$, we will use notation and terminology introduced above for
 general topological arrangements.
Note that a pseudoaffine arrangement is always orientable.

\subsection{Description of $\Vc_k$ for $k<n$}
\label{ss:desc-Vck}
The quotients $\Vc_k$ of the Varchenko--Gelfand algebra can be described as
 vector subspaces of $\Fc_k$.
Recall also that $\Fc_k$ is the space of all alternating functions on $\Lambda^\Oc_k$.
Below, we assume that $k<n:=\dim(X)$.

\begin{thm*}
Elements of $\Vc_k\subset\Fc_k$ for $k<n$ are precisely those alternating functions on $\Lambda^\Oc_k$
 which satisfy all the reciprocity laws.
\end{thm*}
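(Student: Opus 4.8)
The plan is to prove the two inclusions separately. One of them is already in hand: by Lemma~\ref{t:recip} (together with Lemma~\ref{p:typek-1k}) every element of $\Vc_k\subset\Fc_k$ satisfies all reciprocity laws of types $(m,k)$ with $0<m<k$, and for $k\le 1$ this is vacuous since there are no such types; as $\Vc_k$ is, by \ref{ss:emb-thm}, a subspace of $\Fc_k$, that is, of the space of alternating functions on $\Lambda^\Oc_k$, the inclusion ``$\subseteq$'' follows. For the reverse inclusion I must show that any $\xi\in\Fc_k$ satisfying all the reciprocity laws lies in $\Vc_k$. Concretely, I will produce $f\in\Vc_{\le k}=\ker[D^{k+1}\colon\Vc\to E^{k+1}]\subset\Vc=E^0$ with $D^k f=\xi$; granting this, $D^{k+1}f=D\xi=0$ because $\xi\in\Fc_k=\ker[D\colon E^k\to E^{k+1}]$, so $f$ indeed lies in $\Vc_{\le k}$, and under the embedding $D^k\colon\Vc_k\hookrightarrow\Fc_k$ the class of $f$ maps to $\xi$, as required.

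The decisive point --- and where the hypotheses $k<n$ and pseudoaffineness are used --- is that here the \emph{period vanishing conditions impose nothing}, and will continue to impose nothing throughout the construction below. Indeed, for any cochain $\zeta\in E^j$ with $1\le j\le k$, the period vanishing conditions of $\zeta$ are indexed by flags $\Lc^\circ\in\Lambda_{j-1}$, and the one at $\Lc^\circ$ concerns the group $\Hs_1([\Lc^\circ];\Oc_{[\Lc^\circ]})$ of the base flat $[\Lc^\circ]$, whose dimension equals $n-(j-1)=n-j+1\ge n-k+1\ge 2$. A pseudoaffine arrangement is orientable, so $\Oc_{[\Lc^\circ]}$ is (non-canonically) isomorphic to the constant sheaf $\Q$ and hence $\Hs_1([\Lc^\circ];\Oc_{[\Lc^\circ]})\cong\Hs_1([\Lc^\circ];\Q)$; and since $[\Lc^\circ]$ is a flat of positive dimension it is a topological sphere, of dimension $\ge 2$, so this homology group vanishes. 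The cocycle condition is likewise no obstacle at the start, since every element of $\Fc_k$ satisfies it by Lemma~\ref{ss:cocycle}.

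Now I run a downward recursion using the integration lemma of \ref{ss:intrec}. Set $\eta^{(k)}:=\xi$; I claim one can build $\eta^{(j)}\in E^j$ for $j=k-1,k-2,\dots,0$ with $D\eta^{(j)}=\eta^{(j+1)}$, subject to the extra requirement that, for $1\le j\le k$, the cochain $\eta^{(j)}$ satisfies the cocycle condition and all reciprocity laws of types $(m,j)$ with $0<m<j$. The case $j=k$ of this extra requirement holds by the hypothesis on $\xi$ and by Lemma~\ref{ss:cocycle}. For the step, assume $\eta^{(j)}$ (for some $1\le j\le k$) has been found with these properties; since its period vanishing conditions are vacuous by the previous paragraph, $\eta^{(j)}$ satisfies all the hypotheses of the lemma of \ref{ss:intrec}, so $\eta^{(j-1)}:=\Is_\phi\iota^{*}\eta^{(j)}\in E^{j-1}$ is well defined and $D\eta^{(j-1)}=\eta^{(j)}$. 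If in addition $j\ge 2$, the ``furthermore'' clause of that lemma shows that $\eta^{(j-1)}$ again satisfies the cocycle condition and all reciprocity laws of types $(m,j-1)$ with $0<m<j-1$, which is the extra requirement at level $j-1$; so the recursion continues down to $\eta^{(0)}\in E^0=\Vc$. Since $D\eta^{(j)}=\eta^{(j+1)}$ for $0\le j\le k-1$, we obtain $D^k\eta^{(0)}=\eta^{(k)}=\xi$, and $f:=\eta^{(0)}$ is the cochain promised in the first paragraph; this finishes the proof.

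The main obstacle is bookkeeping rather than a single hard step: one must verify that at every level of the recursion the flats controlling the period vanishing conditions are spheres of dimension $\ge 2$ (this is precisely where $k<n$, orientability, and the pseudoaffine requirement that flats of positive dimension be spheres all enter --- including for the base flat $X\cong\Sb^n$ relevant at the last step, where $n\ge k+1\ge 2$), and that the conclusion of \ref{ss:intrec} returns exactly the cocycle condition and the one-step-lower family of reciprocity laws needed to iterate. The borderline cases require no separate argument but are worth bearing in mind: for $k=0$ one has $\Vc_0=\Fc_0=\Q$ with no reciprocity laws, and for $k=1$ there are again no reciprocity laws while the single period vanishing condition, at the flag $(X)$, is empty because $n\ge 2$.
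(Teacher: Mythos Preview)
Your proof is correct and follows essentially the same route as the paper's: both directions are argued identically, with the forward inclusion via \ref{t:recip} and the reverse via iterated integration using \ref{ss:intrec}, exploiting that for $k<n$ the relevant base flats are spheres of dimension $\ge 2$ so the period vanishing conditions are vacuous. Your write-up is simply more explicit about the recursion bookkeeping and the edge cases $k\le 1$.
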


\begin{proof}
We already know that all elements of $\Vc_k$ satisfy all the reciprocity laws, see \ref{t:recip}.
Now let $\xi\in\Fc_k$ satisfy the reciprocity laws of all types $(m,k)$ with $0<m<k$.
By \ref{ss:cocycle}, the cochain $\xi$ also satisfies the cocycle condition.
Also, since $\Hs_1([\Lc];\Oc_{[\Lc]})=0$ for every $\Lc\in\Lambda_{k-1}$ (recall that $k<n$,
 so that $[\Lc]$ is a topological sphere of dimension at least 2),
 the period vanishing conditions hold automatically.
It follows that $\xi=D\eta$ for a well defined $\eta\in E^{k-1}$.
Moreover, by \ref{ss:intrec}, the reciprocity laws for $\xi$ imply both the cocycle condition and
 the reciprocity laws for $\eta$ of types $(m,k-1)$ with $0<m<k-1$.
Again, the period vanishing conditions for $\eta$ are automatic, and we can iterate the argument.
We conclude that $\xi=D^kf$ for some $f\in\Vc_{\le k}$.
\end{proof}

\subsection{First description of $\Vc_n$}
\label{ss:1stdesc}
In order to describe $\Vc_n$ as a vector subspace of $\Fc_n$, we need to use
 the period vanishing conditions.

\begin{thm*}
The subspace $\Vc_n\subset\Fc_n$ consists exactly of those alternating functions on $\Lambda^\Oc_n$
 that satisfy the reciprocity laws as well as the period vanishing conditions.
\end{thm*}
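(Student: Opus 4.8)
The plan is to prove the two inclusions separately. One of them merely re-packages facts already at hand; the other is a verbatim adaptation of the proof in \ref{ss:desc-Vck}, the single new ingredient being that period vanishing conditions must now be imposed at the one-dimensional base flats that occur at the top level.

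That every $\xi\in\Vc_n$ satisfies the asserted conditions is quick. The reciprocity laws hold by \ref{t:recip}. For the period vanishing conditions, write $\xi=D^nf$ with $f\in\Vc$; then $\xi=D\eta$ with $\eta=D^{n-1}f\in E^{n-1}$, so $\iota^{*}\xi=\iota^{*}\iota^{*-1}d\eta=d\eta$ is a coboundary in $C^n_{(n-1)}$, and hence all periods of $\iota^{*}\xi$ vanish; the cocycle condition for $\xi$ is automatic by \ref{ss:cocycle}.

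Conversely, let $\xi\in\Fc_n$ satisfy all the reciprocity laws and all the period vanishing conditions; I want $f\in\Vc_{\le n}=\Vc$ with $\xi=D^nf$. Since $\xi\in\Fc_n$, it satisfies the cocycle condition by \ref{ss:cocycle}, so the lemma of \ref{ss:intrec} applies to $\xi\in E^n$: the cochain $\eta_1:=\Is_\phi\iota^{*}\xi\in E^{n-1}$ is well defined, $\xi=D\eta_1$, and --- using that $\xi$ satisfies the reciprocity laws of all types $(m,n)$ with $0<m<n$ --- the cochain $\eta_1$ satisfies the cocycle condition and the reciprocity laws of all types $(m,n-1)$ with $0<m<n-1$. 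This first step is the only one that uses the period vanishing hypothesis, because here the base flats $[\Lc]$, $\Lc\in\Lambda_{n-1}$, are one-dimensional, i.e.\ topological circles with $\Hs_1=\Q$. From now on the descent is automatic, exactly as in \ref{ss:desc-Vck}: for $\eta_1\in E^{n-1}$ the period vanishing conditions live at flags $\Lc\in\Lambda_{n-2}$, whose base flats have dimension $2$ and hence, $(X,\Ac)$ being pseudoaffine, are topological $2$-spheres with $\Hs_1([\Lc];\Oc_{[\Lc]})=0$; so the period vanishing conditions hold automatically and the lemma of \ref{ss:intrec} again yields $\eta_2:=\Is_\phi\iota^{*}\eta_1\in E^{n-2}$ with $\eta_1=D\eta_2$, satisfying the cocycle condition and the reciprocity laws of types $(m,n-2)$ with $0<m<n-2$. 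Iterating, after the first step every further period vanishing condition is imposed at base flats of dimension $\ge2$ and so is automatic; we thus descend through $E^{n-1},E^{n-2},\dots,E^{1},E^{0}$ and obtain $f\in E^{0}=\Vc$ with $\xi=D^{n}f$. Since $E_{n+1}=0$ forces $\Vc_{\le n}=\Vc$, we have $f\in\Vc_{\le n}$, so $\xi=D^{n}f$ represents an element of $\Vc_n$ under the identification of \ref{ss:emb-thm}.

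The main obstacle is organizational rather than conceptual: one must keep track, at each level of the descent, that the three hypotheses of the lemma of \ref{ss:intrec} are exactly met (the cocycle condition; all reciprocity laws of the types relevant at that level; all period vanishing conditions), and confirm that the admissible reciprocity-law types shrink correctly as the codimension drops, so that the iteration goes through cleanly down to $E^0$.
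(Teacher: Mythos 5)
Your proposal is correct and takes essentially the same route as the paper: the forward inclusion via \ref{t:recip} and \ref{ss:pervan}, and the converse by a first application of the integration lemma of \ref{ss:intrec} (enabled by the period vanishing hypothesis at the one-dimensional base flats), after which the descent proceeds exactly as in \ref{ss:desc-Vck} since the remaining period vanishing conditions are automatic on spherical flats of dimension at least two. The only difference is that you spell out the iteration and the forward-direction coboundary computation explicitly, where the paper simply refers back to \ref{ss:desc-Vck} and \ref{ss:pervan}.
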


\begin{proof}
Elements of $\Vc_n$ satisfy all the reciprocity laws and the period vanishing conditions,
 as follows from \ref{t:recip} and \ref{ss:pervan}.
On the other hand, suppose that $\xi\in\Vc_n$ satisfies all the reciprocity laws and the period vanishing conditions.
By \ref{ss:pervan}, there exists $\eta\in E^{n-1}$ such that $\xi=D\eta$.
Moreover, by \ref{ss:intrec}, one can choose $\eta$ so that it satisfies the cocycle condition as
 well as the reciprocity laws of all types $(k,n-1)$ with $0<k<n-1$.
The rest of the proof is now the same as in \ref{ss:desc-Vck};
 one simply performs further integrations until one obtains $f\in E^0$ with $D^n f=\xi$.
\end{proof}

The pseudoaffine case is somewhat special in that the period vanishing
 conditions are only needed for $\Vc_n$, not for $\Vc_k$ with $k<n$.
On the other hand, there is a description of $\Vc_n$ that does not
 refer to period vanishing conditions, as explained below.

\subsection{Second description of $\Vc_n$}
\label{ss:2nddesc}
\def\cb{\mathbf{c}}
An alternative way of describing $\Vc_n$ is as follows.
Let $\Fc^{\mathbf{c}}_n$ (here $\mathbf{c}$ is from ``$\mathbf{c}$ompact'')
 be the space of all alternating functions on $\Lambda^\cb_n$,
 where $\Lambda^\cb_n$ consists of all oriented flags $\Lc\in\Lambda_n$ except those with $[\Lc]=\{\infty\}$,
 that is, we consider only flags based at finite points.
Clearly, the dimension of $\Fc^\cb_n$ over $\Q$ is precisely the number of flags $\Lc\in\Lambda_n$ with $[\Lc]\ne\{\infty\}$;
 call such flags \emph{finite flags}.
There is a natural linear homomorphism $\rho^\cb:\Fc_n\to\Fc^\cb_n$ given
 by the restriction of a function on $\Lambda^\Oc_n$ to $\Lambda^\cb_n$.
Note that $\rho^\cb$ is surjective but not injective.
On the other hand, $\rho^\cb$ is injective on $\Vc_n\subset \Fc_n$, since
 the value of $\xi\in\Vc_n$ on a flag $\Lc\in\Lambda_n$ with $[\Lc]=\{\infty\}$
 can be recovered from the values $\xi[\Gamma\Lc\cup\{a\}]$, where $a$ ranges
 through the 0-cells of $\Ac$ in $[\Gamma\Lc]$, using the period vanishing condition at $\Gamma\Lc$.
It therefore suffices to describe $\rho^\cb\Vc_n$ as a subset of $\Fc^\cb_n$.

\begin{thm*}
The subspace $\rho^\cb\Vc_n\subset\Fc^\cb_n$ consists of all the alternating
 functions on $\Lambda^\cb_n$ that satisfy all the reciprocity laws.
\end{thm*}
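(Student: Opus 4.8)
The plan is to reduce to the first description of $\Vc_n$ given in \ref{ss:1stdesc}. For one inclusion, recall from \ref{t:recip} that every element of $\Vc_n$ satisfies all the reciprocity laws; a reciprocity law at a partial flag $\Lc^\circ$ (missing a single codimension $m$ with $0<m<n$) whose base flat $[\Lc^\circ]$ is a \emph{finite} $0$-flat involves only the values of $\xi$ on finite flags, so $\rho^\cb\xi$ satisfies every reciprocity law that makes sense on $\Lambda^\cb_n$. Together with the injectivity of $\rho^\cb$ on $\Vc_n$ observed above, this reduces the theorem to the opposite inclusion: any alternating $\bar\xi$ on $\Lambda^\cb_n$ satisfying all reciprocity laws lies in $\rho^\cb\Vc_n$.

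So, given such a $\bar\xi$, I would first extend it to an element $\xi\in\Fc_n$. Since the arrangement is pseudoaffine, the point $\infty$ lies in every flat; hence a flag $\Lc\in\Lambda_n$ with $[\Lc]=\{\infty\}$ is exactly $\Lc=\Lc^\circ\cup\{\infty\}$ for the unique $\Lc^\circ:=\Gamma\Lc\in\Lambda_{n-1}$, whose base flat $[\Lc^\circ]$ is a $1$-dimensional flat, i.e.\ a topological circle containing $\infty$ among its vertices. Set $\xi$ to equal $\bar\xi$ on $\Lambda^\cb_n$, and for each such $\Lc^\circ$ define $\xi[e_\infty,\Lc^\circ\cup\{\infty\}]$ to be the value forced by the period vanishing condition at $\Lc^\circ$,
$$
\xi[e_\infty,\Lc^\circ\cup\{\infty\}]=-\sum_{a\in[\Lc^\circ],\ a\ne\infty}\xi[e_a,\Lc^\circ\cup\{a\}],
$$
with coorientations of $[\Lc^\circ]$ and of the dual $n$-cells fixed as in \ref{ss:pervan}. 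This is consistent: the $\infty$-term occurs in this relation with coefficient $\pm1$, each $\infty$-flag arises from exactly one $\Lc^\circ$, and the right-hand side is a finite combination of values of $\bar\xi$; reversing the coorientation of $[\Lc^\circ]$ negates both sides, so $\xi$ is alternating. Thus $\xi\in\Fc_n$, it satisfies all period vanishing conditions by construction, and the cocycle condition automatically by \ref{ss:cocycle}.

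What remains is to verify that $\xi$ satisfies \emph{all} reciprocity laws, for then $\xi\in\Vc_n$ by \ref{ss:1stdesc} and $\rho^\cb\xi=\bar\xi$ finishes the proof. Reciprocity laws at partial flags with a finite base flat hold because there $\xi=\bar\xi$. For a partial flag $\Lc^\circ$ with $[\Lc^\circ]=\{\infty\}$ missing a codimension $m$, $0<m<n$, I would plug the extension formula into $\sum_M\xi[e,\Lc^\circ\cup\{M\}]$: each term, an $\infty$-flag value, expands via the period vanishing condition at $\Gamma(\Lc^\circ\cup\{M\})\in\Lambda_{n-1}$ into a signed sum over finite vertices $a$ of values $\xi[e_a,\Gamma(\Lc^\circ\cup\{M\})\cup\{a\}]$ of $\bar\xi$. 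Exchanging the order of summation, the inner sum over the completing flats $M$ becomes, for each fixed $a$, precisely a reciprocity law of $\bar\xi$ at a partial flag with the \emph{finite} base flat $\{a\}$ missing codimension $m$ (the $1$-dimensional flats occurring in the degenerate case $m=n-1$ automatically contain $\infty$, so the same reduction applies); hence the whole sum vanishes. The one place genuine care is needed is the coorientation bookkeeping in this exchange of sums — one must check that the coorientations of $\{\infty\}$, of the circles $[\Lc^\circ]$ and $\Gamma(\Lc^\circ\cup\{M\})$, and of the vertices $\{a\}$ can be chosen coherently so that the regrouped sums are honest reciprocity-law sums with no residual signs; this is routine given the conventions of \ref{ss:coch} and \ref{ss:pervan}, but it is the technical heart of the argument.
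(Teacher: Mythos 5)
Your argument is correct and is essentially the paper's own proof in expanded form: the paper likewise takes the unique extension $\xi\in\Fc_n$ of $\xi^\cb$ forced by the period vanishing conditions at the circles through $\infty$, disposes of the reciprocity laws at $\infty$-based flags by ``summing up the other reciprocity laws'' --- which is precisely your exchange of the two summations --- and then invokes \ref{ss:1stdesc}. The only caveat, shared with (and equally elided by) the paper's own proof, is that your claims that the extension satisfies \emph{all} period vanishing conditions by construction and that in the case $m=n-1$ every relevant $1$-dimensional flat contains $\infty$ are literally valid only when every positive-dimensional flat passes through $\infty$ (as for compactified affine arrangements), whereas a general pseudoaffine arrangement may have flats arising as components of intersections that miss $\infty$.
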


Even though, formally, reciprocity laws were defined for elements of $\Fc_n$
 (or, more generally, of $E^n$), they clearly make sense in $\Fc^\cb_n$.

\begin{proof}
Suppose that $\xi^\cb\in\Fc^\cb_n$ satisfies all the reciprocity laws.
By the above, there is a unique $\xi\in\Fc_n$ such that $\rho^\cb\xi=\xi^\cb$
 and such that $\xi$ satisfies the period vanishing conditions.
The reciprocity laws for $\xi$ are automatic from those for $\xi^\cb$
 (including those at flags $\Lc\ni\{\infty\}$; they are obtained by summing
 up the other reciprocity laws).
By \ref{ss:1stdesc}, one has $\xi\in\Vc_n$, hence $\xi^\cb\in\rho^\cb\Vc_n$.
\end{proof}

\subsection{Valuations on polyhedra}
\label{ss:val-polyh}
We now impose the restriction that $\Ac$ consists of affine hyperplanes in $\R^n$
 but drop the assumption that $\Ac$ is finite.
Our objective in Section \ref{ss:val-polyh} is to describe a space of
 simple valuations on $\Ac$-polyhedra, that is, on all, not necessarily bounded,
 convex polyhedra in $\R^n$ whose facets lie in hyperplanes from $\Ac$.
We restrict our attention to \emph{cosimple} simple valuations
 on $\Ac$-polyhedra: those simple valuations $\mu$, which satisfy $\mu(P)=0$
 whenever $P$ contains an affine subspace of dimension $>0$.
Let $\Vc$ be the space of all convex chains with respect to $\Ac$,
 that is, the space of all functions 
 on $\R^n$ that are
 locally constant outside of a finite union of hyperplanes from $\Ac$.
These functions are viewed as elements of $L^\infty(\R^n,\Q)$;
 in particular, two such functions are identified if they differ only on a set of measure zero.
The vector space $\Ac$ is then spanned by the indicator functions of (not necessarily bounded) convex $\Ac$-polyhedra.
Spaces $E^k$, the Leray operators $D:E^k\to E^{k+1}$, the degree filtration $(\Vc_{\le k})_k$,
 and the corresponding quotients $\Vc_k$. 
 are defined verbatim as in \ref{ss:elem} -- \ref{ss:degfilt0}.
Also, for each $k$, consider the space $\Fc^\cb_k$ of all alternating functions on oriented flags from $\Ac$;
 we use the superscript $\Fc^\cb_k$ to emphasize that our flags are finite (not include $\infty$).
The space $\Vc_n$ embeds naturally into $\Fc^\cb_n$.
Note that cosimple simple valuations on $\Ac$-polyhedra identify with linear
 functionals on $\Vc_n$; the space of such valuations is therefore $\Vc^*_n$.
Firstly, we describe $\Vc_n$ as vector subspace of $\Fc^\cb_n$.

\begin{thm*}
An element of $\Fc^\cb_n$ lies in $\Vc_n$ if and only if it satisfies all the reciprocity laws.
\end{thm*}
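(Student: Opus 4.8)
The plan is to run the same integration argument as in \ref{ss:desc-Vck}--\ref{ss:2nddesc}, exploiting the one feature that distinguishes the affine case from the toric (and general topological) one: every flat of $\Ac$ is an affine subspace of $\R^n$, hence contractible, so all the period vanishing conditions are automatically satisfied and only the reciprocity laws remain. The ``only if'' direction is the formal one: any element of $\Vc_n$ is of the form $D^n f=D^2(D^{n-2}f)$, so it satisfies the type $(n-1,n)$ reciprocity laws by \ref{p:typek-1k}, and, iterating the heritage statement \ref{t:recip}, the reciprocity laws of all types $(m,n)$ with $0<m<n$; these arguments are purely formal consequences of $d\circ d=0$ and $D=\iota^{*-1}d$, so they remain valid in the present, possibly infinite, non-compact situation.

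For the ``if'' direction one would take $\xi\in\Fc^\cb_n$ satisfying all the reciprocity laws and produce, by $n$-fold integration, an element $f\in E^0=\Vc$ with $D^n f=\xi$. Concretely: $\xi$ satisfies the cocycle condition automatically, since in degree $n$ there is nothing to check ($E^{n+1}=0$; cf. \ref{ss:cocycle}); for every $\Lc^\circ\in\Lambda_{n-1}$ the base flat $[\Lc^\circ]$ is an affine line, so $\Hs_1([\Lc^\circ];\Oc_{[\Lc^\circ]})=0$ and the period vanishing condition at $\Lc^\circ$ holds; hence \ref{ss:pervan} gives $\xi=D\eta$ with $\eta=\Is_\phi\iota^{*}\xi\in E^{n-1}$, and by the integration lemma of \ref{ss:intrec} this $\eta$ again satisfies the cocycle condition together with the reciprocity laws of all types $(m,n-1)$, $0<m<n-1$. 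The crucial observation is that the same step repeats verbatim: at the $j$-th stage the flats $[\Lc^\circ]$, $\Lc^\circ\in\Lambda_{n-1-j}$, are affine subspaces of $\R^n$, hence contractible, so the period vanishing conditions are again vacuous. After $n$ steps one reaches $f\in E^0=\Vc=\Vc_{\le n}$ with $D^n f=\xi$, which is exactly the assertion $\xi\in\Vc_n$ under the embedding of \ref{ss:emb-thm}.

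What really has to be checked, and what I expect to be the main obstacle, is that the apparatus of Sections \ref{s:VGalg}--\ref{s:rels} --- the complexes $E^\bullet$, the operators $D$, $\iota$, the integration map $\Is_\phi$, and the lemmas \ref{ss:cocycle}, \ref{ss:pervan}, \ref{ss:intrec}, \ref{p:typek-1k}, \ref{t:recip} --- continues to work when $\Ac$ is non-compact and possibly infinite, where all of these are merely ``defined verbatim''. Since each of the relevant statements is proved by a computation involving only finitely many cells or flats at once, they transfer unchanged; the two places that need genuine attention are: (i) well-definedness of $\Is_\phi$ --- each flat of $\Ac$ is a connected affine subspace, so any two of its dual $0$-cells are joined by a finite chain of dual $1$-cells, and path-independence of the resulting ``integral'' is precisely the vanishing of the periods, which we have just seen is automatic here; (ii) propagation of ``finite complexity'' --- if $\xi$ is finitely supported then so is $\iota^{*}\xi$, and although its primitive $\Is_\phi\iota^{*}\xi$ need not be finitely supported (on an affine line, finitely many jumps integrate to a step function that is eventually a nonzero constant), it is still a finite $\Q$-linear combination of indicator functions of $\Ac$-polyhedra on each base flat, hence a genuine element of $E^{n-1}$ and not of some completion, so the induction stays inside the spaces of convex chains.

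A second, slightly quicker route avoids re-running the integration and instead deduces the statement from the finite case \ref{ss:2nddesc}. As $\xi\in\Fc^\cb_n$ has finite support, one can choose a finite subarrangement $\Ac'\subset\Ac$ whose flats contain all flats occurring in flags of $\supp(\xi)$ as well as every flat $M$ that contributes a nonzero term to any reciprocity sum involving $\xi$; then, for each such relevant pair, the $\Ac$-reciprocity law and the $\Ac'$-reciprocity law have the same nonzero terms, so $\xi$, viewed in $\Fc^\cb_n(\Ac')$, satisfies all $\Ac'$-reciprocity laws. Applying \ref{ss:2nddesc} gives $\xi\in\Vc_n(\Ac')$, and the natural maps $\Vc_n(\Ac')\to\Vc_n(\Ac)$ are compatible with the embeddings into the spaces $\Fc^\cb_\bullet$, whence $\xi\in\Vc_n(\Ac)$. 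The only delicate point here is to make $\Ac'$ fit the pseudoaffine framework of \ref{s:aff} (for instance by adjoining finitely many further hyperplanes in general position), while observing that such an enlargement introduces no new nonzero terms into the reciprocity relations satisfied by $\xi$, since $\xi$ vanishes on every flag containing a flat outside the original $\Ac'$.
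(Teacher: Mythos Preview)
Your proposal is correct. Your second route --- pass to a finite subarrangement using the finite support of $\xi$ and invoke \ref{ss:2nddesc} --- is exactly the paper's argument, which is a single sentence: ``Since an element of $\Fc^\cb_n$ has finite support, it suffices to assume that $\Ac$ is finite; in this case, the statement is identical with \ref{ss:2nddesc}.'' The paper does not spell out the compatibility checks you mention (that the $\Ac'$-reciprocity laws agree with the $\Ac$-ones on $\supp(\xi)$, or that $\Ac'$ can be enlarged to a genuine pseudoaffine arrangement without disturbing anything), but these are implicit and your remarks about them are accurate.

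Your first route --- running the integration directly in $\R^n$, using that affine flats are contractible so that \emph{all} period vanishing conditions, not just those at level $n$, are vacuous --- is a genuine and clean alternative the paper does not take. It trades the reduction-to-finite step for a verification that the machinery of Sections~\ref{s:VGalg}--\ref{s:rels} survives in the non-compact, locally finite setting. Your point (ii) about propagation of finite complexity is the one real thing to watch, and your observation that $\Is_\phi\iota^{*}\xi$ stays a legitimate convex chain (even though not finitely supported) is the right resolution. This route has the conceptual advantage of making transparent \emph{why} no period conditions appear in the affine theorem: it is contractibility of the flats, not an artifact of the compactification trick in \ref{ss:2nddesc}. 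The paper's route, in exchange, is shorter and reuses the already-proved compact case without reopening any analytic details.
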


\begin{proof}
Since an element of $\Fc^\cb_n$ has finite support, it suffices to assume that $\Ac$ is finite;
 in this case, the statement is identical with \ref{ss:2nddesc}.
\end{proof}

Similarly to the setup of the main theorem, methods of Section \ref{s:linalg} now imply
 the following description of $\Vc^*_n$.

\begin{thm*}
The space of cosimple simple valuations on $\Ac$-polyhedra
 is generated by Hadwiger invariants $\Upsilon_\Lc$ associated with oriented complete flags $\Lc\in\Lambda^\Oc_n$.
Relations between these invariants are generated by the reciprocity laws.
\end{thm*}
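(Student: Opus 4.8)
The plan is to derive this theorem by dualizing the preceding theorem (the first theorem of \ref{ss:val-polyh}), which describes $\Vc_n$ as the subspace of $\Fc^\cb_n$ cut out by the reciprocity laws, and then feeding the resulting picture into the linear algebra of Section \ref{s:linalg}, exactly as was done for the main theorem.

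\emph{Generation.} First I would observe, as in \ref{ss:newlook}, that for a complete oriented flag $\Lc\in\Lambda^\Oc_n$ the base flat $[\Lc]$ is a single vertex of $\Ac$, so that $\vol_{[\Lc]}$ is evaluation at that point and the identity $\Upsilon_\Lc=\vol_{[\Lc]}\circ\pi^\Lc\circ D^n$ exhibits $\Upsilon_\Lc$ as a functional vanishing on $\Vc_{<n}$; under the embedding $\Vc_n\hookrightarrow\Fc^\cb_n$ the induced functional $\tilde\Upsilon_\Lc$ on $\Vc_n$ is the coordinate $\beta\mapsto\beta(\Lc)$, taking the value $\pm1$ on $\pm\1_\Lc$ and $0$ on every other generator $\1_{\Lc'}$ of $\Fc^\cb_n$. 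Since, by the discussion preceding the theorem, a cosimple simple valuation on $\Ac$-polyhedra is precisely a linear functional $\mu$ on $\Vc_n$, and since every $\Ac$-polyhedron is a flag of only finitely many complete flags (so $D^n$ lands in the finitely supported alternating functions on $\Lambda^\Oc_n$), the possibly infinite combination
$$
\mu':=\tfrac12\sum_{\Lc\in\Lambda^\Oc_n}\mu(\1_\Lc)\,\Upsilon_\Lc
$$
is well defined, and the computation of \ref{ss:newlook} gives $\mu'=\mu$ on $\Vc_n$. Hence the Hadwiger invariants generate the space $\Vc_n^*$ of cosimple simple valuations.

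\emph{Relations.} For the second assertion I would invoke the theorem of \ref{ss:rels} with $\Omega:=\Lambda^\Oc_n$, so that $\Fc^\cb_n$ is the space of alternating functions inside $\Q[\Omega]$ and $\Uc:=\Vc_n\subset\Q[\Omega]$. Writing $\Upsilon_\Lc(\alpha)$ as the $\Lc$-coordinate of $D^n\alpha\in\Fc^\cb_n$ and using that $D^n$ maps $\Vc=\Vc_{\le n}$ onto $\Vc_n$, a relation $\sum_\Lc f(\Lc)\Upsilon_\Lc=0$ between Hadwiger invariants is exactly the assertion that $f\in\Uc^\perp$. By the preceding theorem, the reciprocity laws, together with the linear relations expressing that a function on $\Omega$ is alternating, form a defining family of relations for $\Uc$. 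I would then check that this family is locally finite: a fixed complete flag $\Lc=(L^0\supset\dots\supset L^n)$ appears in exactly one alternating relation and in the reciprocity laws indexed by the partial flags $\Lc\sm\{L^m\}$ with $0<m<n$; in each of the latter the dual $n$-cell entering the statement is forced up to orientation, being the dual cell of the single vertex $[\Lc]=L^n$ (the only dual $n$-cell meeting $L^n$). Thus only finitely many defining relations involve $\Lc$, and the theorem of \ref{ss:rels} yields that every $f\in\Uc^\perp$ is a (possibly infinite) rational linear combination of reciprocity laws, which is the claim.

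\emph{Expected main obstacle.} Almost all of the content is imported: the description of $\Vc_n$ inside $\Fc^\cb_n$ from the preceding theorem (which itself reduces to the finite case of \ref{ss:2nddesc}), the coordinate/evaluation picture of \ref{ss:newlook}, and the relations-plus-compactness theorem of Section \ref{s:linalg}. What genuinely remains is bookkeeping: confirming local finiteness of the reciprocity-law family and, above all, keeping the orientation conventions consistent so that the trivial ``alternating'' relations are neither double-counted nor conflated with the reciprocity laws. I expect this last point to be the only place where care is actually needed.
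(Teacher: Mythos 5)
Your proposal is correct and takes essentially the same route as the paper, which proves this theorem in one line by combining the preceding description of $\Vc_n\subset\Fc^\cb_n$ (cut out by the reciprocity laws) with the linear algebra of Section \ref{s:linalg}, exactly as you do: the evaluation formula of \ref{ss:newlook}/\ref{ss:gen-pf} for generation, and the locally finite family of reciprocity laws plus alternating relations fed into \ref{ss:rels} for the relations. The only implicit step, shared with the paper's own treatment, is that writing $\mu(\1_\Lc)$ tacitly extends $\mu$ from $\Vc_n$ to all of $\Fc^\cb_n$, which is harmless.
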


Here, as before, infinite linear combinations are allowed both for generators and for relations.

\subsection{Valuations on polytopes}
\label{ss:val-polyt}
Let $\Ac$ be as in \ref{ss:val-polyh} but now consider simple valuations on $\Ac$-polytopes (=bounded $\Ac$-polyhedra).
Keeping the same notation as in \ref{ss:val-polyh}, denote by $\Vc_\cb$ the
 subspace of $\Vc$ spanned by the indicator functions of $\Ac$-polytopes.
Equivalently, $\Vc_\cb$ consists of all compactly supported functions from $\Vc$,
 more precisely, of functions with compact essential support (disregarding possibly
 nonzero values on measure zero sets).

\begin{lem*}
The space $\Vc_\cb\cap\Vc_{<n}$ is zero, therefore, $\Vc_\cb$ embeds into $\Vc_n$.
\end{lem*}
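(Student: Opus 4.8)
The plan is to establish the equality $\Vc_\cb\cap\Vc_{<n}=0$ by induction on $n$; the second assertion of the lemma then follows formally, since $\Vc_{\le n}=\Vc$ gives $\Vc_n=\Vc/\Vc_{<n}$, so the composite $\Vc_\cb\hookrightarrow\Vc\twoheadrightarrow\Vc_n$ has kernel exactly $\Vc_\cb\cap\Vc_{<n}$. The base case $n=0$ is vacuous ($\Vc_{<0}=0$ by convention). For the inductive step I would take $\al\in\Vc_\cb\cap\Vc_{<n}$, so that $\al$ is compactly supported and $D^n\al=0$, and aim first to show $D\al=0$. Once this is done, $\al$ lies in $\ker(D\colon\Vc\to E^1)=\Vc_{\le0}$, which consists precisely of the constant functions (a convex chain with no jump across any hyperplane is locally constant off a null set, hence globally constant since $\R^n$ is connected); a compactly supported constant is $0$, so $\al=0$, completing the induction.

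The heart of the argument is thus showing $D\al=0$, which I would do by examining each component $(D\al)_{(\R^n\supset H)}$, $H\in\Ac$. Fixing a coorientation of $H$, identify this component with the jump function of $\al$ across $H$. Two points are needed. First, this jump function is compactly supported on $H$ (if $\al$ vanishes a.e.\ outside a ball $B$, the jump vanishes outside $\ol B\cap H$) and is constant on the cells of the induced arrangement $\Ac|_H$, hence is a compactly supported convex chain for $\Ac|_H$ in $H\cong\R^{n-1}$. Second, unravelling the iterative definition of the Leray operator: for a complete flag $\Lc=(\R^n\supset H\supset L^2\supset\cdots\supset L^n)$ extending $(\R^n\supset H)$, the component $(D^n\al)_\Lc$ depends, among all the components of $D\al\in E^1$, only on $(D\al)_{(\R^n\supset H)}$, and after the above identification it equals, up to an irrelevant global sign, the value on the corresponding complete flag of $\Ac|_H$ of $D_H^{\,n-1}\bigl((D\al)_{(\R^n\supset H)}\bigr)$, where $D_H$ is the Leray operator of the arrangement $\Ac|_H$. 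Since every complete flag of $\Ac|_H$ arises from some complete flag of $\Ac$ extending $(\R^n\supset H)$, the hypothesis $D^n\al=0$ forces $D_H^{\,n-1}\bigl((D\al)_{(\R^n\supset H)}\bigr)=0$, i.e.\ $(D\al)_{(\R^n\supset H)}$ lies in the filtration step $\Vc_{<n-1}$ of $\Ac|_H$. Applying the lemma in dimension $n-1$ to $\Ac|_H$ gives $(D\al)_{(\R^n\supset H)}=0$; as $H$ was arbitrary, $D\al=0$.

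I expect the only nonroutine point to be the second item above: making precise the compatibility between $D^n$ for $\Ac$ (restricted to flags through a fixed hyperplane $H$) and the intrinsic operator $D_H^{\,n-1}$ of $\Ac|_H$, including the bookkeeping of the coorientation twist incurred in passing from convex chains on $\R^n$ to convex chains on $H$; in the affine setting the arrangement is orientable, so this twist is merely a global sign and does not affect vanishing. Everything else --- the jump of a compactly supported chain being compactly supported, $\ker D$ being the constants, the final passage to the quotient, and the observation that finiteness of $\Ac$ is irrelevant since $\al$ and each jump function involve only finitely many hyperplanes while $\Ac|_H$ is again an affine (possibly infinite) arrangement in $\R^{n-1}$ to which the inductive hypothesis applies verbatim --- is routine.
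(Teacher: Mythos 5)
Your argument is correct, but it follows a genuinely different route from the paper's. The paper proves the lemma by contraposition and without induction: assuming $\al\in\Vc_\cb\cap\Vc_{<n}$ is nonzero (and, as you also do, reducing to finite $\Ac$), it chooses a representative whose nonzero set is open, sets $P_0=\supp(\al)$, and builds a single complete flag $L_0=\R^n\supset L_1\supset\dots\supset L_n$ by repeatedly taking a facet $P_k$ of $P_{k-1}$ and its affine hull $L_k$; it then observes that $D^n\al$ is nonzero at the dual $n$-cell centered at $L_n$ with this flag, contradicting $\al\in\Vc_{<n}$. So the paper exhibits one geometric witness along the boundary of the support, whereas you peel off one codimension at a time: each jump $(D\al)_{(\R^n\supset H)}$ is a compactly supported convex chain for $\Ac|_H$, the compatibility of $D^n$ restricted to flags through $H$ with the intrinsic operator $D_H^{n-1}$ of $\Ac|_H$ turns $D^n\al=0$ into $D_H^{n-1}$-vanishing, the inductive hypothesis kills each jump, and $\ker D=\{\text{constants}\}$ plus compact support finishes. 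The trade-off: the paper's proof is shorter and needs no restriction/coorientation bookkeeping, but its delicate point is exactly that the iterated jumps stay nonzero along the chosen nested facets (implicitly one should track the support of the successive jump functions when choosing $P_k$); your inductive formulation absorbs that issue automatically, at the price of verifying the identification $(\Ac|_H)|_{[\Lc]}=\Ac|_{[\Lc]}$ and the sign conventions, which, as you correctly note, reduce to a global sign in the orientable affine setting and do not affect vanishing. Your use of finiteness (only finitely many hyperplanes are needed to define $\al$ and its jumps) matches the paper's reduction, so the proposal stands as a valid alternative proof.
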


\begin{proof}
Assume, by way of contradiction, that $\al\in\Vc_\cb\cap\Vc_{<n}$ is nonzero.
We may also assume that $\Ac$ is finite, since only finitely many
 hyperplanes from $\Ac$ are needed to define $\al$.
It suffices to show that $D^n\al\ne 0$ to obtain a contradiction with $\al\in\Vc_{<n}$.
Represent $\al$ by a function (denoted by the same symbol) with $\{q\in\R^n\mid \al(q)\ne 0\}$ open.
Then $\supp(\al)$ is a finite union of convex $n$-polytopes.
Define a complete flag $\Lc=(L_0\supset \dots\supset L_n)$
 and a nested sequence of (possibly, non-convex) polytopes $P_0\supset \dots\supset P_n$ inductively by $k$.
Set $L_0=\R^n$ and $P_0=\supp(\al)$;
 for $k>0$, let $P_k$ be a facet of $P_{k-1}$ and let $L_k$ be the affine hull of $P_k$.
For the complete flag $\Lc$ thus defined, $D^n\al$ takes a nonzero value at $[e,\Lc]$,
 where $e$ the dual $n$-cell of $\Ac$ centered at the point $L_n$.
A contradiction with $\al\in\Vc_{<n}$.
\end{proof}

Since the vector space $\Vc_\cb$ naturally lies in $\Vc_n$, it can also be viewed as a subspace of $\Fc^\cb_n$.

\begin{thm*}
The subspace $\Vc_\cb\subset\Fc^\cb_n$ is given by the reciprocity laws and the
 period vanishing conditions.
\end{thm*}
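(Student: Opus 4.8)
The plan is to reduce to the case where $\Ac$ is finite, pass to the associated pseudoaffine arrangement on $\Sb^n$ (cf.\ Section~\ref{s:aff}), and combine the descriptions of $\Vc_n$ from \ref{ss:1stdesc} and \ref{ss:2nddesc} with an analysis of convex chains near the point $\infty$. Since an element of $\Fc^\cb_n$ has finite support and an element of $\Vc_\cb$ is a finite combination of indicators of $\Ac$-polytopes, all the conditions and objects in play involve only finitely many hyperplanes, so I may assume $\Ac$ finite. Then the one-point compactification turns $\Ac$ into a pseudoaffine topological arrangement $(\Sb^n,\bar\Ac)$ all of whose hyperplanes pass through $\infty$; one has $\Vc(\R^n)=\Vc(\Sb^n,\bar\Ac)$, and by the injectivity statement in \ref{ss:2nddesc} also $\Vc_{<n}$, hence $\Vc_n$, is the same, with the embedding $\Vc_n\hookrightarrow\Fc^\cb_n$ being $\zeta\mapsto\rho^\cb D^n\zeta$. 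I identify $\Fc^\cb_n$ with the space of alternating functions on the complete flags of $\bar\Ac$ not based at $\infty$, so that $\Fc_n=\Fc^\cb_n\oplus\Gc_n$, where $\Gc_n$ is the span of the flags based at $\infty$.

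For the inclusion ``$\subseteq$'': every element of $\Vc_\cb$ lies in $\Vc_n$ and hence satisfies all the reciprocity laws by \ref{t:recip}. It also satisfies all period vanishing conditions; by linearity it suffices to check this on $\1_P$ for a bounded $\Ac$-polytope $P$. The value of $\rho^\cb D^n\1_P$ on a complete flag $\Lc$ is $\pm\Upsilon_\Lc(\1_P)$, and in the period vanishing sum at $\Lc^\circ\in\Lambda_{n-1}$ --- which runs over the $0$-flats $a$ on the line $[\Lc^\circ]$ --- the only possibly nonzero terms occur when $[\Lc^\circ]$ is the affine hull of an edge of $P$, in which case there are exactly two of them, located at the two endpoints of that edge and carrying opposite signs (the coorientation of each endpoint induced by $P$ points along the edge towards the other endpoint). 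Hence the sum vanishes.

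For ``$\supseteq$'': let $\xi\in\Fc^\cb_n$ satisfy all the reciprocity laws and period vanishing conditions, and let $\tilde\xi\in\Fc_n$ be its extension by $0$ to the flags based at $\infty$. A direct check shows $\tilde\xi$ satisfies all the reciprocity laws of $(\Sb^n,\bar\Ac)$ (those at partial flags with a finite base are the reciprocity laws for $\xi$; those with base $\{\infty\}$ involve only flags based at $\infty$, on which $\tilde\xi=0$) and all period vanishing conditions (at $\Lc^\circ\in\Lambda_{n-1}$ the term $a=\infty$ in the $\Sb^n$-sum vanishes, and what remains is exactly the period vanishing condition for $\xi$). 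By \ref{ss:1stdesc}, $\tilde\xi\in\Vc_n(\Sb^n,\bar\Ac)$, so $\tilde\xi=D^n\beta$ for some convex chain $\beta$; moreover $\tilde\xi$ has no components on flags based at $\infty$. Everything therefore comes down to showing that such a $\beta$ satisfies $[\beta]\in\Vc_\cb$, for then $\beta$ can be replaced by a compactly supported representative and $\rho^\cb D^n\beta=\rho^\cb\tilde\xi=\xi$.

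This last point is the main obstacle, and I would prove it by localizing at $\infty$. Because $D$ is given by a local difference formula (see \ref{ss:coch}), iterating it shows that the components of $D^n\beta$ on the complete flags based at a vertex $p$ depend only on the germ $\beta|_{\St(p)}$ and coincide with $D^n$ of that germ computed for the central arrangement that $\bar\Ac$ induces on $\St(p)$. Applied at $p=\infty$, the hypothesis that $D^n\beta$ has no $\infty$-based components says exactly that the germ $\beta|_{\St(\infty)}$ lies in the ``degree $<n$'' part $\Vc_{<n}$ of the central arrangement $\bar\Ac_\infty$ at $\infty$. Recall from the discussion in \ref{ss:val-polyh} that $\Vc_{<n}$ is the span of the indicators of polyhedra containing an affine line; and every line-containing cone of $\bar\Ac_\infty$ (a cone through $\infty$) is the recession cone of, hence has the same germ at $\infty$ as, a line-containing $\Ac$-polyhedron in $\R^n$. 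Expanding $\beta|_{\St(\infty)}$ accordingly and using these polyhedra, I obtain a global chain $\gamma\in\Vc_{<n}(\Sb^n,\bar\Ac)$ with the same germ at $\infty$ as $\beta$. Then $\beta-\gamma$ vanishes near $\infty$, i.e.\ is compactly supported, so $\beta-\gamma\in\Vc_\cb$, while $D^n(\beta-\gamma)=D^n\beta=\tilde\xi$; this exhibits $\xi$ in the image of $\Vc_\cb$, as required. (A more self-contained alternative for this step is to run the iterated integration from the proof of \ref{ss:1stdesc} with choice functions selecting, at each level, dual cells incident to $\infty$: since $\tilde\xi$ has no $\infty$-based components, a downward induction shows each intermediate cochain vanishes near $\infty$, so the resulting primitive $f$ with $D^nf=\xi$ is already compactly supported.)
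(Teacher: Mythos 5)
Your overall skeleton follows the paper: reduce to finite $\Ac$, compactify to a pseudoaffine arrangement on $\Sb^n$, embed $\Fc^\cb_n$ into $\Fc_n$ by zero on the $\infty$-based flags, and verify that the extension $\tilde\xi$ satisfies all reciprocity laws and period vanishing conditions of the sphere arrangement so that \ref{ss:1stdesc} applies; your hand check of the easy inclusion is also fine (the paper gets it directly from \ref{ss:1stdesc} via the lemma preceding the theorem). The divergence, and the problem, is in your main mechanism for the crucial last step, namely producing a compactly supported primitive of $\tilde\xi$. You assert that the germ of $\beta$ at $\infty$ lies in the ``degree $<n$'' part of a ``central arrangement $\bar\Ac_\infty$'' and that this part is spanned by germs of line-containing $\Ac$-polyhedra, citing \ref{ss:val-polyh}. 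But \ref{ss:val-polyh} never proves (it only implicitly uses) even the global statement that $\Vc_{<n}$ is spanned by indicators of line-containing polyhedra, and what you need is a local statement at the compactification point, which does not follow from the global one: your germ space is cut out only by the $\infty$-based iterated differences, and the object $\bar\Ac_\infty$ is not an arrangement of the paper's type as soon as $\Ac$ contains parallel hyperplanes (their closures meet only at $\infty$ and tangentially there; the germ at $\infty$ of a strip is not a ``cone'', so ``line-containing cone of $\bar\Ac_\infty$'' is not well defined). That local spanning claim carries essentially the full difficulty of the implication being proved, so as written the main route has a genuine gap.

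Your parenthetical alternative, however, is precisely the paper's proof: the paper integrates $n$ times using a choice function $\phi$ with $\phi(\Lc)$ in an unbounded cell of $\Ac|_{[\Lc]}$ (i.e.\ the cell incident to $\infty$), so that, since $\tilde\xi$ has no $\infty$-based components, each intermediate primitive vanishes near $\infty$ and the final primitive is compactly supported, hence lies in $\Vc_\cb$. Promote that parenthesis to the body of your argument, including the downward induction that the intermediate cochains vanish near $\infty$, and your proof is correct and essentially identical to the paper's.
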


\begin{proof}
Consider $\al\in\Fc^\cb_n$.
Since only finitely many hyperplanes are needed to define $\al$, we may assume that $\Ac$ is finite.
Let $X$ be the sphere $\Sb^n=\R^n\cup\{\infty\}$; then $\Ac$ defines a
 pseudoaffine topological arrangement in $X$ denoted by the same symbol $\Ac$.
Embed $\Fc^\cb_n$ into $\Fc_n$ by setting, for every $f\in\Fc^\cb_n$, the
 value of $f$ at any complete flag based at $\infty$ to zero.
Then $\Vc_\cb$ also embeds into $\Fc_n$.
By \ref{ss:1stdesc}, if $\al\in\Vc_\cb$, then $\al$ satisfies the reciprocity laws and the period vanishing conditions.
Assume now that $\al\in\Fc^\cb_n$ satisfies the reciprocity laws and the period vanishing conditions.
Integration map of \ref{ss:integr} applied $n$ times to $\al$ then yields an element of $\Vc$, by \ref{ss:1stdesc}.
Moreover, integration can be performed so that the latter element vanishes on a neighborhood of infinity
 (to this end, it suffices to take a choice function $\phi$ with the property that
 $\phi(\Lc)$ lies in an unbounded $\dim[\Lc]$-cell of $\Ac|_{[\Lc]}$, for every $\Lc$);
 it is then an element of $\Vc_\cb$ corresponding to $\al$.
\end{proof}

For the case where $\Ac$ consists of all affine hyperplanes in $\R^n$,
 the above result is equivalent to \cite[Theorem 3.5]{Dup01}.
Methods of Section \ref{s:linalg} now yield a description of $\Val(\Pc_\Ac;\mathsf{0})=\Vc^*_\cb$.

\begin{thm*}
The space $\Val(\Pc_\Ac;\mathsf{0})$
 of simple valuations on $\Ac$-polytopes
 is generated by Hadwiger invariants $\Upsilon_\Lc$ associated with oriented complete flags $\Lc\in\Lambda^\Oc_n$.
Relations between these invariants are generated by the reciprocity laws and the period vanishing conditions.
\end{thm*}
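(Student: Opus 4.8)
The plan is to obtain this dual statement from the subspace description of $\Vc_\cb\subset\Fc^\cb_n$ just established, by feeding it into the linear-algebra package of Section \ref{s:linalg}, exactly as the main theorem was deduced from \ref{ss:descVck}. Since $\Val(\Pc_\Ac;\mathsf{0})=\Vc^*_\cb$ and $\Vc_\cb$ sits inside $\Fc^\cb_n$, I would regard $\Vc_\cb$ as a subspace of $\Q[\Omega]$, where $\Omega:=\Lambda^\Oc_n$ is the set of oriented complete flags of $\Ac$ and $\Fc^\cb_n$ is the subspace of alternating functions on $\Omega$. All flags in play are complete, of rank $n$, so --- in contrast with the main theorem --- there is a single rank, and no analog of the reduction in \ref{ss:rankk} is needed.

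For the generation half I would copy the argument of \ref{ss:gen-pf}: by \ref{ss:newlook}, the restriction of $\Upsilon_\Lc$ to $\Vc_\cb\subset\Fc^\cb_n$ is, under our normalization (a point carries total volume one), the evaluation $\al\mapsto\al(\Lc)$. Thus the $\Upsilon_\Lc$ are the images of the coordinate functionals under the surjection $\Q^\Omega=\Q[\Omega]^*\to\Vc^*_\cb$, and since the coordinate functionals span $\Q^\Omega$ under possibly infinite linear combinations, their images $\Upsilon_\Lc$ span $\Vc^*_\cb$. Concretely, after extending $\mu\in\Vc^*_\cb$ to a functional on $\Fc^\cb_n$, the identity of \ref{ss:newlook} gives $\mu=\tfrac12\sum_{\Lc\in\Lambda^\Oc_n}\mu(\1_\Lc)\Upsilon_\Lc$.

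For the relations, a relation among the $\Upsilon_\Lc$ is a function $f\in\Q^\Omega$ with $f\in\Vc_\cb^\perp$. I would take as a defining family of relations for $\Vc_\cb\subset\Q[\Omega]$ the reciprocity laws, the period vanishing conditions, and the relations defining alternating functions on $\Omega$; the first two carve $\Vc_\cb$ out of $\Fc^\cb_n$ by the theorem of \ref{ss:val-polyt}, while the third carves $\Fc^\cb_n$ out of $\Q[\Omega]$, so together they cut out $\Vc_\cb$. The indices $l$ of the first two groups may be taken to be the partial flags obtained by deleting one flat of positive codimension from a complete flag. This family is locally finite: a fixed complete flag $\Lc=(L^0\supset\dots\supset L^n)$ occurs in exactly one period vanishing condition (the one at $\Gamma\Lc$), in at most $n-1$ reciprocity laws (those at $\Lc\sm\{L^m\}$ with $0<m<n$), and in exactly one of the alternating relations. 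The theorem of \ref{ss:rels} then writes any $f\in\Vc_\cb^\perp$ as a possibly infinite rational combination of these; the alternating relations only record $\Upsilon_{-\Lc}=-\Upsilon_\Lc$, so, discarding them, every relation among the Hadwiger invariants is generated by reciprocity laws and period vanishing conditions.

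The hard part is not in this deduction. The substantive work has already been done: the subspace description of $\Vc_\cb$ in \ref{ss:val-polyt} (which rests on the pseudoaffine case \ref{ss:1stdesc} and the integration operator of Section \ref{s:rels}), together with the Tychonoff-type compactness theorem proved in \ref{ss:comp}--\ref{ss:rels-proof}. The only point that genuinely demands care is the bookkeeping above --- verifying that the chosen family of relations is locally finite and that it cuts $\Vc_\cb$ out of the ambient $\Q[\Omega]$, not merely out of $\Fc^\cb_n$. With that granted, the theorem follows formally.
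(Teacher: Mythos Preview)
Your proposal is correct and follows exactly the approach the paper intends: the paper's own proof is the single sentence ``Methods of Section~\ref{s:linalg} now yield a description of $\Val(\Pc_\Ac;\mathsf{0})=\Vc^*_\cb$,'' and you have spelled out precisely those methods --- the generation argument from \ref{ss:newlook}--\ref{ss:gen-pf} and the relations argument from \ref{ss:rels}, including the local-finiteness check and the inclusion of the alternating relations to pass from $\Fc^\cb_n$ down to $\Q[\Omega]$. Your observation that no rank-separation step (analogous to \ref{ss:rankk}) is needed here, since only complete flags occur, is also correct.
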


\section{Toric and pseudotoric arrangements}
\label{s:tor}
\def\ps{\mathsf{p}}
Recall that the integer lattice $\Z^n$ acts on the affine space $\R^n$ by 
 translations:
 given $v\in\Z^n$ and $q\in\R^n$, we write $q+v$ for the parallel translate of $q$ by the vector $v$.
A \emph{rational hyperplane} in $\R^n$ is by definition an affine hyperplane $\hat H$
 that coincides with the affine hull of $\Z^n\cap \hat H$, equivalently, is such
 that $\Z^n\cap \hat H$ is a lattice in $\hat H$.
Rational hyperplanes can also be characterized as those defined over $\Q$.

\subsection{Toric arrangements}
Let now $X$ be the real $n$-torus $\T^n=\R^n/\Z^n$.
Write $\ps:\R^n\to X$ for the quotient projection.
Define a \emph{rational hyperplane in $X$} as the $\ps$-image of a rational hyperplane in $\R^n$.
Note that all rational hyperplanes in $X$ pass through the same special point $o$
 that coincides with $\ps(v)$ for every $v\in\Z^n$.
A finite set $\Ac$ of rational hyperplanes in $X$ is a \emph{toric hyperplane arrangement} if,
 for every component $U$ of $X\sm\bigcup\Ac$, every component of $\ps^{-1}(U)$ is bounded
 (hence it is the relative interior of some convex polytope in $\R^n$).

Let $\hat\Ac$ be the set of all affine hyperplanes $\hat H\subset\R^n$ such that $\ps(\hat H)\in\Ac$.
Then $\hat\Ac$ is not finite but \emph{locally finite}, that is, every compact subset of $\R^n$
 intersects only finitely many hyperplanes from $\hat\Ac$.
As a rule, the hats over symbols will mean lifts to $\R^n$ of geometric objects from $X$ denoted by
 these symbols. 
Note that the minimal by inclusion $\hat\Ac$-polyhedra are precisely
 components of $\ps^{-1}(U)$, where $U$ ranges through the regions of $\Ac$.

\subsection{Pseudotoric arrangements}
\label{ss:pseudotoric}
The following notion generalizes that of toric arrangements.
Let $(X,\Ac)$ be a topological arrangement such that all flats of $\Ac$ are homeomorphic to compact tori $\R^m/\Z^m$.
In particular, since $X$ is a flat, it also identifies with a torus of dimension $n$.
Call $\Ac$ a \emph{pseudotoric arrangement}.
Even though the most interesting for us case is that of genuine toric arrangements,
 there are many pseudotoric arrangements that are not toric, even up to a homeomorphism.

From now on,
 we assume that $(X,\Ac)$ is pseudotoric.
All arguments are topological; for this reason, assuming that $\Ac$ is genuine toric will
 not yield any essential simplification.
Notation $\Vc_k$, $\Fc_k$, $E^k$, etc., introduced for general topological arrangements,
 is applicable to $X$.

\subsection{The space $\Vc_1$}
\label{ss:Vc1}
Recall that $\Fc_1$ is naturally a subspace of $E^1$, moreover,
 $\iota^{*}\xi\in C^1$ is automatically a cocycle, for every $\xi\in\Fc_1$; see \ref{ss:cocycle}.
Clearly, $\xi\in\Vc_1$ if and only if $\iota^{*}\xi$ is a coboundary,
 and the latter is equivalent to saying that $\xi$ satisfies the period vanishing conditions, see \ref{ss:pervan}.

\begin{lem*}
Elements of $\Fc_1$ represent all cohomology classes in $\Hs^1(X;\Q)$.
In other words, for any cocycle $\eta\in E^1$ there exists $\xi\in\Fc_1$ such that $\eta+\xi$
 satisfies the period vanishing conditions.
\end{lem*}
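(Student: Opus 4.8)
The plan is to reduce the statement to surjectivity of a single linear map and then to a standard fact about the torus $X$. Recall from \ref{ss:Vc1} and \ref{ss:pervan} that an element $\zeta\in E^1$ satisfying the cocycle condition satisfies the period vanishing conditions exactly when $\iota^{*}\zeta\in C^1$ is a coboundary, i.e.\ when the class $[\iota^{*}\zeta]\in\Hs^1(X;\Q)$ vanishes (here $\iota$ is the isomorphism of \ref{p:iso} and $C^{\bullet}$ is the cochain complex computing $\Hs^{\bullet}(X;\Q)$). By \ref{ss:cocycle} every $\xi\in\Fc_1$ satisfies the cocycle condition, while a cocycle $\eta\in E^1$ is precisely one for which $\iota^{*}\eta$ is a $1$-cocycle. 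Hence, for such $\eta$ and $\xi\in\Fc_1$, the sum $\eta+\xi$ satisfies the period vanishing conditions if and only if $[\iota^{*}\xi]=-[\iota^{*}\eta]$ in $\Hs^1(X;\Q)$; and as $[\iota^{*}\eta]$ runs over all of $\Hs^1(X;\Q)$, while $\xi\mapsto[\iota^{*}\xi]$ is $\Q$-linear, the assertion is equivalent to surjectivity of
$$
\Phi\colon\Fc_1\longrightarrow\Hs^1(X;\Q),\qquad \Phi(\xi)=[\iota^{*}\xi]
$$
(whose kernel, by \ref{ss:Vc1}, is exactly $\Vc_1$).

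The next step is to compute $\Phi$ on the generators $\1_{\Lc}$ of $\Fc_1$, where $\Lc=(X\supset L)$ runs over the cooriented codimension-one flats of $\Ac$ (cooriented hyperplanes, in the genuinely toric case); see \ref{ss:newlook}. Since the arrangement is pseudotoric, hence orientable by \ref{ss:ori}, the cocycle $\iota^{*}\1_{\Lc}\in C^1$ takes the value $\pm1$ on each dual $1$-cell that crosses $L$ (the sign comparing the orientation of the cell with the chosen coorientation of $L$) and $0$ on all other dual $1$-cells; paired with a $1$-cycle $\gamma$ it returns the intersection index $\gamma\cdot L$. Thus $\Phi(\1_{\Lc})$ is the Poincar\'e dual $\mathrm{PD}[L]\in\Hs^1(X;\Q)$ of the cooriented fundamental class $[L]\in\Hs_{n-1}(X;\Q)$, so that $\im\Phi=\mathrm{span}_{\Q}\{\mathrm{PD}[L]:L\ \text{a codimension-one flat}\}$. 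By Poincar\'e duality on the closed oriented $n$-manifold $X\cong\T^n$, surjectivity of $\Phi$ is equivalent to the statement that the classes $[L]$ span $\Hs_{n-1}(X;\Q)$.

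I would prove this spanning as follows. Put $X^{\circ}:=X\sm\bigcup\Ac$; by the axioms of a topological arrangement its components are the top-dimensional cells of $X$, each an open $n$-ball, so $\Hs^{\mathrm{BM}}_j(X^{\circ};\Q)=0$ for $j<n$. Combining the natural isomorphism $\Hs_j(X,\bigcup\Ac;\Q)\cong\Hs^{\mathrm{BM}}_j(X^{\circ};\Q)$ with the long exact sequence of the pair $(X,\bigcup\Ac)$ gives that $\Hs_{n-1}(\bigcup\Ac;\Q)\to\Hs_{n-1}(X;\Q)$ is onto. It then suffices to see that this image is spanned by the $[L]$: choosing a triangulation of $X$ in which every flat is a subcomplex, and using that each $(n-1)$-dimensional cell of $\bigcup\Ac$ lies in a unique codimension-one flat, one writes an $(n-1)$-cycle $z$ on $\bigcup\Ac$ as $\sum_L z_L$ with $z_L$ supported on $L$; then $\partial z_L$ is supported on the flats of codimension $\ge2$, and an induction on codimension, each flat being again a torus, brings the class of $z$ in $\Hs_{n-1}(X;\Q)$ into $\mathrm{span}\{[L]\}$. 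For a genuine toric arrangement this last point is elementary: $[L]$ is, up to sign, the primitive integer normal $\nu_L$ of the defining rational hyperplane under the identification $\Hs_{n-1}(\T^n;\Z)\cong\Z^n$, and the defining requirement that every region of $\Ac$ be bounded says exactly that the $\nu_L$ span $\Q^n$.

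I expect the main obstacle to be this final identification of the image of $\Hs_{n-1}(\bigcup\Ac;\Q)$ in $\Hs_{n-1}(X;\Q)$ with $\mathrm{span}\{[L]\}$. Should the chain-level induction turn out cumbersome, two alternatives are available: run the Mayer--Vietoris spectral sequence for the covering of $\bigcup\Ac$ by its codimension-one flats and check that the classes arising from lower intersections still map into $\mathrm{span}\{[L]\}$ (each such intersection is a subtorus of $X$ lying inside a codimension-one flat); or argue dually, by a direct general-position homotopy, that any embedded circle $\gamma\subset X$ with $\gamma\cdot L=0$ for every codimension-one flat $L$ can be pushed off $\bigcup\Ac$ into a single contractible region, hence is null-homotopic and $\gamma$ is trivial in $\Hs_1(X;\Q)$.
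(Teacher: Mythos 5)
Your reduction of the lemma to the statement that the Poincar\'e duals of the hyperplanes span $\Hs^1(X;\Q)$ (equivalently, that the classes $[L]$ of the codimension-one flats span $\Hs_{n-1}(X;\Q)$) is correct and coincides with the paper's first step, and your step (a) --- surjectivity of $\Hs_{n-1}(\bigcup\Ac;\Q)\to\Hs_{n-1}(X;\Q)$ via the pair $(X,\bigcup\Ac)$ and the fact that the regions are open $n$-cells --- is sound; so is your remark that for \emph{genuine} toric arrangements the primitive normals span because all regions are bounded. But the lemma is asserted for pseudotoric arrangements, and there your argument breaks at exactly the step you flag: the claim that the image of $\Hs_{n-1}(\bigcup\Ac;\Q)$ lies in $\mathrm{span}\{[L]\}$. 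Given (a), that claim is \emph{equivalent} to the lemma, so it cannot be dispatched by the sketched chain-level induction. Concretely, writing $z=\sum_L z_L$, the relative class of $z_L$ in $\Hs_{n-1}(L,A_L;\Q)\cong \Hs^0(L\sm A_L;\Q)$ (with $A_L$ the union of the lower flats inside $L$) is a weight function on the open $(n-1)$-cells of $L$ which need not be constant; only when it is constant can you subtract a multiple of $[L]$ and push the remainder into the codimension-two skeleton. Non-constant weights are precisely the cocycles in $E^1$ that do not lie in $\Fc_1$, i.e.\ precisely the situation the lemma is about, so the induction begs the question.

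Moreover, the claim is not a formal consequence of the ingredients you invoke (transversely intersecting closed cooriented hypersurfaces whose flats are manifolds, even tori): on a closed genus-two surface one can choose two simple closed curves meeting at two points so that the ``connecting'' cycle made of one arc of each has homology class outside the span of the two curve classes --- all flats there are circles and points, yet the conclusion fails. Hence the global hypothesis that the atoms are cells (in the toric case, that regions lift to bounded polytopes) must enter beyond step (a), and none of your three routes uses it at that point: the Mayer--Vietoris variant faces the same connecting classes (being \emph{supported inside} a codimension-one flat does not make an $(n-1)$-class a multiple of that flat's fundamental class unless it is a cycle in the flat), and the fallback ``push $\ga$ off $\bigcup\Ac$'' asserts without proof exactly the dual of the statement: vanishing intersection indices do not by themselves let you cancel intersection points without creating new ones with other hyperplanes. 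The paper closes this gap differently: it proves the dual statement by lifting a loop $\ga$ with $\ga\cdot H=0$ for all $H$ to the universal cover $\R^n$, noting that the vanishing indices force both endpoints of the lift to lie on the same side of every component of the preimage of each hyperplane, hence in the same minimal region; since that region maps injectively to $X$, the lift closes up and $\ga$ is null-homotopic. Some argument of this kind, exploiting the cell structure of the regions, is what your step (b) still requires.
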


\begin{proof}
Firstly, consider a cooriented hyperplane $H$ from $\Ac$.
The cohomology class $[H]$ of $H$, that is, the Poincar\'e dual of the fundamental homology class of $H$,
 is represented by a function $\xi_H\in\Fc_1$ that is equal to $\pm 1$ on $H$
 (plus or minus depends on the coorientation) and to 0 on all other hyperplanes from $\Ac$.
It now suffices to show that the cohomology classes of hyperplanes from $\Ac$ generate $\Hs^1(X)$.
Dually, one needs to prove that a first homology class orthogonal to $[H]$ for all $H\in\Ac$ is necessarily trivial.
We now use that $X$ is a torus: the space $\Hs_1(X)$ is isomorphic to $\pi_1(X,b)\otimes\Q$,
 where $b$ is some base point that lies outside of $\bigcup\Ac$, say, one can take
 the center of any $n$-cell of $\Ac$.
Consider a PL loop $\ga$ based at $b$ that is transverse to all hyperplanes from $\Ac$
 and that has zero intersection index with any of these hyperplanes.
Our objective is to show that $\ga$ is then trivial.

Let $\tilde{\ga}$ be a lift of the path $\ga$ to the universal covering space $\R^n$ of $X$, that is,
 $\ps\circ\tilde\ga=\ga$.
Fix a hyperplane $H\in\Ac$ together with some coorientation of it.
The total number of intersection points between $\ga$ and $H$, counted with signs, is equal to zero.
It can be easily deduced that, for every component $\hat H$ of $\ps^{-1}(H)$, the endpoints of $\tilde\ga$
 are on the same side of $\hat H$.
Since this holds for every $H\in\Ac$,
 both endpoints of $\tilde{\ga}$ belong to the same minimal by inclusion $\hat\Ac$-polytope $\hat U$,
 which therefore projects onto a cell $U=\ps(\hat U)$ of $\Ac$.
But $\ps$ is one-to-one on $\hat U$, while both endpoints of $\tilde{\ga}$ map to the same point $b\in U$.
Hence, the two endpoints of $\tilde{\ga}$ coincide, and $\tilde{\ga}$ is a loop;
 the latter means that $\ga$ is trivial, as claimed.
\end{proof}

We can now compute the dimension of $\Vc_1$.

\begin{cor*}
If $N$ is the cardinality of $\Ac$, then $\dim\Vc_1=N-n$.
\end{cor*}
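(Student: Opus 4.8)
The plan is to produce a short exact sequence
$$
0 \to \Vc_1 \to \Fc_1 \to \Hs^1(X;\Q) \to 0
$$
and to count dimensions. First I would record that $\Fc_1$ has dimension exactly $N$. Since $X$ is a torus and every flat of $\Ac$ is a torus — in particular orientable — while $X$ itself is orientable, every flat, and so every hyperplane, is coorientable; thus $(X,\Ac)$ is an orientable arrangement in the sense of \ref{ss:ori}. By \ref{ss:Fck} one has $\Fc_1=\bigoplus_{\Lc\in\Lambda_1}\Hs^0([\Lc];\Oc_{[\Lc]})$, and $\Lambda_1$ is in natural bijection with the set of hyperplanes of $\Ac$: a flag in $\Lambda_1$ is $X=L^0\supset L^1$ with $L^1$ a codimension-one flat, i.e. a hyperplane. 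As each hyperplane $H$ is connected and coorientable, $\Hs^0(H;\Oc_H)\cong\Q$, whence $\dim_\Q\Fc_1=N$.

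Next I would identify the two maps in the sequence. By \ref{ss:cocycle}, every $\xi\in\Fc_1$ satisfies the cocycle condition (for $k=1$ this is automatic), so $\iota^{*}\xi\in C^1$ is a cocycle and $\xi\mapsto[\iota^{*}\xi]$ defines a linear map $\Phi\colon\Fc_1\to\Hs^1(X;\Q)$. By \ref{ss:Vc1}, $\xi\in\Vc_1$ precisely when $\iota^{*}\xi$ is a coboundary, i.e. $\ker\Phi=\Vc_1$; and the preceding lemma says exactly that $\Phi$ is surjective. Hence
$$
\dim_\Q\Vc_1=\dim_\Q\Fc_1-\dim_\Q\Hs^1(X;\Q)=N-\dim_\Q\Hs^1(\T^n;\Q)=N-n,
$$
using that $X\cong\T^n$, so that $\Hs^1(X;\Q)\cong\Q^n$.

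There is no serious obstacle once the lemma is in hand; the two points that need care are that $\dim_\Q\Fc_1$ is exactly $N$ — which uses coorientability of all hyperplanes together with connectedness of flats (built into the definition of a flat) — and that $\ker\Phi$ is precisely $\Vc_1$ and not something larger, which is the content of the period-vanishing characterization of $\Vc_1$ recorded in \ref{ss:Vc1}.
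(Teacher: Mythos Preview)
Your proof is correct and takes essentially the same approach as the paper: define the linear map $\Phi:\Fc_1\to\Hs^1(X;\Q)$ by $\xi\mapsto[\iota^{*}\xi]$, identify its kernel as $\Vc_1$ via the period-vanishing characterization, cite the lemma for surjectivity, and count dimensions. You are simply more explicit than the paper in justifying $\dim\Fc_1=N$ and $\dim\Hs^1(X;\Q)=n$.
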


\begin{proof}
Consider the linear map from $\Fc_1$ onto $\Hs^1(X)$ taking each $\xi\in\Fc_1$
 to the cohomology class of the 1-cocycle $\iota^{*}\xi$; this map is 
 onto by the above theorem.
The kernel of this map coincides with $\Vc_1$, by \ref{ss:pervan}.
By general linear algebra, the dimension of the kernel equals the dimension $N$ of $\Fc_1$,
 minus the dimension $n$ of the image.
\end{proof}

In particular, we must have $N\ge n$, for every pseudotoric arrangement $\Ac$.
The reason is, any number $N<n$ of hyperplanes are not enough to separate $X$ into cells,
 i.e., are not enough to generate $\Hs^1(X)$.

\subsection{An abstract lemma}
\label{ss:abstract}
Consider a finite index set $\Lambda$ and a $\Lambda$-graded $\Q$-vector space $V:=\bigoplus_{\la\in\Lambda} V_\la$.
A linear relation of the form $\sum_{\la\in\Lambda} c_\la v_\la=0$ is called a \emph{$\Lambda$-relation}.
Here $v_\la\in V_\la$ are the components of a vector $v\in V$ satisfying the given relation,
 and $c_\la\in\Q$ are some coefficients.
Observe that it makes sense to talk about \emph{the same} $\Lambda$-relation in different
 $\Lambda$-graded vector spaces.
Now consider two $\Lambda$-graded vector spaces $U$ and $V$.
Define a \emph{$\Lambda$-diagonal linear map} as a linear map $f:U\to V$ such that $f(U_\la)\subset V_\la$,
 for every $\la\in\Lambda$.
It is easy to see that a $\Lambda$-diagonal linear map is surjective if
 and only if so is every restriction $f_\la:=f|_{U_\la}$.

\begin{lem*}
Let $f:U\to V$ be a $\Lambda$-diagonal linear map between $\Lambda$-graded vector spaces $U$ and $V$.
Choose $v\in f(U)$ satisfying a given set of $\Lambda$-relations.
Then there is a vector $u\in U$ satisfying the same $\Lambda$-relations and such that $f(u)=v$.
\end{lem*}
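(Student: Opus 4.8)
The plan is to start from an arbitrary preimage of $v$ under $f$ and then correct it by an element of $\ker f$ so that all the prescribed $\Lambda$-relations become satisfied; the correction is available because $\ker f$ inherits the grading. First I would record the latter fact: since $f$ is $\Lambda$-diagonal, one has
\[
\ker f=\bigoplus_{\la\in\Lambda}K_\la,\qquad K_\la:=\ker f\cap U_\la=\ker(f|_{U_\la}),
\]
so that membership of a vector in $\ker f$ can be tested componentwise. Since $v\in f(U)$, choose any $u_0\in U$ with $f(u_0)=v$; then its components satisfy $f|_{U_\la}\bigl((u_0)_\la\bigr)=v_\la$ for all $\la$, and every other preimage of $v$ differs from $u_0$ by an element of $\ker f$. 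Denoting by $U_R\subseteq U$ and $V_R\subseteq V$ the subspaces cut out by the prescribed family of $\Lambda$-relations, the task becomes to produce $w\in\ker f$ with $u_0-w\in U_R$.

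Next I would treat the relations one at a time. Fix a prescribed relation with coefficients $(c_\la)_{\la\in\Lambda}$. Forming the $(c_\la)$-combination of the components of $u_0$ and applying $f$, which, being $\Lambda$-diagonal, commutes with this operation, returns $\sum_\la c_\la v_\la=0$ by hypothesis; hence the vector whose $\la$-component is $c_\la(u_0)_\la$ lies in $\ker f$. By the componentwise characterization of $\ker f$ above, this forces $c_\la(u_0)_\la\in K_\la$ for every $\la$, and in particular $(u_0)_\la\in K_\la$ whenever $\la$ occurs with nonzero coefficient in at least one prescribed relation. I would then define $w\in\ker f$ to agree with $u_0$ on exactly those indices $\la$ and to vanish on all other indices; this is a legitimate element of $\ker f$ precisely because $\ker f$ contains the whole summand $K_\la$ for each such $\la$. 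The vector $u:=u_0-w$ then still satisfies $f(u)=v$, while $u_\la=0$ for every index occurring in any prescribed relation, so $\sum_\la c_\la u_\la=0$ for each of them, i.e. $u\in U_R$, as required.

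The only genuine point in the argument, and the step I would be most careful about, is the implication ``the vector with components $c_\la(u_0)_\la$ lies in $\ker f$ $\Rightarrow$ each $c_\la(u_0)_\la$ lies in $K_\la$'': this is exactly where the $\Lambda$-gradedness of $\ker f$ --- that it is a direct sum over $\la$, not merely a subspace --- is used, together with the fact that forming $(c_\la)$-combinations of components commutes with a $\Lambda$-diagonal map. Everything else is bookkeeping, and the construction of $w$ manifestly depends only on $\ker f$, not on any choice of complement.
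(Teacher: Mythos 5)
Your argument is internally consistent only under a literal reading of ``$\sum_{\la}c_\la v_\la=0$'' as an identity in the direct sum $V=\bigoplus_\la V_\la$, and that reading trivializes the lemma: since the $c_\la v_\la$ lie in independent summands, the hypothesis would already say $v_\la=0$ for every index occurring in a relation, and one could simply lift those components by $0$. This is exactly the step you single out as the crux (``the vector with components $c_\la(u_0)_\la$ lies in $\ker f$, hence each $c_\la(u_0)_\la\in K_\la$''), and it is where the proof fails for the statement as it is actually meant and used. In the application (see \ref{ss:tork<n}), $\Lambda$ is a set of flags, the summands entering one relation are identical copies of a single space (the paper stresses that $C^{(l)}_k$ ``includes many identical copies of $C^L_k$''; a reciprocity law at $\Lc^\circ$ sums components over flags sharing the same base flat), and the relation asserts the vanishing of the sum computed in that common space. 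Such a relation does not decouple: for instance $v_{\la_1}-v_{\la_2}=0$ with $v_{\la_1}=v_{\la_2}\ne 0$ satisfies it, yet no single term vanishes. For such relations your correction vector is not in $\ker f$, the components $(u_0)_\la$ need not lie in $K_\la$, the constructed $u$ need not satisfy $f(u)=v$, and your stronger conclusion that one may take $u_\la=0$ on every index touched by a relation is simply false (the example just given forbids it). Indeed, if the decoupled reading were the intended one, the lemma would be inapplicable at the very place it is invoked: there the ``relation satisfied by $v$'' is a reciprocity law among cohomology classes that are typically nonzero.

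The paper's proof treats the relations as a genuine linear system over $\Q$ in the components and solves it: split $\Lambda=\Lambda'\sqcup\Lambda''$ so that the relations are equivalent to formulas expressing each $u_{\la'}$, $\la'\in\Lambda'$, as a rational combination of the $u_{\la''}$, $\la''\in\Lambda''$ (Gaussian elimination); for the free indices choose $u_{\la''}\in U_{\la''}$ with $f(u_{\la''})=v_{\la''}$, which is possible because $v\in f(U)$ and $f$ is $\Lambda$-diagonal, so $v_\la\in f(U_\la)$ for every $\la$; then define the pivot components by the solved relations. The resulting $u$ satisfies the relations by construction, and since $f$ is $\Lambda$-diagonal (acting by one and the same map on the identified copies involved in a relation), $f$ commutes with forming these rational combinations, so $f(u)$ satisfies the same relations; its pivot components are therefore determined by its free components, which agree with those of $v$, giving $f(u)=v$. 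Your observation that $\ker f=\bigoplus_\la K_\la$ is correct and harmless, but the missing idea is to solve the relations for a set of pivot indices rather than to annihilate every index they involve.
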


\begin{proof}
Firstly, the given set of $\Lambda$-relations can be solved with respect to a subset of $\Lambda$,
 that is, the given relations are equivalent to those expressing $u_{\la'}$ for $\la'\in\Lambda'$ through $u_{\la''}$ for $\la''\in\Lambda''$,
 where $\Lambda=\Lambda'\sqcup \Lambda''$ is a disjoint union representation.
In other words, if vectors $u_{\la''}\in U_{\la''}$ are chosen arbitrarily for all $\la''\in\Lambda''$,
 then the vectors $u_{\la'}$ for all $\la'\in\Lambda'$ can be uniquely recovered, using
 the given $\Lambda$-relations.
Secondly, for every $\la''\in\Lambda''$, choose $u_{\la''}$ so that $f(u_{\la''})=v_{\la''}$ and otherwise arbitrarily.
Recover $u_{\la'}$ for all $\la'\in\Lambda'$ using the given $\Lambda$-relations.
Since $f(u)$ satisfies the same $\Lambda$-relations, it follows that $f(u)_{\la'}=v_{\la'}$ for all $\la'\in\Lambda'$.
\end{proof}

The lemma just proved will be applied to the subspace of cocycles in $C^k_{(l)}$,
 which is a $\Lambda$-graded vector space with $\Lambda=\Lambda_l$, and to the $\Lambda$-diagonal
 linear map sending each cocycle to its cohomology class.

\subsection{Description of $\Vc_k$}
\label{ss:tork<n}
Go back to a pseudotoric arrangement $(X,\Ac)$.
Below, we describe the quotients $\Vc_k$ of the degree filtration.
Recall that $\Vc_k$ identifies with a vector subspace of $\Fc_k$, the space of all
 alternating functions on the set $\Lambda^\Oc_k$ of oriented flags.
In the following theorem, period vanishing conditions are understood in the sense of \ref{ss:pervan}
 (which is consistent with \ref{ss:pervanEk}).

\begin{thm*}
The subspace $\Vc_k\subset\Fc_k$ is formed by those alternating functions on $\Lambda^\Oc_k$
 that satisfy the reciprocity laws and the period vanishing conditions.
\end{thm*}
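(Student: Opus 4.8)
The plan is to reuse the descent-by-integration strategy of the pseudoaffine case (Sections \ref{ss:desc-Vck} and \ref{ss:1stdesc}), inserting a cohomological correction at each step to compensate for the fact that the flats are now tori rather than high-dimensional spheres. One inclusion is already available: every element of $\Vc_k$ satisfies all the reciprocity laws by \ref{t:recip}, and all the period vanishing conditions by \ref{ss:pervan}. For the converse it suffices to prove, by induction on $j$, the following statement: \emph{if $\xi\in E^j$ satisfies the cocycle condition, the period vanishing conditions, and the reciprocity laws of all types $(m,j)$ with $0<m<j$, then $\xi=D^j f$ for some $f\in E^0=\Vc$.} Indeed, an element $\xi\in\Fc_k$ obeying the reciprocity laws and the period vanishing conditions also satisfies the cocycle condition by \ref{ss:cocycle}, so the case $j=k$ gives $\xi=D^k f$; then $D^{k+1}f=D\xi=0$ puts $f$ in $\Vc_{\le k}$, whence $\xi$ lies in the image of $\Vc_k$ in $\Fc_k$ (cf. \ref{ss:emb-thm}), as required.

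The base $j=0$ is trivial. For the inductive step, let $\xi\in E^j$ be as in the claim. By \ref{ss:pervan}, $\xi\in DE^{j-1}$, so the integration map of \ref{ss:integr} yields a well-defined $\eta:=\Is_\phi\iota^{*}\xi\in E^{j-1}$ with $D\eta=\xi$; by the lemma on properties of the integrals (\ref{ss:intrec}), this $\eta$ satisfies the cocycle condition and the reciprocity laws of all types $(m,j-1)$ with $0<m<j-1$. The only hypothesis that may fail for $\eta$ is period vanishing, and I would repair it by adding a suitable cocycle: the claim is that there exists $\zeta\in\Fc_{j-1}$ (so $D\zeta=0$, hence $D(\eta+\zeta)=\xi$) such that $\eta+\zeta$ satisfies the period vanishing conditions and $\zeta$ satisfies the reciprocity laws of all types $(m,j-1)$ with $0<m<j-1$. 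Granting this, $\eta+\zeta$ satisfies the cocycle condition (both summands do, by \ref{ss:intrec} and \ref{ss:cocycle}) as well as all the relevant reciprocity laws and period vanishing conditions, so the inductive hypothesis in degree $j-1$ gives $\eta+\zeta=D^{j-1}f$ with $f\in\Vc$, whence $\xi=D(\eta+\zeta)=D^j f$.

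It remains to construct $\zeta$, which I expect to be the main obstacle. Since $\eta$ satisfies the cocycle condition, $\iota^{*}\eta$ is a cocycle, and the failure of period vanishing for $\eta$ is recorded by the cohomology classes $[(\iota^{*}\eta)_{\Lc'}]\in\Hs^1([\Lc'];\Oc_{[\Lc']})$, one for each $\Lc'\in\Lambda_{j-2}$. The idea is to apply the abstract lemma of \ref{ss:abstract}: view $\Fc_{j-1}$ as graded by $\Lambda_{j-2}$ via $\Lc\mapsto\Gamma\Lc$, and consider the $\Lambda_{j-2}$-diagonal linear map
$$
g\colon\ \Fc_{j-1}\ \longrightarrow\ \bigoplus_{\Lc'\in\Lambda_{j-2}}\Hs^1\bigl([\Lc'];\Oc_{[\Lc']}\bigr),\qquad \zeta\ \longmapsto\ \bigl([(\iota^{*}\zeta)_{\Lc'}]\bigr)_{\Lc'}.
$$
This map is surjective: on the block of a flag $\Lc'$ it sends the distinguished generators $\1_\Lc$ (with $\Gamma\Lc=\Lc'$) to the Poincar\'e duals of the sub-hyperplanes $[\Lc]\subset[\Lc']$, and because $[\Lc']$ is a torus these generate $\Hs^1([\Lc'];\Oc_{[\Lc']})$ --- this is exactly \ref{ss:Vc1} applied to the pseudotoric arrangement $\Ac|_{[\Lc']}$ inside $[\Lc']$. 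The reciprocity laws that $\eta$ satisfies descend, via $\iota^{*}$ and passage to cohomology, to $\Lambda_{j-2}$-relations on the target of $g$; hence the element $-([(\iota^{*}\eta)_{\Lc'}])_{\Lc'}$ satisfies these same $\Lambda_{j-2}$-relations, and \ref{ss:abstract} produces $\zeta\in\Fc_{j-1}$ with $g(\zeta)=-([(\iota^{*}\eta)_{\Lc'}])_{\Lc'}$ satisfying the reciprocity laws. For this $\zeta$, every component of $\iota^{*}(\eta+\zeta)$ is a coboundary, i.e. $\eta+\zeta$ satisfies the period vanishing conditions. The delicate point --- where the real work lies --- is verifying that the reciprocity laws for $\eta$ genuinely descend to honest $\Lambda_{j-2}$-relations compatible with the grading under which $g$ is diagonal, so that \ref{ss:abstract} is applicable; the surjectivity of $g$ is then a routine flat-by-flat reduction to \ref{ss:Vc1}.
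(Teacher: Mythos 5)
Your argument is essentially the paper's own proof of this theorem: one inclusion from \ref{t:recip} and \ref{ss:pervan}, and for the converse the same integration step via $\Is_\phi$ (\ref{ss:integr}, \ref{ss:intrec}) followed by the same cohomological repair of the period vanishing conditions, using surjectivity onto $\Hs^1$ flat-by-flat (\ref{ss:Vc1}) and the abstract lemma \ref{ss:abstract} to keep the reciprocity laws, iterated down to $E^0$. Your additive correction $\eta+\zeta$ with $\zeta\in\Fc_{j-1}$ is the same operation as the paper's subtraction of a constant-on-flats cocycle from $\iota^*\eta$, and your explicit induction just formalizes the paper's ``further integration steps are similar''; the compatibility point you flag at the end is treated at the same level of brevity in the paper itself.
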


\begin{proof}
Given an element $\xi\in\Fc_k$ that satisfies the reciprocity laws 
 and the period vanishing conditions, we need to be able to ``integrate'' $\xi$
 as many times (namely, $k$) as needed to produce an element of $E^0$.
The first integration is possible due to the period vanishing conditions: there is some $\eta\in E^{k-1}$ with $\xi=D\eta$.
Moreover, one can assume that $\eta$ satisfies all the reciprocity laws of types $(m,k-1)$ with $m<k-1$,
 as well as the cocycle condition, by \ref{ss:intrec}.
A problem, however, is that $\eta$ does not need to satisfy the period vanishing conditions.

Consider $\zeta:=\iota^{*}\eta\in C^{k-1}_{(k-2)}$.
For any flag $\Lc\in\Lambda_{k-2}$, let $\zeta_\Lc$ be the $\Lc$-component of $\zeta$;
 it is a cocycle in $C^{k-1}_\Lc$ identified with a 1-cocycle in $C^1([\Lc];\Oc_{[\Lc]})$.
Write $[\zeta_\Lc]\in\Hs^1([\Lc];\Oc_{[\Lc]})$ for the cohomology class of $\zeta_\Lc$.
By \ref{ss:Vc1}, there is a 1-cocycle $\zeta'_\Lc$ constant on all dimension $k-1$
 flats in $[\Lc]$ and representing the same cohomology class $[\zeta_\Lc]$.
The element $\zeta'':=\bigoplus_\Lc (\zeta_\Lc-\zeta'_\Lc)$ now satisfies the period vanishing condition.
On the other hand, the reciprocity laws may fail for this element,
 for ``wrong'' choices of $\zeta'_\Lc$.
A ``right'' choice, satisfying the relevant reciprocity laws, can be made by \ref{ss:abstract}.
Setting $\eta':=\iota^{*-1}\zeta''$, we obtain an element $\eta'\in E^{k-1}$
 with $\xi=D\eta'$ such that $\eta'$ satisfies the period vanishing conditions and the
 reciprocity laws of all types $(m,k-1)$ with $0<m<k-1$.
Further integration steps are similar.

As a result of the integration process just described, one obtains an element $f\in\Vc$
 with $\xi=D^kf$, which means $\xi\in\Vc_k$, as desired.
\end{proof}

Note that an alternative description of $\Vc_n$ similar to \ref{ss:2nddesc}
 is possible in the pseudotoric case; this description considers
 all flags of $\Lambda_n$ except those with $\{o\}$ as the base flat,
 and it allows to go without the period vanishing conditions.
Since the period vanishing conditions are still needed for $k<n$, this alternative
 description is less useful than in the pseudoaffine case.

\subsection{Proof of the main theorem}
\label{ss:proof-main}
We now let $\Ac$ be an arbitrary collection of rational hyperplanes in $\T^n$
 containing at least one $n$-tuple of hyperplanes with zero-dimensional intersection.
Notation $\Vc_k$, $\Fc_k$, etc., now refers to this, possibly infinite, collection $\Ac$.
Since every element of $\Vc_k$ is defined by finitely many hyperplanes from $\Ac$,
 the description of the embedding $\Vc_k\subset \Fc_k$ looks the same as in \ref{ss:tork<n}.
In other words, we obtain the theorem of \ref{ss:descVck}.
By the results of Section \ref{s:linalg}, this implies the main theorem.


\begin{thebibliography}{10}

\bibitem{BM96}
J.-L. Brylinski and D.~A. McLaughlin.
\newblock Multidimensional reciprocity laws.
\newblock {\em Journal f{\"u}r die reine und angewandte {M}athematik},
  475:125--148, 1996.

\bibitem{CDD+20}
F.~Callegaro, M.~D’Adderio, E.~Delucchi, L.~Migliorini, and R.~Pagaria.
\newblock Orlik--{S}olomon type presentations for the cohomology algebra of
  toric arrangements.
\newblock {\em Transactions of the American Mathematical Society},
  373(3):1909--1940, 2020.

\bibitem{dCP05}
C.~De~Concini and C.~Procesi.
\newblock On the geometry of toric arrangements.
\newblock {\em Transformation Groups}, 10:387--422, 2005.

\bibitem{Deh01}
M.~Dehn.
\newblock {\"{U}}eber den {R}auminhalt.
\newblock {\em Mathematische Annalen}, 55(3):465--478, 1901.

\bibitem{Dup82}
J.~L. Dupont.
\newblock Algebra of polytopes and homology of flag complexes.
\newblock {\em Osaka J. Math.}, 19(1):599--641, 1982.

\bibitem{Dup01}
J.~L. Dupont.
\newblock {\em Scissors Congruences, Group Homology \& Characteristic Classes},
  volume~1.
\newblock World Scientific, 2001.

\bibitem{FS97}
W.~Fulton and B.~Sturmfels.
\newblock Intersection theory on toric varieties.
\newblock {\em Topology}, 36(2):335--353, 1997.

\bibitem{Gro78}
H.~Groemer.
\newblock On the extension of additive functionals on classes of convex sets.
\newblock {\em Pacific Journal of Mathematics}, 75(2):397--410, 1978.

\bibitem{Had13}
H.~Hadwiger.
\newblock {\em Vorlesungen {\"u}ber {I}nhalt, {O}berfl{\"a}che und
  {I}soperimetrie}.
\newblock Springer-Verlag, 2013.

\bibitem{HG51}
H.~Hadwiger and P.~Glur.
\newblock Zerlegungsgleichheit ebener {P}olygone.
\newblock {\em Elemente der Mathematik}, 6:97--106, 1951.

\bibitem{Hat02}
A.~Hatcher.
\newblock {\em Algebraic Topology}.
\newblock Cambridge University Press, 2002.

\bibitem{JT78}
B.~Jessen and A.~Thorup.
\newblock The algebra of polytopes in affine spaces.
\newblock {\em Mathematica Scandinavica}, 43(2):211--240, 1978.

\bibitem{Jes68}
B{\o}rge Jessen.
\newblock The algebra of polyhedra and the {D}ehn--{S}ydler theorem.
\newblock {\em Mathematica Scandinavica}, 22(2):241--256, 1968.

\bibitem{Kho08}
A.~Khovanskii.
\newblock Logarithmic functional and reciprocity laws.
\newblock {\em Contemporary Mathematics}, 460:221--230, 2008.

\bibitem{KP16}
K.~Kusejko and L.~Parapatits.
\newblock A valuation-theoretic approach to translative-equidecomposability.
\newblock {\em Advances in Mathematics}, 297:174--195, 2016.

\bibitem{Lom82}
V.~Lomadze.
\newblock On residues in algebraic geometry.
\newblock {\em Mathematics of The {USSR}-izvestiya}, 19:495--520, 1982.

\bibitem{McM78}
P.~McMullen.
\newblock Lattice invariant valuations on rational polytopes.
\newblock {\em Archiv der Mathematik}, 31(1):509--516, 1978.

\bibitem{Mc89}
P.~McMullen.
\newblock The polytope algebra.
\newblock {\em Advances in Mathematics}, 78(1):76--130, 1989.

\bibitem{McM93}
P.~McMullen.
\newblock Separation in the polytope algebra.
\newblock {\em Contributions to Algebra and Geometry}, 34(1):15--30, 1993.

\bibitem{McM09}
P.~McMullen.
\newblock Valuations on lattice polytopes.
\newblock {\em Advances in Mathematics}, 220(1):303--323, 2009.

\bibitem{MS83}
P.~McMullen and R.~Schneider.
\newblock Valuations on convex bodies.
\newblock In {\em Convexity and its applications}, pages 170--247. Springer,
  1983.

\bibitem{Mor93}
R.~Morelli.
\newblock Translation scissors congruence.
\newblock {\em Advances in Mathematics}, 100(1):1--27, 1993.

\bibitem{Par76}
A.~N. Parshin.
\newblock On the arithmetic of two-dimensional schemes. {I.} {D}istributions
  and residues.
\newblock {\em Mathematics of the USSR-Izvestiya}, 10(4):695, aug 1976.

\bibitem{PS70}
M.~A. Perles and G.~T. Sallee.
\newblock Cell complexes, valuations, and the euler relation.
\newblock {\em Canadian Journal of Mathematics}, 22(2):235--241, 1970.

\bibitem{PKh92a}
A.~V. Pukhlikov and A.~G. Khovanski\u{\i}.
\newblock Finitely additive measures of virtual polyhedra.
\newblock {\em Algebra i Analiz}, 4(2):161--185, 1992.

\bibitem{Sah79}
C.~Sah.
\newblock {\em Hilbert's Third Problem: Scissors Congruence}.
\newblock Research notes in mathematics. Pitman, 1979.

\bibitem{SV91}
V.~V. Schechtman and A.~N. Varchenko.
\newblock Arrangements of hyperplanes and {L}ie algebra homology.
\newblock {\em Inventiones mathematicae}, 106(1):139--194, 1991.

\bibitem{Var87}
A.~N. Varchenko.
\newblock Combinatorics and topology of the arrangement of affine hyperplanes
  in the real space.
\newblock {\em Funktsional. Anal. i Prilozhen}, 21(1):11--22, 1987.

\bibitem{VG87}
A.~N. Varchenko and I.~M. Gel'fand.
\newblock Heaviside functions of a configuration of hyperplanes.
\newblock {\em Functional Analysis and its Applications}, 21(4):255--270, 1987.

\bibitem{Vol57}
W.~Volland.
\newblock Ein {F}ortsetzungssatz f{\"u}r additive {E}ipolyederfunktionale im
  euklidischen {R}aum.
\newblock {\em Archiv der Mathematik}, 8(2):144--149, 1957.

\bibitem{Zak16}
I.~Zakharevich.
\newblock Perspectives on scissors congruence.
\newblock {\em Bulletin of the American Mathematical Society}, 53(2):269--294,
  2016.

\bibitem{Zyl65}
V.~B. Zylev.
\newblock On the equi-dissectability of two equi-completable polytopes.
\newblock {\em Dokl. Akad. Nauk SSSR}, 161(3):515--516, 1965.

\end{thebibliography}

\end{document}